\documentclass[11pt, reqno]{amsart}

\usepackage{amsmath, amssymb, amsthm, amsfonts, blindtext, cite, dsfont, enumerate, epsfig, float, hyperref,
infwarerr, lipsum, lmodern, marvosym, mathrsfs, mathtools, mathrsfs, mathtools, microtype, relsize,
soul, stmaryrd, stackengine, scalerel, tabularx, tikz, txfonts}
\usepackage[normalem]{ulem}

\microtypesetup{protrusion=true, expansion=true}

\newtheorem{theorem}{Theorem}
\newtheorem{definition}{Definition}
\newtheorem{remark}{Remark}

\newtheorem{corollary}{Corollary}
\newtheorem{proposition}{Proposition}
\newtheorem{lemma}{Lemma}

\DeclareMathOperator{\Entr}{H}

\allowdisplaybreaks

\begin{document}

\title{The Lov\'{a}sz Local Lemma: \\ Foundations and Applications}

\author{Igal Sason}

\begin{abstract}
The Lov\'{a}sz Local Lemma (LLL) is a central tool in probabilistic combinatorics, providing
a sufficient condition under which a finite collection of undesirable events with limited
dependencies can be simultaneously avoided with positive probability. This paper offers a
self-contained expository treatment of the lemma and its strengthened versions, emphasizing
mathematical foundations, conceptual clarity, and applications. We begin with a pedagogically
motivated proof of the LLL based entirely on unconditional probability inequalities.
Particular attention is given to the symmetric form of the lemma and several subsequent
strengthenings. We also discuss a variety of classical applications of both the
symmetric and asymmetric forms of the LLL in combinatorics and graph theory, including bounds
for the edge-disjoint paths problem, satisfiability of Boolean formulas in conjunctive normal
form, lower bounds on diagonal and off-diagonal Ramsey numbers, hypergraph coloring results,
structural properties of directed graphs, and acyclic graph colorings. Additional observations
and refinements are provided throughout. We also introduce the algorithmic framework of Moser
and Tardos, highlighting its constructive counterpart to the LLL, together with an introduction
to the entropy-compression principle. The lopsided LLL, a refinement of the LLL, is presented
along with an application to the Latin transversal problem. We further discuss the cluster-expansion
lemma and its relation to the LLL, and present an alternative treatment of the Latin transversal
problem from the cluster-expansion perspective that yields an improved result. The exposition 
concludes with a high-level overview of the iterated LLL, also known as the semi-random method.
\end{abstract}

\keywords{Lov\'{a}sz Local Lemma; Moser--Tardos algorithm; graph and hypergraph coloring;
probabilistic combinatorics.
\newline \vspace*{0.1cm}
\textbf{Mathematics Subject Classification:} 05C35, 05C50, 05C69, 05C76, 94A15.
\newline
I. Sason is with the Andrew and Erna Viterbi Faculty of Electrical and Computer Engineering and the Department of Mathematics,
both at the Technion --- Israel Institute of Technology, Haifa 3200003, Israel. Email: \tt{eeigal@technion.ac.il.}}

\maketitle
\markboth{The Lov\'{a}sz Local Lemma: Foundations and Applications}{Igal Sason}

\thispagestyle{empty}

\section{Introduction}
\label{section: introduction}

In many applications of the probabilistic method, one considers a finite
collection of undesirable events $\{A_i\}_{i=1}^n$ and seeks to show that,
with positive probability, none of them occurs. If the sum of their probabilities
is strictly less than one, then the union bound guarantees this. If, instead,
the events are independent and each occurs with probability strictly less than
one, then the probability that none of the events occurs equals the product of
the probabilities of their complements, which is positive.

In typical combinatorial settings, however, the events exhibit significant
dependencies and their probabilities are often not sufficiently small for
direct probabilistic estimates to be effective. Nevertheless, in many
applications, each event depends on only a limited number of others, and the
event probabilities are sufficiently small compared to the reciprocal of the
maximum number of events on which any given event depends. This observation
underlies a fundamental result, namely the \emph{Lov\'{a}sz Local Lemma} (LLL),
introduced by Erd\H{o}s and Lov\'{a}sz \cite{LLL75}. The lemma provides a
sufficient condition under which a finite collection of events with limited
dependencies can be simultaneously avoided with positive probability.

The LLL and its variants have become indispensable tools in probabilistic
combinatorics and theoretical computer science. Over the past several decades,
they have found applications across a broad spectrum of areas, including
probabilistic and extremal combinatorics (see, e.g.,
\cite{LLL75, AlonSpencer2016, AlonMM1996, Bollobas2001, Jukna11,
MolloyR2002, Spencer75, Spencer77, AlonL89, Alon91b, AlonMR91,
KirtisogluO2024, LintW2001, LiLin22, MitzenmacherU17, Knuth22,
FurediK86, Bremaud17, HaeuplerSS11, KolipakaSX12, DengSW04, ChangHLPU2020,
BroderFU94, Vondrak-LN}), satisfiability and computational
complexity \cite{AmbainisKS2012, KratochvilST93, BroderFU94,
HaeuplerSS11, GebauerMSW09, Gebauer2012, GebauerST2016, Moitra19,
Livieratos2020, ChenLWYY2025}, information theory, coding theory, 
communication networks, and group testing
\cite{Schulman93, AlonBEGH16, Gelles17, KeevashKu2006, ConST23, ConST25,
LeightonMR94, LeightonMA99, Huang2025, ChengHLSW19, ArandaF17,
FernandezKMT23, FernandezLM23, FernandezLM25, FernandezMGSLG26,
Yeh2002, DalaiFRV2025, RescignoV2023, RescignoV2024, GarganoRV2020}, 
and statistical physics \cite{ScottS2005, ScottS2006, BissacotFPS2011}. 
For surveys and expository accounts, see \cite{Szegedy2013, Farago21,
GrotschelN2024, Molloy98, Beck2013, Kahn2013, Trotter2013}.

In its classical form, the LLL is \emph{non-constructive}:
it guarantees the existence of an assignment that avoids all the undesirable events, yet
does not, in general, provide an explicit method for finding such an assignment. To describe
the dependence structure among events underlying the LLL, we introduce the following notions.
Throughout, $[n] := \{1,\ldots,n\}$ for $n \in \mathbb{N}$, with $[0] := \varnothing$.

\begin{definition}[Mutual independence]
\label{definition: Mutual independence}
{\em Let $(\Omega,\mathcal{F},\mathbb{P})$ be a probability space, $n \in \mathbb{N}$,
and $A_1, \ldots, A_n \in \mathcal{F}$.
For $i \in [n]$, the event $A_i$ is independent of the $\sigma$-algebra
$\sigma\bigl(\{A_j: j \in [n] \setminus \{i\}\}\bigr)$ if
\begin{align}
\mathbb{P}(A_i \cap B) = \mathbb{P}(A_i)\,\mathbb{P}(B)
\qquad \text{for all } B \in \sigma(A_1,\ldots,A_{i-1},A_{i+1},\ldots,A_n).
\end{align}
If this holds for all $i \in [n]$, then the events $A_1, \ldots, A_n$ are said
to be \emph{mutually independent}.}
\end{definition}

\begin{remark}
\label{remark: sigma-algebra}
{\em For $i \in [n]$, $\sigma\bigl(\{A_j : j \in [n]\setminus\{i\}\}\bigr)$ is called the
\emph{$\sigma$-algebra generated} by the events $A_1, \ldots, A_{i-1}, A_{i+1}, \ldots, A_n$,
and it is defined as the smallest $\sigma$-algebra containing these events.
Since the family $A_1,\ldots,A_{i-1},A_{i+1},\ldots,A_n$ is finite, the $\sigma$-algebra it generates
consists precisely of all events that are obtained from these events by finitely many applications of
unions, intersections, and complements.}
\end{remark}

\begin{definition}[Dependency digraph]
\label{definition: Dependency digraph}
{\em Let $\{A_i\}_{i=1}^n$ be events in a probability space. A directed
graph (digraph) $D = ([n], E)$ is a \emph{dependency digraph}
for $\{A_i\}_{i=1}^n$ if, for all $i \in [n]$, the event $A_i$ is independent
of the $\sigma$-algebra generated by $\{A_j: j \neq i, \, (i,j) \notin E\}$,
where $(i,j)$ denotes an arc from $i$ to $j$.}
\end{definition}

\begin{remark}
\label{remark: mutual independence}
{\em Definition~\ref{definition: Dependency digraph} requires more than pairwise
independence: for each $i \in [n]$, the event $A_i$ must be independent of
the $\sigma$-algebra generated by the events $\{A_j: j \neq i, \, (i,j)\notin E\}$.}
\end{remark}

\begin{remark}[Non-uniqueness of the dependency digraph]
\label{remark: non-unique dependency digraph}
{\em In general, the dependency digraph for the events $\{A_i\}_{i=1}^n$ is not unique.
Indeed, if $D=([n],E)$ is a dependency digraph, then any digraph obtained by adding
arcs to $E$ is also valid, since enlarging $E$ weakens the independence requirement.}
\end{remark}

\begin{theorem}[Lov\'{a}sz Local Lemma (LLL)]
\label{theorem: LLL}
{\em Let $A_1, \ldots, A_n$ be events in a probability space $(\Omega,\mathcal{F},\mathbb{P})$,
and let $D = ([n], E)$ be a dependency digraph for these events. If there exist
$x_1, \ldots, x_n \in [0,1)$ such that
\begin{align}
\label{eq1: condition}
\mathbb{P}(A_i) \leq x_i \prod_{j: (i,j) \in E} (1-x_j), \qquad \forall \, i \in [n],
\end{align}
then
\begin{align}
\label{eq1: LLL}
\mathbb{P}\left( \bigcap_{i=1}^n \overline{A_i} \right) \geq \prod_{i=1}^n (1-x_i).
\end{align}
In particular, the probability that none of the events $\{A_i\}_{i=1}^n$ occurs is positive.}
\end{theorem}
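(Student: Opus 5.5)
The plan is the standard induction, phrased through the key claim that for every $S \subseteq [n]$ and every $i \notin S$ with $\mathbb{P}\bigl(\bigcap_{j\in S}\overline{A_j}\bigr)>0$,
\begin{align}
\label{eq: key claim proposal}
\mathbb{P}\Bigl(A_i \,\Big|\, \bigcap_{j\in S}\overline{A_j}\Bigr) \leq x_i .
\end{align}
Since the abstract promises a proof using only unconditional probabilities, I would instead state it in the equivalent multiplied-out form
\begin{align}
\mathbb{P}\Bigl(A_i \cap \bigcap_{j\in S}\overline{A_j}\Bigr) \leq x_i \, \mathbb{P}\Bigl(\bigcap_{j\in S}\overline{A_j}\Bigr).
\end{align}
This form also avoids any worry about conditioning on null events. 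I would prove it, together with positivity $\mathbb{P}\bigl(\bigcap_{j\in S}\overline{A_j}\bigr) \geq \prod_{j\in S}(1-x_j)$, by strong induction on $|S|$.

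Given the claim, \eqref{eq1: LLL} follows by the chain rule. With $B_k := \bigcap_{j\le k}\overline{A_j}$ we have $\mathbb{P}(B_k) = \mathbb{P}(B_{k-1}) - \mathbb{P}(A_k\cap B_{k-1})$. The claim gives $\mathbb{P}(A_k\cap B_{k-1}) \le x_k\,\mathbb{P}(B_{k-1})$, hence $\mathbb{P}(B_k) \ge (1-x_k)\mathbb{P}(B_{k-1})$. Iterating from $B_0=\Omega$ gives the product bound. The same telescoping, applied to any $S$, gives the positivity statement inside the induction, so the two assertions should be proved jointly.

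For the inductive step, fix $i\notin S$ and split $S = S_1 \cup S_2$. Here $S_1 = \{j\in S: (i,j)\in E\}$ and $S_2 = S\setminus S_1$. Write $C_1 = \bigcap_{j\in S_1}\overline{A_j}$ and $C_2 = \bigcap_{j\in S_2}\overline{A_j}$. If $S_1=\varnothing$, independence of $A_i$ from $\sigma(\{A_j: j\in S_2\})$ together with \eqref{eq1: condition} gives $\mathbb{P}(A_i\cap C_2)=\mathbb{P}(A_i)\mathbb{P}(C_2)\le x_i\mathbb{P}(C_2)$ directly. Otherwise I would bound the numerator and denominator separately.

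\begin{itemize}
\item \emph{Numerator.} $\mathbb{P}(A_i\cap C_1\cap C_2) \le \mathbb{P}(A_i\cap C_2) = \mathbb{P}(A_i)\mathbb{P}(C_2)$, using Definition~\ref{definition: Dependency digraph} since $C_2$ lies in the relevant $\sigma$-algebra. This is at most $x_i\prod_{j:(i,j)\in E}(1-x_j)\,\mathbb{P}(C_2)$.
\item \emph{Denominator.} Order $S_1=\{j_1,\dots,j_r\}$ and telescope:
\begin{align}
\mathbb{P}(C_1\cap C_2) = \mathbb{P}(C_2)\prod_{t=1}^r \frac{\mathbb{P}\bigl(\overline{A_{j_t}}\cap C_2\cap \bigcap_{s<t}\overline{A_{j_s}}\bigr)}{\mathbb{P}\bigl(C_2\cap\bigcap_{s<t}\overline{A_{j_s}}\bigr)}.
\end{align}
Each factor is $1 - (\text{probability of } A_{j_t} \text{ given a conjunction over a set of size} <|S|)$. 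By the induction hypothesis each factor is at least $1-x_{j_t}$, and positivity of the denominators also comes from the induction. Hence $\mathbb{P}(C_1\cap C_2)\ge \mathbb{P}(C_2)\prod_{j\in S_1}(1-x_j)$, which is at least $\mathbb{P}(C_2)\prod_{j:(i,j)\in E}(1-x_j)$ because $S_1$ is a subset of the out-neighbourhood and each $1-x_j\le 1$.
\end{itemize}
Combining the two bounds yields $\mathbb{P}(A_i\cap C_1\cap C_2)\le x_i\,\mathbb{P}(C_1\cap C_2)$. To stay fully unconditional, I would write the telescoping factors as the multiplied-out inequalities $\mathbb{P}(\overline{A_j}\cap T)\ge(1-x_j)\mathbb{P}(T)$.

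The main obstacle is organizing the induction so that it is well-founded. The sets used in the telescoping, $S_2\cup\{j_s: s<t\}$, must have size strictly less than $|S|$, and this holds precisely because $S_1\neq\varnothing$. The positivity of all intermediate intersections also has to be carried along, which is why I would prove the joint statement ``claim plus $\mathbb{P}\bigl(\bigcap_{j\in T}\overline{A_j}\bigr)\ge\prod_{j\in T}(1-x_j)$ for all $|T|\le m$'' by induction on $m$. Finally, positivity of the conclusion uses $x_i<1$ for all $i$.
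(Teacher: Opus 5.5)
Your proposal is correct and is essentially the paper's proof. The paper likewise proves the unconditional inequality $\mathbb{P}(A_i\cap F(S))\le x_i\,\mathbb{P}(F(S))$ by induction on $|S|$ via the split $S=S_1\cup S_2$, bounds the numerator using $F(S)\subseteq F(S_2)$ and independence, bounds the denominator by telescoping $\mathbb{P}(G_t)\ge(1-x_{j_t})\,\mathbb{P}(G_{t-1})$ with the induction hypothesis on sets of size $<|S|$, and concludes by the same chain-rule iteration. The only difference is that carrying positivity of $\mathbb{P}\bigl(\bigcap_{j\in T}\overline{A_j}\bigr)$ jointly through the induction is unnecessary once everything is written in multiplied-out form, which is exactly what the paper's unconditional formulation is designed to avoid.
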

To illustrate the power of the LLL beyond the settings covered by the union bound and independence,
suppose there exists an absolute constant $c < \tfrac12$ such that, for all $i \in [n]$,
\[
\mathbb{P}(A_i) \le c, \qquad \sum_{j: (i,j) \in E} \mathbb{P}(A_j) \le \tfrac14.
\]
Setting $x_i := 2 \, \mathbb{P}(A_i)$ yields $x_i \in [0,1)$, and
\[
\prod_{j: (i,j) \in E} (1-x_j)
\geq 1 - \sum_{j: (i,j) \in E} x_j
= 1 - 2 \sum_{j: (i,j) \in E} \mathbb{P}(A_j)
\geq \tfrac12.
\]
Consequently,
\[
\mathbb{P}(A_i) = \frac{x_i}{2} \le x_i \prod_{j: (i,j) \in E} (1-x_j), \quad \forall\, i \in [n].
\]
By the LLL (Theorem~\ref{theorem: LLL}), since $0 \leq x_i \leq 2c < 1$ for all $i \in [n]$,
it follows that
\begin{align*}
\mathbb{P}\left( \bigcap_{i=1}^n \overline{A_i} \right) & \geq \prod_{i=1}^n (1-x_i) \\
& \geq (1-2c)^n,
\end{align*}
yielding an explicit positive lower bound on the probability that no event in
$\{A_i\}_{i=1}^n$ occurs, with the lower bound decaying exponentially in~$n$.
This extends a statement in \cite[p.~12]{Molloy98}, restricted to the case where
$c = \tfrac18$, where the positivity of this probability is noted.

We briefly review several major algorithmic and structural developments related to the LLL.
Over the years, a variety of important extensions and refinements of the lemma have
been developed, many of which address its historically non-constructive nature.
A central line of research, initiated in 1991 by Beck \cite{Beck91} and
Alon \cite{Alon91}, concerns \emph{algorithmic versions} of the lemma. In this
context, the \emph{variable version of the LLL}, where events are
modeled as functions of independent random variables, plays a fundamental role. A landmark
breakthrough in this direction is due to Moser and Tardos \cite{MoserTardos10}, building
on the earlier breakthrough of Moser \cite{Moser09}, who introduced an efficient
randomized resampling algorithm for avoiding all bad events in this variable
setting. Subsequently, Kolipaka and Szegedy \cite{KolipakaS11} showed that the
Moser--Tardos algorithm remains efficient throughout the full regime characterized by
Shearer’s bound \cite{Shearer85}.

This framework has also led to a refined understanding of dependency structures,
including sharp characterizations of feasibility boundaries. Recent developments have
further revealed gaps between the variable setting and the abstract dependency-graph
formulation of the LLL \cite{HeLS23,HeLLWX26}. Moreover, \cite{HeLLWX26}
establishes improved efficiency guarantees in the variable setting, thereby further
separating it from the abstract LLL framework, for which Shearer’s bound is optimal
under dependency-graph formulations.

The analysis of these algorithms is closely related to the
\emph{entropy-compression method}, which provides a unifying framework for bounding
the probability of long resampling sequences and has evolved into a powerful
and widely used technique in probabilistic combinatorics
\cite{Tao2009,DujmovicJKW16,EsperetP13}. Subsequent works have further extended and
refined this framework; see, e.g., the generalizations of the Moser--Tardos algorithm by
Pegden \cite{Pegden14} and by Harvey and Vondr\'{a}k \cite{HarveyV20}. Deterministic
algorithms for the LLL have also been developed \cite{ChandrasekaranGH13,Harris23}.
Another important direction concerns lopsided variants of the LLL, which replace the 
classical mutual independence assumptions by weaker asymmetric dependency conditions 
\cite{ErdosSpencer91}. Several strengthenings of the LLL, including the lopsided LLL 
and the Cluster-Expansion Lemma, have proved successful in applications where the 
standard LLL does not apply directly (see, e.g., \cite{BissacotFPS2011, MolloyR2002}).

\smallskip 
This paper offers a self-contained expository treatment of the LLL and its strengthenings,
emphasizing mathematical foundations and applications and providing observations and 
refinements throughout. Its main features are as follows:
\begin{enumerate}
\item A reformulated proof of the LLL (Section~\ref{section: LLL and modified proof}), together with a pedagogical motivation
in Remark~\ref{remark: avoiding conditional probabilities}.

\item A treatment of the symmetric version of the LLL (Section~\ref{section: symmetric version}), including strengthened
guarantees on the probability of avoiding all bad events (Corollaries~\ref{corollary: 1/2} and~\ref{corollary: Knuth},
followed by Remark~\ref{remark: comparison of 2 corollaries}).

\item A presentation of classical combinatorial applications of the symmetric and asymmetric LLL (Section~\ref{section: applications}),
together with several observations and refinements, including:
\begin{enumerate}
\item Tightened results in Theorems~\ref{theorem: Edge-disjoint paths} and
\ref{theorem: k-SAT formula}, concerning the problems of edge-disjoint paths
and satisfiability of Boolean formulas, respectively.

\item A slight tightening of Spencer's lower bound on diagonal Ramsey numbers
(Theorem~\ref{theorem: LB R(k,k) - ver4}), together with a refined asymptotic
formulation (Proposition~\ref{prop: ramsey-asymptotic lb} and Remark~\ref{remark: k_0}).
A tightened lower bound on off-diagonal Ramsey numbers, building on Spencer's
application of the (asymmetric) LLL, is also presented (Theorem~\ref{theorem: IS - off-diagonal Ramsey}).

\item Strengthened results on hypergraph colorings
(Theorems~\ref{theorem1: Erdos-Lovasz 75},
\ref{theorem2: Erdos-Lovasz 75},
\ref{theorem3: sharpened Erdos-Lovasz 75},
and Remark~\ref{remark: sharpened Erdos-Lovasz 75}).

\item Strengthened bounds on cycle lengths in digraphs
(Corollaries~\ref{cor: even-length directed cycles},
\ref{cor: cycles divisible by k in regular digraphs}, and  
Remark~\ref{remark: strengthened result - 12.06.26}).

\item An application of the classical LLL from \cite{AlonMR91} leads to a derivation
of a tightened upper bound on the acyclic chromatic number of a graph
(Proposition~\ref{proposition: AlonMR91 tightening}).
\end{enumerate}

\item A presentation of the Moser--Tardos algorithm~\cite{MoserTardos10} for the
constructive proof of the LLL in the variable setting (Section~\ref{section: Moser--Tardos Algorithm}),
and an introduction to the entropy-compression principle (Section~\ref{section: entropy-compression principle}).

\item A presentation of the lopsided LLL, together with its application to the existence of Latin transversals in matrices (Section~\ref{section: Lopsided LLL}).

\item A presentation of the Cluster-Expansion Lemma, its implications, and an alternative treatment of the Latin transversal
problem by that lemma, which yields an improved result (Section~\ref{section: cluster-expansion lemma}).

\item A presentation of the iterated LLL (Section~\ref{section: iterated LLL}), and a brief outlook on open directions (Section~\ref{section: outlook}).
\end{enumerate}

\section{A reformulated proof of the Lov\'{a}sz Local Lemma}
\label{section: LLL and modified proof}

This section presents a reformulated proof of the LLL in Theorem~\ref{theorem: LLL}, followed by its pedagogical
motivation in Remark~\ref{remark: avoiding conditional probabilities}.

\begin{proof}
We present a proof based entirely on unconditional probability inequalities.
In particular, no step requires assuming that a conditioning event has positive probability.

\medskip
\noindent\textbf{Step 1: An auxiliary lemma.}
For $S \subseteq [n]$, define
\begin{align}
\label{eq: F(S)}
F(S) := \bigcap_{j\in S} \, \overline{A_j}.
\end{align}

\begin{lemma}
\label{lemma: LLL-unconditional}
{\em Assume that the conditions of Theorem~\ref{theorem: LLL} hold. Then, for every
$S \subset [n]$ and $i \in [n] \setminus S$,
\begin{align}
\label{eq: LLL-unconditional}
\mathbb{P}\bigl(A_i \cap F(S)\bigr) \leq x_i\,\mathbb{P}\bigl(F(S)\bigr).
\end{align}}
\end{lemma}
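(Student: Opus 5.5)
We argue by strong induction on $|S|$, proving the inequality $\mathbb{P}(A_i \cap F(S)) \le x_i\,\mathbb{P}(F(S))$ for all $S$ of a given size and all $i \notin S$ simultaneously. The base case $S=\varnothing$ is immediate: $F(\varnothing)=\Omega$, so the claim reads $\mathbb{P}(A_i)\le x_i$. This follows from \eqref{eq1: condition}, since every factor $1-x_j$ lies in $(0,1]$.

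For the inductive step, fix $S$ and $i\notin S$. Split $S$ into $S_1 := \{j\in S : (i,j)\in E\}$ and $S_2 := S\setminus S_1$. If $S_1=\varnothing$, then $F(S)=F(S_2)$ lies in the $\sigma$-algebra generated by $\{A_j: j\neq i,\ (i,j)\notin E\}$. Definition~\ref{definition: Dependency digraph} then gives $\mathbb{P}(A_i\cap F(S))=\mathbb{P}(A_i)\,\mathbb{P}(F(S))\le x_i\,\mathbb{P}(F(S))$.

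Otherwise, write $S_1=\{j_1,\dots,j_r\}$ with $r\ge1$. Two bounds combine.

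\emph{Upper bound.} Dropping the constraints indexed by $S_1$ and using independence of $A_i$ from $F(S_2)$ gives
\begin{align*}
\mathbb{P}(A_i\cap F(S)) \le \mathbb{P}(A_i\cap F(S_2)) = \mathbb{P}(A_i)\,\mathbb{P}(F(S_2)) \le x_i \prod_{j:(i,j)\in E}(1-x_j)\,\mathbb{P}(F(S_2)).
\end{align*}

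\emph{Lower bound.} We show $\mathbb{P}(F(S)) \ge \prod_{k=1}^r (1-x_{j_k})\,\mathbb{P}(F(S_2))$ by peeling off the events one at a time. Set $T_k := S_2\cup\{j_1,\dots,j_{k-1}\}$. Then
\begin{align*}
\mathbb{P}(F(T_{k+1})) = \mathbb{P}(F(T_k)) - \mathbb{P}(A_{j_k}\cap F(T_k)) \ge (1-x_{j_k})\,\mathbb{P}(F(T_k)).
\end{align*}
The inequality uses the induction hypothesis applied to $T_k$ and $j_k\notin T_k$; this is legitimate because $|T_k| \le |S|-1 < |S|$. Telescoping from $k=1$ to $r$ gives the claimed lower bound.

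Combining the two bounds, and using $\prod_{j\in S_1}(1-x_j) \ge \prod_{j:(i,j)\in E}(1-x_j)$ (every factor is at most $1$ and $S_1$ is a subset of the out-neighbourhood of $i$), we obtain
\begin{align*}
\mathbb{P}(A_i\cap F(S)) \le x_i \prod_{j\in S_1}(1-x_j)\,\mathbb{P}(F(S_2)) \le x_i\,\mathbb{P}(F(S)).
\end{align*}
No division by $\mathbb{P}(F(\cdot))$ is ever needed, so the case of zero probability causes no trouble.

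The main obstacle is setting up the induction so that the hypothesis can be applied to the intermediate sets $T_k$ while staying entirely in the unconditional form. Inducting on $|S|$ over all pairs $(S,i)$ with $i\notin S$ handles this cleanly. The only other point to verify is that $F(S_2)$ belongs to the relevant $\sigma$-algebra, which is immediate from the definition of $S_2$ and Remark~\ref{remark: sigma-algebra}.
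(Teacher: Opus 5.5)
Your proposal is correct and follows essentially the same route as the paper. Both arguments use induction on $|S|$ and split $S$ into $S_1$ (the out-neighbours of $i$) and $S_2$, obtain the upper bound from $F(S)\subseteq F(S_2)$ and independence, and obtain the peeling lower bound $\mathbb{P}(F(S))\ge\prod_{j\in S_1}(1-x_j)\,\mathbb{P}(F(S_2))$ by applying the induction hypothesis to the intermediate sets $T_k$; the only difference is notation (your $F(T_k)$ versus the paper's events $G_t$).
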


\smallskip
\noindent\emph{Proof of Lemma~\ref{lemma: LLL-unconditional}.}
We prove \eqref{eq: LLL-unconditional} by induction on $|S|$.

\smallskip
\noindent\emph{Base case.}
If $S=\varnothing$, then $F(S)=\Omega$, and \eqref{eq: LLL-unconditional} reduces to
$\mathbb{P}(A_i)\leq x_i$. Indeed, by \eqref{eq1: condition}
\[
\mathbb{P}(A_i) \leq x_i \prod_{j:(i,j) \in E} (1-x_j) \leq x_i,
\]
where the last inequality holds since each factor satisfies $1-x_j \in [0,1]$.

\smallskip
\noindent\emph{Induction hypothesis.}
Fix an integer $m \geq 1$ and assume that \eqref{eq: LLL-unconditional} holds for every
$S' \subset [n]$ with $|S'| < m$ and every $i' \in [n] \setminus S'$.

\smallskip
\noindent\emph{Induction step.}
Let $S\subset[n]$ be an arbitrary set with $|S|=m$, and let $i \in [n] \setminus S$.
Define
\begin{align}
\label{eq: S1S2}
S_1 := \{j\in S : (i,j) \in E\}, \qquad S_2 := S \setminus S_1 .
\end{align}
If $S_1=\varnothing$, then $(i,j)\notin E$ and $j \neq i$ for all $j \in S$. By
Definition~\ref{definition: Dependency digraph}, the event $A_i$ is independent
of the $\sigma$-algebra generated by $\{A_j : j \in S\}$, and in particular it is
independent of $F(S)=\bigcap_{j \in S}\overline{A_j}$.
Therefore,
\[
\mathbb{P}\bigl(A_i \cap F(S)\bigr)
=\mathbb{P}(A_i) \, \mathbb{P}\bigl(F(S)\bigr)
\leq x_i \, \mathbb{P}\bigl(F(S)\bigr),
\]
and \eqref{eq: LLL-unconditional} follows.

Assume now that $S_1 \neq \varnothing$.
By \eqref{eq: S1S2}, $S_2 \subseteq \{j: (i,j)\notin E\}$. Since $i \notin S$ and $S_2 \subseteq S$,
we have $i \notin S_2$.
Therefore, $S_2 \subseteq \{j: j \neq i, \, (i,j)\notin E\}$. By Definition~\ref{definition: Dependency digraph},
$A_i$ is independent of the $\sigma$-algebra generated by $\{A_j : j \in S_2\}$; in particular,
$A_i$ is independent of $F(S_2)$.
As $S_2 \subseteq S$, we have $F(S)\subseteq F(S_2)$ so
\begin{align}
\mathbb{P}\bigl(A_i \cap F(S)\bigr)
&\leq \mathbb{P}\bigl(A_i \cap F(S_2)\bigr) \nonumber \\
&= \mathbb{P}(A_i)\;\mathbb{P}\bigl(F(S_2)\bigr) \nonumber \\
\label{eq: num-ub}
&\leq x_i \prod_{j:(i,j)\in E}(1-x_j) \; \mathbb{P}\bigl(F(S_2)\bigr),
\end{align}
where the last inequality holds by \eqref{eq1: condition}.
We next derive a lower bound on $\mathbb{P}(F(S))$ in terms of $\mathbb{P}(F(S_2))$.
Let $S_1=\{j_1,\dots,j_r\}$ and, for $t\in[r]$, let
\[
T_t := S_2\cup\{j_1,\dots,j_{t-1}\}.
\]
Note that $|T_t|=|S_2|+t-1\leq |S|-1$, hence $|T_t|<|S|$.
Define, for $t=0,1,\dots,r$, the events
\[
G_t := F(S_2)\cap \bigcap_{s=1}^t \overline{A_{j_s}},
\]
so that $G_0=F(S_2)$ and $G_r=F(S)$ (recall that $S = S_1 \, \dot{\cup} \, S_2$ is a
disjoint union of $S_1$ and $S_2$). Then, $G_t = \overline{A_{j_t}} \cap G_{t-1}$, and therefore
\begin{align}
\label{eq: G-recursion}
\mathbb{P}(G_t)=\mathbb{P}(G_{t-1})-\mathbb{P}(A_{j_t}\cap G_{t-1}),
\qquad t\in[r].
\end{align}
Since $G_{t-1}=F(T_t)$ and $j_t \notin T_t$, the induction hypothesis applied to the pair $(i',S')=(j_t,T_t)$ gives
\[
\mathbb{P}(A_{j_t}\cap G_{t-1})
=\mathbb{P}\bigl(A_{j_t}\cap F(T_t)\bigr)
\leq x_{j_t}\,\mathbb{P}\bigl(F(T_t)\bigr)
=x_{j_t}\,\mathbb{P}(G_{t-1}).
\]
Substituting into \eqref{eq: G-recursion} yields
\begin{align}
\mathbb{P}(G_t) \geq (1-x_{j_t})\,\mathbb{P}(G_{t-1}),
\qquad t\in[r].
\end{align}
Iterating for $t=1,2,\dots,r$, we obtain
\begin{align}
\label{eq: denom-lb}
\mathbb{P}\bigl(F(S)\bigr) = \mathbb{P}(G_r)
\geq \prod_{t=1}^r (1-x_{j_t})\;\mathbb{P}(G_0)
= \prod_{j \in S_1}(1-x_j) \; \mathbb{P}\bigl(F(S_2)\bigr).
\end{align}
Since $S_1 \subseteq \{j:(i,j)\in E\}$ and each factor $1-x_j$ lies in $[0,1]$, it follows that
\begin{align}
\label{eq: prod-compare}
\prod_{j\in S_1}(1-x_j) \geq \prod_{j:(i,j)\in E}(1-x_j).
\end{align}
Combining \eqref{eq: num-ub}, \eqref{eq: denom-lb}, and \eqref{eq: prod-compare} gives
\begin{align*}
\mathbb{P}\bigl(A_i\cap F(S)\bigr)
& \leq x_i \prod_{j:(i,j)\in E} (1-x_j)\;\mathbb{P}\bigl(F(S_2)\bigr) \\
& \leq x_i \prod_{j \in S_1} (1-x_j)\;\mathbb{P}\bigl(F(S_2)\bigr) \\
& \leq x_i\;\mathbb{P}\bigl(F(S)\bigr),
\end{align*}
which is \eqref{eq: LLL-unconditional}. This completes the induction and proves
Lemma~\ref{lemma: LLL-unconditional}.
\hfill $\blacksquare$

\medskip
\noindent
\textbf{Step 2: Concluding the proof of the LLL.}
For $k=0,1,\dots,n$, define
\begin{align}
\label{eq: F_k}
F_k := \bigcap_{j=1}^k \overline{A_j},
\quad \text{with  } F_0=\Omega.
\end{align}
Let $k \in [n]$ and $S=\{1,\dots,k-1\}$ (if $k=1$, then $S = \varnothing$).
By \eqref{eq: F(S)} and \eqref{eq: F_k}, we have $F(S)=F_{k-1}$.
Applying Lemma~\ref{lemma: LLL-unconditional} with this choice of $S$ and $i=k$
(noting that $i \in [n] \setminus S$), we obtain
\[
\mathbb{P}(A_k\cap F_{k-1}) \leq x_k\,\mathbb{P}(F_{k-1}),
\]
and therefore
\begin{align}
\label{eq: P(F_k)}
\mathbb{P}(F_k)
&=\mathbb{P}(F_{k-1})-\mathbb{P}(A_k\cap F_{k-1}) \nonumber \\
&\geq (1-x_k)\,\mathbb{P}(F_{k-1}),
\end{align}
with $\mathbb{P}(F_0) = 1$.
Iterating \eqref{eq: P(F_k)} for $k=1,2,\dots,n$ and using \eqref{eq: F_k}, we obtain
\[
\mathbb{P}\,\Biggl(\bigcap_{i=1}^n \overline{A_i}\Biggr)
=\mathbb{P}(F_n)
\geq \prod_{i=1}^n (1-x_i),
\]
which is \eqref{eq1: LLL}. In particular, since $x_i \in [0,1)$ for all $i \in [n]$, it follows that
\[
\mathbb{P}\,\left(\bigcap_{i=1}^n \overline{A_i}\right)>0.
\]
\end{proof}

\begin{remark}[On avoiding conditioning assumptions]
\label{remark: avoiding conditional probabilities}
{\em A common presentation of the LLL proves a variation of
Lemma~\ref{lemma: LLL-unconditional}, namely,
\begin{align}
\label{eq: cond. prob. inequality}
\mathbb{P}\left(A_i \,\big|\, \bigcap_{j\in S}\overline{A_j}\right) \leq x_i,
\qquad \forall\, S \subset [n], \; \; i \in [n] \setminus S,
\end{align}
by manipulating conditional probabilities via identities such as
\begin{align}
\label{eq: cond. prob. equality}
\mathbb{P}\,(A \mid B\cap C)=\frac{\mathbb{P}\,(A\cap B \mid C)}{\mathbb{P}\,(B \mid C)},
\end{align}
and expressing them in terms of conditional probabilities of the form
\[
\mathbb{P}\left(A_i \cap \bigcap_{j\in S \setminus S'} \overline{A_j} \; \big| \;
\bigcap_{j\in S'} \overline{A_j}\right)
\quad \text{and} \quad
\mathbb{P}\left(A_i \; \big| \; \bigcap_{j\in S'} \overline{A_j}\right),
\qquad S' \subset S.
\]
See for example \cite[pp.~616--617]{LLL75}, \cite[pp.~70--72]{AlonSpencer2016},
\cite[pp.~21--23]{Bollobas2001}, \cite[pp.~100--103]{Bremaud17},
\cite[pp.~280--282]{Jukna11}, \cite[pp.~111--114]{LiLin22}, \cite[pp.~30--31]{LintW2001},
\cite[pp.~147--150]{MitzenmacherU17}, \cite[pp.~226--228]{MolloyR2002}, \cite{Spencer75},
\cite{Spencer77}, and \cite[p.~266]{Knuth22}.
In standard proofs, such conditional probabilities are manipulated within
an inductive argument. However, their definition requires that the corresponding
conditioning events have positive probability, a fact whose validity is established
only within the proof itself. Hence, intermediate steps involve expressions
whose validity depends on properties that are justified only a posteriori.
The approach adopted here avoids introducing conditional probabilities altogether.
Instead, we work with inequalities such as
\begin{align}
\mathbb{P}\left(A_i \cap \bigcap_{j\in S}\overline{A_j}\right)
\leq x_i\,\mathbb{P}\left(\bigcap_{j\in S}\overline{A_j}\right),
\end{align}
which remain well-defined regardless of whether
$\mathbb{P}(\cap_{j\in S} \, \overline{A_j})$ is positive or zero.
This leads to a self-contained argument in which the positivity of
$\mathbb{P}(\overline{A_1} \cap \dots \cap \overline{A_n})$
follows without requiring separate justification of the positivity
of intermediate conditioning events.

An alternative proof of the (asymmetric) LLL that does not rely on
conditional probabilities, brought to our attention by one of the
anonymous referees, is presented in the lecture notes of Vondr\'{a}k
\cite{Vondrak-LN}.}
\end{remark}

\section{Symmetric Lov\'{a}sz Local Lemma}
\label{section: symmetric version}
In a wide range of applications, the events $\{A_i\}_{i=1}^n$ satisfy uniform bounds on their probabilities
and dependencies, allowing the conditions in Theorem~\ref{theorem: LLL} to be simplified. This gives the next result.

\begin{theorem}[Symmetric version of the Lov\'{a}sz Local Lemma (LLL)]
\phantomsection
\label{theorem: LLL - symmetric}
{\em Let $A_1, \ldots, A_n$ be events in an arbitrary probability space. Suppose that, for every $i \in [n]$,
the event $A_i$ is independent of the $\sigma$-algebra generated by all the remaining events, except for at
most $d \geq 1$ of them, and assume that $\mathbb{P}(A_i) \leq p$ for all $i \in [n]$. If $ep(d+1) \leq 1$, then
\begin{align}
\mathbb{P}\left( \bigcap_{i=1}^n \overline{A_i} \right) \ & \geq \ \biggl( \frac{d}{d+1} \biggr)^n \nonumber \\
\label{eq2: LLL}
\ & > \ e^{-\frac{n}{d}}.
\end{align}
In particular, the probability that none of the events occurs is positive.}
\end{theorem}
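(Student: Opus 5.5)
The plan is to derive the symmetric version directly from the general LLL (Theorem~\ref{theorem: LLL}), using the uniform choice $x_i := \frac{1}{d+1}$ for all $i \in [n]$.

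\textbf{Step 1: the dependency digraph.} For each $i$, let $\Gamma(i) \subseteq [n]\setminus\{i\}$ be a set of at most $d$ indices such that $A_i$ is independent of the $\sigma$-algebra generated by $\{A_j : j \notin \Gamma(i) \cup \{i\}\}$. Setting $E := \{(i,j) : j \in \Gamma(i)\}$ gives a dependency digraph in the sense of Definition~\ref{definition: Dependency digraph}, and every vertex has out-degree at most $d$.

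\textbf{Step 2: verify condition~\eqref{eq1: condition}.} Since $d \geq 1$, we have $x_i = \frac{1}{d+1} \in [0,1)$. For each $i$,
\[
x_i \prod_{j:(i,j)\in E}(1-x_j) \geq \frac{1}{d+1}\Bigl(\frac{d}{d+1}\Bigr)^{d} = \frac{1}{d+1}\Bigl(1+\frac1d\Bigr)^{-d} .
\]
The key elementary fact is $(1+\frac1d)^d < e$, which follows from $\ln(1+t) < t$ for $t>0$. Hence the right-hand side exceeds $\frac{1}{e(d+1)} \geq p \geq \mathbb{P}(A_i)$, using the hypothesis $ep(d+1)\le 1$.

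\textbf{Step 3: apply the LLL.} Theorem~\ref{theorem: LLL} then gives
\[
\mathbb{P}\Bigl(\bigcap_{i=1}^n \overline{A_i}\Bigr) \geq \prod_{i=1}^n (1-x_i) = \Bigl(\frac{d}{d+1}\Bigr)^n .
\]

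\textbf{Step 4: the strict bound.} It remains to show $\bigl(\frac{d}{d+1}\bigr)^n > e^{-n/d}$. This is equivalent to $n \ln(1+\frac1d) < \frac nd$, which again follows from $\ln(1+t) < t$ for $t = \frac1d > 0$ and $n \geq 1$. Positivity of the probability is then immediate.

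There is no real obstacle here. The only points needing care are the two elementary inequalities, both consequences of $\ln(1+t)<t$, and the bookkeeping that "at most $d$" dependent events yields out-degree at most $d$. A smaller out-degree only makes the product in Step~2 larger, since each factor $1-x_j$ lies in $[0,1]$.
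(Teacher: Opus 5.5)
Your proposal is correct and follows the paper's proof essentially step for step: the uniform choice $x_i=\frac{1}{d+1}$, a dependency digraph of out-degree at most $d$, the chain $\mathbb{P}(A_i)\le\frac{1}{e(d+1)}\le\frac{1}{d+1}\bigl(1+\frac1d\bigr)^{-d}$, and then an application of Theorem~\ref{theorem: LLL}. The only cosmetic difference is that you justify $\bigl(1+\frac1d\bigr)^d<e$ via $\ln(1+t)<t$, whereas the paper uses that the sequence $\bigl(1+\frac1k\bigr)^k$ increases monotonically to $e$.
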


\begin{remark}
{\em If $d=0$, then the events $\{A_i\}_{i=1}^n$ are mutually independent, so
\[
\mathbb{P}\left( \bigcap_{i=1}^n \overline{A_i} \right) = \prod_{i=1}^n \mathbb{P}(\overline{A_i})
\geq (1-p)^n > 0.
\]
Thus, the probability that none of the events $\{A_i\}_{i=1}^n$ occurs is positive whenever $p<1$,
in contrast to the stronger condition $p \leq \frac{1}{e}$ that would result from formally substituting
$d=0$ into the condition $ep(d+1)\leq 1$.}
\end{remark}

\begin{proof}
Let $d \geq 1$, and set $x_j := \frac1{d+1}$ for all $j \in [n]$.
By the assumption of Theorem~\ref{theorem: LLL - symmetric}, there exists a dependency digraph $D = ([n], E)$
in which every vertex has outdegree at most $d$ (see Definition~\ref{definition: Dependency digraph}).
Fix such a dependency digraph $D$ for $\{A_i\}_{i=1}^n$. Then, for all $i \in [n]$,
\begin{align}
\label{eq1: 05.03.26}
\mathbb{P}(A_i) &\leq \frac1{(d+1)e} \\
\label{eq2: 05.03.26}
&\leq \frac1{d+1} \; \biggl(1+\frac1d\biggr)^{-d} \\
\label{eq3: 05.03.26}
&= \frac1{d+1} \; \biggl(1-\frac1{d+1}\biggr)^d  \\
\label{eq4: 05.03.26}
&\leq x_i \prod_{j: (i,j) \in E} (1-x_j),
\end{align}
where \eqref{eq1: 05.03.26} holds by the assumption of the theorem;
\eqref{eq2: 05.03.26} follows from the fact that the sequence $\bigl\{ \bigl(1+\frac1k\bigr)^k \bigr\}_{k \in \mathbb{N}}$
is monotonically increasing and converges to $e$ as $k \to \infty$; \eqref{eq3: 05.03.26} follows by straightforward
algebra; finally, \eqref{eq4: 05.03.26} holds by the choice $x_i = \frac1{d+1}$ for all $i \in [n]$ and since
the outdegrees of the considered dependency digraph are at most $d$. It then follows from Theorem~\ref{theorem: LLL} that
\begin{align*}
\mathbb{P}\left( \bigcap_{i=1}^n \overline{A_i} \right) \geq \prod_{i=1}^n (1-x_i)
= \biggl( \frac{d}{d+1} \biggr)^n > e^{-\frac{n}{d}},
\end{align*}
where the last inequality holds since $\left(1+\frac1d\right)^d < e$.
\end{proof}

\noindent
By skipping inequality \eqref{eq1: 05.03.26} and starting instead from inequality \eqref{eq2: 05.03.26}, the next
sharper symmetric criterion is obtained.
\begin{corollary}[Spencer's bound]
\label{corollary: Spencer's bound}
{\em Let $A_1, \ldots, A_n$ be events in an arbitrary probability space. Suppose that, for every $i \in [n]$,
the event $A_i$ is independent of the $\sigma$-algebra generated by all the remaining events, except for at
most $d \geq 1$ of them, and assume that $\mathbb{P}(A_i) \leq p$ for all $i \in [n]$. If
\begin{align}
\label{eq1: Spencer}
p \leq \frac{d^d}{(d+1)^{d+1}},
\end{align}
then
\begin{align}
\label{eq2: Spencer}
\mathbb{P}\left( \bigcap_{i=1}^n \overline{A_i} \right) \ \geq \ \biggl( \frac{d}{d+1} \biggr)^n > e^{-\frac{n}{d}}.
\end{align}}
\end{corollary}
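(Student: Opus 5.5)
The plan is to rerun the proof of Theorem~\ref{theorem: LLL - symmetric} with the same uniform weights, but to enter its chain of inequalities at \eqref{eq2: 05.03.26} rather than at \eqref{eq1: 05.03.26}. Fix $d \geq 1$ and set $x_j := \frac{1}{d+1}$ for all $j \in [n]$, so that $x_j \in [0,1)$. By hypothesis, there is a dependency digraph $D=([n],E)$ for $\{A_i\}_{i=1}^n$ (Definition~\ref{definition: Dependency digraph}) in which every vertex has outdegree at most $d$. Fix such a $D$.

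The key step is to verify condition \eqref{eq1: condition} of Theorem~\ref{theorem: LLL}. For every $i \in [n]$,
\begin{align*}
\mathbb{P}(A_i) \leq p \leq \frac{d^d}{(d+1)^{d+1}}
= \frac{1}{d+1}\biggl(1-\frac{1}{d+1}\biggr)^{d}
\leq x_i \prod_{j:(i,j)\in E} (1-x_j).
\end{align*}
The middle equality is the algebraic identity already used in \eqref{eq3: 05.03.26}. The last inequality holds because the product has at most $d$ factors, each equal to $1-\frac{1}{d+1} \in (0,1)$, so deleting factors can only increase it.

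Theorem~\ref{theorem: LLL} then gives
\[
\mathbb{P}\left(\bigcap_{i=1}^n \overline{A_i}\right) \geq \prod_{i=1}^n (1-x_i) = \biggl(\frac{d}{d+1}\biggr)^n .
\]
The strict inequality $\bigl(\frac{d}{d+1}\bigr)^n > e^{-n/d}$ is equivalent to $\bigl(1+\frac1d\bigr)^{d} < e$, which is the same fact used at the end of the proof of Theorem~\ref{theorem: LLL - symmetric}.

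There is no real obstacle. The only points needing care are the exponent bookkeeping in $\frac{d^d}{(d+1)^{d+1}} = \frac{1}{d+1}\bigl(\frac{d}{d+1}\bigr)^d$ and the observation that outdegree at most $d$ (rather than exactly $d$) is enough. It is also worth remarking that \eqref{eq1: Spencer} is weaker than $ep(d+1)\leq 1$ by \eqref{eq2: 05.03.26}, so the corollary genuinely sharpens Theorem~\ref{theorem: LLL - symmetric}.
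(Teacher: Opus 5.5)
Your proposal is correct and is essentially the paper's own argument. The paper obtains the corollary by rerunning the proof of Theorem~\ref{theorem: LLL - symmetric} with $x_j = \frac{1}{d+1}$ and entering the chain at \eqref{eq2: 05.03.26} instead of \eqref{eq1: 05.03.26}. It uses the identity $\frac{d^d}{(d+1)^{d+1}} = \frac{1}{d+1}\bigl(1-\frac{1}{d+1}\bigr)^d$, and the bound $\bigl(1+\frac1d\bigr)^d<e$ for the final strict inequality, just as you do.
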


\begin{corollary}
\label{corollary: 1/2}
{\em If $e p \bigl(d + \tfrac12 \bigr) \leq 1$, then \eqref{eq2: Spencer} holds.}
\end{corollary}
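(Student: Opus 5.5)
The plan is to reduce Corollary~\ref{corollary: 1/2} to Corollary~\ref{corollary: Spencer's bound}. Corollary~\ref{corollary: 1/2} keeps the setting of that corollary (events with $\mathbb{P}(A_i)\le p$, each independent of all but at most $d\ge 1$ others). So it suffices to show that $ep(d+\tfrac12)\le 1$ implies condition \eqref{eq1: Spencer}. Concretely, we will prove the purely analytic inequality
\[
\frac{1}{e\,(d+\frac12)} \;\leq\; \frac{d^d}{(d+1)^{d+1}}, \qquad d \in \mathbb{N}.
\]
Once this holds, $p\le \frac{1}{e(d+1/2)}\le \frac{d^d}{(d+1)^{d+1}}$, and Corollary~\ref{corollary: Spencer's bound} yields \eqref{eq2: Spencer} directly. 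Equivalently, using the chain \eqref{eq2: 05.03.26}--\eqref{eq4: 05.03.26} with $x_j=\frac{1}{d+1}$, we could invoke Theorem~\ref{theorem: LLL}.

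Rewriting, the target inequality is
\[
\Bigl(1+\tfrac1d\Bigr)^{d}\Bigl(1+\tfrac1d\Bigr)\le e\Bigl(1+\tfrac{1}{2d}\Bigr),
\]
that is, $(d+1)\ln(1+\frac1d)\le 1+\ln(1+\frac1{2d})$. I would set $t=1/d\in(0,1]$ and define
\[
g(t):=1+\ln\bigl(1+\tfrac t2\bigr)-\bigl(1+\tfrac1t\bigr)\ln(1+t),
\]
with the aim of showing $g\ge 0$ on $(0,1]$. As $t\to0^+$, expanding gives $(1+\frac1t)\ln(1+t)=1+\frac t2-\frac{t^2}{6}+O(t^3)$ and $\ln(1+\frac t2)=\frac t2-\frac{t^2}{8}+O(t^3)$. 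Hence $g(t)=\frac{t^2}{24}+O(t^3)\ge0$ near $0$. This is consistent with the known fact that the optimal threshold is $d+\frac12+o(1)$.

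The main obstacle is making the inequality rigorous for all $t\in(0,1]$, not just asymptotically. My first attempt would be termwise series bounds valid on $(0,1]$. From the alternating series,
\[
\bigl(1+\tfrac1t\bigr)\ln(1+t)=1+\sum_{k\ge1}\frac{(-1)^{k+1}t^k}{k(k+1)}\le 1+\frac t2-\frac{t^2}{6}+\frac{t^3}{12},
\]
and similarly $\ln(1+\frac t2)\ge \frac t2-\frac{t^2}{8}$. Together these give $g(t)\ge \frac{t^2}{24}-\frac{t^3}{12}$. That bound is nonnegative only for $t\le \frac12$, i.e. $d\ge2$. So I would keep one more term in each expansion: $\ln(1+\frac t2)\ge \frac t2-\frac{t^2}{8}+\frac{t^3}{24}-\frac{t^4}{64}$, and the upper bound $1+\frac t2-\frac{t^2}{6}+\frac{t^3}{12}-\frac{t^4}{20}+\frac{t^5}{30}$. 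This yields $g(t)\ge \frac{t^2}{24}-\frac{t^3}{24}+O(t^4)$ with explicit small quartic and quintic terms, which I expect to be positive on $(0,1]$.

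If that bookkeeping gets messy, I would fall back to checking the few smallest values of $d$ directly. For $d=1$ the inequality reads $4\le \frac{3e}{2}\approx4.077$, which holds. For $d=2$, $4.5\cdot\frac{3}{2}=6.75\le\frac{5e}{4}\approx... $ in the form $(\frac32)^3\le \frac{5e}{4}$, i.e. $3.375\le 3.398$, which also holds. The series bound would then only be needed for $d\ge3$, where the margin is comfortable.
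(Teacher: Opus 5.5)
Your proposal is correct. The reduction is the same as the paper's: you deduce Corollary~\ref{corollary: 1/2} from Corollary~\ref{corollary: Spencer's bound} via $\frac{1}{e(d+1/2)} \le \frac{d^d}{(d+1)^{d+1}}$. Your function $g$ is exactly the paper's $\phi$ from Appendix~\ref{appendix: 1/2} after the substitution $t = 1/x$.

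Where you differ is in proving $g \ge 0$. The paper shows $\phi' < 0$ on $(0,\infty)$, using the auxiliary function $\ln(1+t) - \frac{2t}{2+t}$, whose derivative is $\frac{t^2}{(1+t)(2+t)^2} > 0$. It then combines this with $\phi(x) \to 0$ as $x \to \infty$. You instead truncate alternating series. These truncations are legitimate, since for $t \in (0,1]$ the terms $\frac{t^m}{m(m+1)}$ and $\frac{(t/2)^m}{m}$ are decreasing.

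Your closure works. The cubic bound $g(t) \ge \frac{t^2}{24} - \frac{t^3}{12}$ covers every $d \ge 2$, and the direct check $4 \le \frac{3e}{2}$ covers $d=1$. The quintic refinement you only ``expect'' to work also works. It gives $g(t) \ge \frac{t^2(1-t)}{24} + t^4\bigl(\frac{11}{320} - \frac{t}{30}\bigr)$, and $\frac{11}{320} - \frac{1}{30} = \frac{1}{960} > 0$. However, the margin at $t=1$ is thin rather than ``comfortable,'' so the cubic bound plus the $d=1$ check is the cleaner finish. Your garbled $d=2$ line should read $\frac{27}{4} \le \frac{5e}{2}$; the corrected form $(\frac32)^3 \le \frac{5e}{4}$ that you then state is fine.

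As for what each route buys: the paper's monotonicity argument gives strict positivity for all real $x>0$ in one stroke, with no case split and no numerics. Your route instead yields an explicit quantitative margin, $\phi(d) \ge \frac{1}{24d^2} - \frac{1}{12d^3}$. This shows the slack is of order $1/d^2$, matching the leading term $\frac{t^2}{24}$ of your expansion. That is the same phenomenon the paper uses separately, via the expansion of $(1+\frac1d)^d$, to show that $\frac12$ is optimal.
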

\begin{proof}
The condition $ep \bigl(d + \tfrac12 \bigr) \leq 1$ yields \eqref{eq1: Spencer},
as justified in Appendix~\ref{appendix: 1/2}. Hence, \eqref{eq2: Spencer} holds by
Corollary~\ref{corollary: Spencer's bound}.
\end{proof}

\begin{remark}
\label{remark: history and novelty}
{\em The original version of the LLL, introduced by Erd\H{o}s and
Lov\'asz in~\cite{LLL75}, asserts that
$\mathbb{P}\Bigl(\bigcap_{i=1}^n \overline{A_i}\Bigr) > 0$
provided that $4pd \leq 1$, where $p$ is an upper bound on
the probability of each event $A_i$, and $d$ is an upper
bound on the number of dependencies of each event.
The standard sharpened formulation of the symmetric LLL, stated in
Theorem~\ref{theorem: LLL - symmetric}, establishes the condition
$ep(d+1) \leq 1$.
A further refinement, due to Spencer~\cite[Theorem~1.4]{Spencer77},
shows that the same conclusion holds under the weaker condition
\eqref{eq1: Spencer}. Theorem~\ref{theorem: LLL - symmetric} and
Corollary~\ref{corollary: Spencer's bound} also provide the explicit
lower bound $\Bigl(\frac{d}{d+1}\Bigr)^n$ on the probability that none
of the events $\{A_i\}_{i=1}^n$ occurs, thereby strengthening the
classical assertion that this probability is merely positive. By
Corollary~\ref{corollary: 1/2}, the lower bound in~\eqref{eq2: Spencer}
holds, in particular, if $ ep \, \Bigl(d + \tfrac12 \Bigr) \leq 1 $.}
\end{remark}

In \cite{Spencer77}, Spencer defined $f(d)$ as the supremum of the set of all
$x \in [0,1)$ such that, whenever $\mathbb{P}(A_i) \leq x$ for all
$i \in [n]$ and each event $A_i$ is independent of the $\sigma$-algebra
generated by all but at most $d \geq 1$ of the remaining events, then
$\mathbb{P}\left(\bigcap_{i=1}^n \overline{A_i}\right) > 0$.
He further asked whether the limit $\underset{d \to \infty}{\lim} \, d \, f(d)$ exists,
and if so, what its value is. The following theorem of
Shearer~\cite[Theorem~2]{Shearer85} answers this question.
\begin{theorem}[Shearer]
\label{theorem: Shearer}
{\em For every integer $d \geq 1$,
\begin{align}
\label{eq: f}
f(d) =
\begin{cases}
\hspace*{0.5cm} \tfrac12, & \text{if } d=1, \\[0.2cm]
\frac{(d-1)^{d-1}}{d^d}, & \text{if } d \geq 2.
\end{cases}
\end{align}
Consequently,
\begin{align}
\label{eq: limit}
\lim_{d \to \infty} \, d f(d) = \frac{1}{e},
\end{align}
and the constant $e$ in the condition $ep(d+1) \leq 1$ of Theorem~\ref{theorem: LLL - symmetric} is asymptotically best possible.}
\end{theorem}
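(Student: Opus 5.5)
The proof has two parts: a \emph{sufficiency} part, showing that $\mathbb{P}(A_i)\le p<f(d)$ forces $\mathbb{P}\bigl(\bigcap_i\overline{A_i}\bigr)>0$, and a \emph{sharpness} part, giving for every $p>f(d)$ a finite family of events with the stated dependency structure whose complements have empty intersection. As in Shearer's setting, I work with a symmetric (undirected) dependency graph of maximum degree $d$, i.e.\ a digraph $D=([n],E)$ with $(i,j)\in E \Leftrightarrow (j,i)\in E$. I expect the general directed version to reduce to this or to need a separate remark; I would not try to settle that first. The limit \eqref{eq: limit} is immediate once \eqref{eq: f} is known, since $d\,f(d)=\bigl(1-\tfrac1d\bigr)^{d-1}\to e^{-1}$. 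This comparison with Theorem~\ref{theorem: LLL - symmetric} gives the asymptotic optimality of the constant $e$. The case $d=1$ is elementary. The dependency graph is then a matching, so $\mathbb{P}\bigl(\bigcap\overline{A_i}\bigr)$ factors over the edges, and each factor satisfies $\mathbb{P}(\overline{A}\cap\overline{B})\ge 1-2p>0$ when $p<\tfrac12$. The pair $A$, $B=\overline{A}$ with $\mathbb{P}(A)=\tfrac12$ shows sharpness.

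\textbf{Sufficiency for $d\ge 2$.} I would refine Lemma~\ref{lemma: LLL-unconditional} with $x:=\tfrac1d$, so that $x(1-x)^{d-1}=\frac{(d-1)^{d-1}}{d^d}$. The strengthened claim is as follows: for $S\subset[n]$ and $i\notin S$ such that $i$ has at least one neighbour outside $S$, one has $\mathbb{P}(A_i\cap F(S))\le x\,\mathbb{P}(F(S))$. The induction runs exactly as in Step~1, with $S_1$ the set of neighbours of $i$ in $S$. Since one neighbour of $i$ lies outside $S$, we get $|S_1|\le d-1$. This lets the hypothesis $p\le x(1-x)^{d-1}$ close the step. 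When the induction hypothesis is applied to the pair $(j_t,T_t)$, the vertex $i$ is a neighbour of $j_t$ not lying in $T_t$; this is exactly where symmetry of the graph is used.

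For Step~2 I would order the vertices so that each of $1,\dots,n-1$ has a neighbour among the later vertices. For instance, I would take the reverse of a BFS order in each connected component, handling components separately since the probability factorizes over them. The chain bound then gives $\mathbb{P}(F_{n-1})\ge(1-x)^{n-1}$. For the final vertex, with at most $d$ neighbours in $S$, the same splitting yields two bounds:
\[
\mathbb{P}(A_n\cap F_{n-1})\le p\,\mathbb{P}(F(S_2)), \qquad \mathbb{P}(F_{n-1})\ge(1-x)^d\,\mathbb{P}(F(S_2)).
\]
Here $\mathbb{P}(F(S_2))>0$, again by the chain bound. Positivity of $\mathbb{P}(F_n)$ then follows from $p<(1-\tfrac1d)^d$, which holds because $p\le\frac1{d-1}(1-\frac1d)^d$ and $d\ge2$.

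\textbf{Sharpness, the main obstacle.} For $p>\frac{(d-1)^{d-1}}{d^d}$ I would use the truncated $d$-regular tree $T_h$ of depth $h$, whose root has $d-1$ children and all other internal vertices $d-1$ children as well (so degrees are at most $d$). On it I would build \emph{Shearer's extremal events}: adjacent events are disjoint, non-adjacent families are independent, and each event has probability $p$. Then $\mathbb{P}\bigl(\bigcap\overline{A_i}\bigr)=Z_{T_h}(-p)$, where $Z$ is the independence polynomial $\sum_{I\text{ indep.}}(-p)^{|I|}$. With $r_h:=1-Z_{T_h}(-p)/Z_{T_h-\mathrm{root}}(-p)$, the tree recursion reads
\[
r_{h+1}=\frac{p}{(1-r_h)^{d-1}}, \qquad r_0=p.
\]
A fixed point exists iff $\max_r r(1-r)^{d-1}\ge p$, and this maximum is attained at $r=\tfrac1d$ with value $\frac{(d-1)^{d-1}}{d^d}$. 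Hence for larger $p$ the iterates reach $1$ in finitely many steps, and $Z_{T_h}(-p)\le0$ for some $h$. The delicate point is constructing the events. The "disjoint on edges, independent otherwise" measure exists only while $Z_{U}(-p)>0$ for all vertex subsets $U$; it is defined by inclusion–exclusion with the atoms given by independence polynomials. I would therefore take the first depth $h$ at which positivity fails and lower the probabilities $p$ to $p'\le p$ at the failing step so that $Z$ vanishes exactly. This gives a valid probability space with $\mathbb{P}(A_i)\le p$ and $\mathbb{P}\bigl(\bigcap\overline{A_i}\bigr)=0$, hence $f(d)\le p$.
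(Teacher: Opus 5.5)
The paper gives no proof of this theorem; it only quotes Shearer. Your outline therefore has to stand on its own. In the undirected setting its architecture is sound: the refined Lemma~\ref{lemma: LLL-unconditional} with $x=\frac1d$, using the parent as the outside neighbour, and the hard-core measure on the $(d-1)$-ary tree with $r_{h+1}=p/(1-r_h)^{d-1}$. There is, however, one real gap and two steps that are wrong as stated.

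\textbf{The gap: digraphs.} The paper defines $f(d)$ by requiring each $A_i$ to be independent of the $\sigma$-algebra generated by all but at most $d$ of the other events. In the proof of Theorem~\ref{theorem: LLL - symmetric}, the paper reads this phrase as a dependency \emph{digraph} of out-degree at most $d$. Your lower bound $f(d)\ge (d-1)^{d-1}/d^d$ does not reach that class. The induction step needs $i$ to be a neighbour of $j_t$ lying outside $T_t$. If only the arc $(i,j_t)$ is present, $j_t$ may have all $d$ of its out-neighbours inside $T_t$, and your recursion then closes only under $p\le d^d/(d+1)^{d+1}$. Likewise, for $d=1$ a digraph of out-degree $1$ need not be a matching. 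The sharpness half does transfer, since symmetric examples are digraphs. So what you prove is the theorem for undirected dependency graphs of maximum degree $d$, which is Shearer's own setting. For the statement as written, the directed lower bound is missing, and ``I expect it to reduce'' is not an argument.

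\textbf{Factorization is false.} You assert twice that $\mathbb{P}\bigl(\bigcap_i\overline{A_i}\bigr)$ factors: once over the edges of a matching, once over connected components. A dependency graph only makes each single $A_i$ independent of the $\sigma$-algebra of its non-neighbours; it does not make different components independent. For example, give four bits the law $\frac1{16}\bigl(1+\varepsilon(-1)^{\omega_1+\omega_2+\omega_3+\omega_4}\bigr)$ with $0<|\varepsilon|<1$. All three-dimensional marginals are uniform, so the matching $\{1,2\},\{3,4\}$ is a dependency graph for $A_i=\{\omega_i=1\}$. Yet $\mathbb{P}(\overline{A_1}\cap\overline{A_2}\cap\overline{A_3}\cap\overline{A_4})=\frac{1+\varepsilon}{16}\neq\frac1{16}$. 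Both uses are easy to avoid:
\begin{itemize}
\item For $d=1$, process matched pairs consecutively and use $\mathbb{P}(F_{2k})\ge(1-p_{2k-1}-p_{2k})\,\mathbb{P}(F_{2k-2})$. This needs only that each of $A_{2k-1},A_{2k}$ is independent of the earlier events.
\item For $d\ge2$, concatenate the reversed BFS orders of all components and treat every component root exactly as you treat the final vertex. This yields a factor $1-p\,(1-\frac1d)^{-d}>0$ per root.
\end{itemize}

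\textbf{Sharpness needs nonnegativity of all atoms.} The hard-core measure has atoms $\mathbb{P}(\text{exactly the events in } I \text{ occur})=(p')^{|I|}\,Z_{V\setminus N[I]}(-p')$, where $Z_U$ is the independence polynomial of the induced subgraph on $U$. You must check that no $Z_U(-p')$ is negative after lowering $p$; the vanishing of $Z_{T_h}$ alone does not give this. One way to do it:
\begin{itemize}
\item Take $p'=\lambda^*p$, where $\lambda^*\in(0,1]$ is the least zero of $\lambda\mapsto Z_{T_h}(-\lambda p)$. It exists because $Z_{T_h}(0)=1$ and $Z_{T_h}(-p)\le0$.
\item By induction on the number of vertices, positivity of $Z_G(-\lambda p)$ on $[0,\lambda^*)$ is inherited by every induced subgraph. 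Suppose $\lambda_0<\lambda^*$ were the first zero of $Z_{G-v}$. Then $Z_{G-N[v]}(-\lambda_0p)\ge0$, by induction applied to $G-v$ together with continuity. Hence $Z_G(-\lambda_0p)=-\lambda_0p\,Z_{G-N[v]}(-\lambda_0p)\le0$, a contradiction.
\item By continuity, all $Z_U(-p')\ge0$, so the measure exists. It makes each $A_v$ independent of the $\sigma$-algebra of its non-neighbours. It gives $\mathbb{P}\bigl(\bigcap_v\overline{A_v}\bigr)=Z_{T_h}(-p')=0$ with $\mathbb{P}(A_v)=p'\le p$.
\end{itemize}
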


\begin{corollary}[Problem~319 of \cite{Knuth22}] \label{corollary: Knuth}
{\em Let $A_1, \ldots, A_n$ be events in an arbitrary probability space. Suppose that, for all $i \in [n]$,
the event $A_i$ is independent of the $\sigma$-algebra generated by all the remaining events, except for at
most $d \geq 1$ of them, and $\mathbb{P}(A_i) \leq p$. If $epd \leq 1$, then
$\mathbb{P}\left( \bigcap_{i=1}^n \overline{A_i} \right) > 0$.}
\end{corollary}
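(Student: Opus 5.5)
The plan is to deduce Corollary~\ref{corollary: Knuth} from Shearer's characterization in Theorem~\ref{theorem: Shearer}. By the definition of $f(d)$ as a supremum, positivity of $\mathbb{P}\bigl(\bigcap_{i=1}^n \overline{A_i}\bigr)$ is guaranteed whenever $p<f(d)$. If $p=f(d)$ exactly, we still need positivity; we will argue this separately. Shearer's value is in fact attained, since the extremal configuration achieves probability zero only beyond the threshold. Alternatively, the boundary case can be avoided, because $epd\le 1$ gives $p\le 1/(ed)$, which we will show is strictly less than $f(d)$.

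\textbf{Case $d=1$.} Here $f(1)=\tfrac12$ and $p\le 1/e<\tfrac12$, so $p<f(1)$. The conclusion then follows from the definition of $f$ as a supremum: some admissible $x\in(p,f(1)]$ exists, and any bound $\mathbb{P}(A_i)\le p\le x$ yields positivity.

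\textbf{Case $d\ge 2$.} We need the strict inequality
\[
\frac{1}{ed} < \frac{(d-1)^{d-1}}{d^{d}},
\]
which is equivalent to $d^{d-1} < e\,(d-1)^{d-1}$, that is, $\bigl(1+\tfrac{1}{d-1}\bigr)^{d-1} < e$. This holds because the sequence $\bigl(1+\tfrac1k\bigr)^k$ increases strictly to $e$, the same fact already used in the proof of Theorem~\ref{theorem: LLL - symmetric}. Hence $p\le 1/(ed)<f(d)$. By the supremum definition, there is an $x\in(p,f(d)]$ belonging to the set that defines $f(d)$. Since $\mathbb{P}(A_i)\le p<x$ and each $A_i$ has at most $d$ dependencies, we conclude $\mathbb{P}\bigl(\bigcap_{i=1}^n\overline{A_i}\bigr)>0$.

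\textbf{Main obstacle.} The substantive content lies entirely in Shearer's theorem, which we assume. The one delicate point is the supremum argument. The set defining $f(d)$ is downward closed: if $x$ works, then any $x'<x$ works, since a hypothesis $\mathbb{P}(A_i)\le x'$ implies $\mathbb{P}(A_i)\le x$. Strict inequality $p<f(d)$ therefore suffices, and this is why the strict numerical comparison is needed rather than a weak one.

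If one prefers to avoid relying on Shearer, a fallback is to apply Theorem~\ref{theorem: LLL} directly with $x_j=1/d$ for $d\ge2$. The condition becomes $p\le \frac1d(1-\frac1d)^{d}$, which is weaker than $1/(ed)$ and so fails. Hence Shearer's sharper bound is genuinely needed, and we commit to the route above.
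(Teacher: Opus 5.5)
Your proof is correct and follows the paper's argument: you prove the strict inequality $\frac{1}{ed} < f(d)$ (directly for $d=1$, and via the strictly increasing convergence of $(1+\frac1k)^k$ to $e$ for $d \geq 2$) and then invoke Shearer's theorem, and your explicit downward-closedness step for the supremum fills in a detail the paper leaves implicit. Your side claim that Shearer's value is attained at $p = f(d)$ is unsupported, but it plays no role, since you avoid the boundary case through the strict inequality.
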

\begin{proof}
For every $d \in \mathbb{N}$, we have $\frac{1}{ed} < f(d)$. Indeed, if $d=1$, then
$\frac1e < \frac12 = f(1)$, and if $d \geq 2$, then
\begin{align*}
\frac{1}{ed} &< \frac1d \, \biggl(1 + \frac1{d-1}\biggr)^{-(d-1)}
= \frac{(d-1)^{d-1}}{d^d} = f(d).
\end{align*}
Hence, if $epd \leq 1$, then $p < f(d)$, and the result follows
from Theorem~\ref{theorem: Shearer}.
\end{proof}

\begin{remark}
\label{remark: comparison of 2 corollaries}
{\em Corollary~\ref{corollary: 1/2} is not implied by Corollary~\ref{corollary: Knuth}. Indeed,
under the condition $epd \leq 1$ of Corollary~\ref{corollary: Knuth}, the probability that 
none of the events $\{A_i\}_{i=1}^n$ occurs is only guaranteed to be positive for a dependency 
digraph with maximum degree at most $d$. By contrast, under the stronger condition 
$ep \, \Bigl( d+\tfrac12 \Bigr) \leq 1$ of Corollary~\ref{corollary: 1/2}, this
probability is lower-bounded by $e^{-n/d}$ whenever $d \geq 1$.  
For an undirected dependency graph on $n$ vertices, where the maximum degree and its upper 
bound $d$ may exceed those of a corresponding dependency digraph, Vaccaro recently communicated 
to us a proof that if $epd \leq 1$, then the probability that none of the events occurs is at 
least $e^{-n/d}$ whenever $d \geq 1$ \cite{Vaccaro2026}.}
\end{remark}

\section{Combinatorial applications of the Lov\'{a}sz Local Lemma}
\label{section: applications}

This section presents several classical applications of the symmetric and asymmetric LLL in probabilistic combinatorics
and graph theory. These applications include bounds for the problem of edge-disjoint paths
(Section~\ref{subsection: Edge-disjoint paths}), the satisfiability problem for Boolean formulas
in conjunctive normal form (Section~\ref{subsection: k-SAT problems}), lower bounds on diagonal
and off-diagonal Ramsey numbers (Sections~\ref{subsection: diagonal Ramsey numbers}
and~\ref{subsection: off-diagonal Ramsey numbers}, respectively), hypergraph coloring results
(Section~\ref{subsection: Coloring hypergraphs}), the existence of directed cycles with
prescribed modular length (Section~\ref{subsection: Length of cycles in directed graphs}), and
an upper bound on the acyclic chromatic number of graphs (Section~\ref{subsection: Acyclic coloring of graphs}).
These applications highlight the versatility of the LLL as a tool for probabilistic existence proofs,
while strengthened results and additional observations are incorporated throughout this section.

\subsection{Edge-disjoint paths}
\label{subsection: Edge-disjoint paths}
Assume that in a communication network, $n$ pairs of users need to communicate via edge-disjoint paths,
and that for each pair there are at least a given number of candidate paths. Using the symmetric version
of the LLL, one can show that if, for every two distinct pairs of users, each candidate path of one pair
intersects (in an edge) only a limited number of candidate paths of the other pair, then there exists a
choice of edge-disjoint paths connecting all $n$ pairs. The following result makes this statement precise
and provides a strengthened version of \cite[Theorem~6.12]{MitzenmacherU17}.

\begin{theorem}[Edge-disjoint paths]
\label{theorem: Edge-disjoint paths}
{\em Let $G=(V,E)$ be a graph, and let $\{\{x_i,y_i\}\}_{i=1}^n$ be different unordered pairs of distinct
vertices. For each $i \in [n]$, let $\mathcal{Q}_i$ be a set of paths connecting $x_i$ and $y_i$, where
$|\mathcal{Q}_i| \geq m$ for some fixed $m \in \mathbb{N}$.
Assume that for all distinct $i, j \in [n]$, each path in $\mathcal{Q}_i$ shares
an edge with at most $k$ paths in $\mathcal{Q}_j$. If
\begin{align}
\label{eq: 2.5.26}
\begin{cases}
k<m,  & n=2, \\[2mm]
e \cdot \frac{k}{m} \cdot (2n-4) \leq 1,  & n \geq 3,
\end{cases}
\end{align}
then there exist paths $P_i \in \mathcal{Q}_i$, for all $i \in [n]$, such that
$P_1, \dots, P_n$ are pairwise edge-disjoint.}
\end{theorem}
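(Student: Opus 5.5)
The plan is to apply the symmetric LLL in the sharpened form of Corollary~\ref{corollary: Knuth}, which requires only $epd \leq 1$. This is what produces the factor $2n-4$ in \eqref{eq: 2.5.26} instead of the $2n-3$ that Theorem~\ref{theorem: LLL - symmetric} would give.

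\emph{Random construction.} Replace each $\mathcal{Q}_i$ by an arbitrary subfamily $\mathcal{Q}_i' \subseteq \mathcal{Q}_i$ with exactly $m$ paths. The hypothesis that each path of one family meets at most $k$ paths of another is inherited by subfamilies. Choose $P_i \in \mathcal{Q}_i'$ uniformly at random, independently for $i \in [n]$.

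\emph{Bad events and their probabilities.} For each unordered pair $\{i,j\}$ with $i \neq j$, let $A_{\{i,j\}}$ be the event that $P_i$ and $P_j$ share an edge. Condition on $P_i = P$. At most $k$ of the $m$ equally likely choices of $P_j$ meet $P$, so the conditional probability is at most $k/m$. Averaging over $P$ gives $\mathbb{P}(A_{\{i,j\}}) \leq p := k/m$.

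\emph{Dependency structure.} The event $A_{\{i,j\}}$ is a function of $(P_i,P_j)$ only. Consider the $\sigma$-algebra generated by the events $A_{\{k,l\}}$ with $\{k,l\}\cap\{i,j\}=\varnothing$. It is contained in $\sigma(P_r : r \notin \{i,j\})$, which is independent of $(P_i,P_j)$. Hence $A_{\{i,j\}}$ is independent of that $\sigma$-algebra. The pairs meeting $\{i,j\}$ other than $\{i,j\}$ itself are $\{i,l\}$ and $\{j,l\}$ with $l \notin\{i,j\}$, and there are $2(n-2)$ of them. So each event depends on at most $d := 2n-4$ others, and $d \geq 2$ when $n \geq 3$.

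\emph{Conclusion.} For $n\geq 3$, the hypothesis $e\cdot\frac{k}{m}\cdot(2n-4)\leq 1$ is exactly $epd\leq 1$. Corollary~\ref{corollary: Knuth} then gives, with positive probability, that no $A_{\{i,j\}}$ occurs. So some choice of $P_1,\dots,P_n$ is pairwise edge-disjoint. For $n=2$ there is a single bad event, with $\mathbb{P}(A_{\{1,2\}})\leq k/m<1$, so its complement has positive probability directly.

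The main step to get right is that the dependency count is $2n-4$ and that $d$, not $d+1$, enters the condition. The latter rests on Corollary~\ref{corollary: Knuth}, hence on Shearer's Theorem~\ref{theorem: Shearer}. That corollary needs $d\geq 1$, which is why the case $n=2$ (where $d=0$) is handled separately.
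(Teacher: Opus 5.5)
Your proposal is correct and follows essentially the same route as the paper: independent uniform choice of the paths, the bound $\mathbb{P}(A_{\{i,j\}}) \leq k/m$, the count of $2n-4$ dependent events, Corollary~\ref{corollary: Knuth} for $n \geq 3$, and a direct argument for $n=2$. Trimming each $\mathcal{Q}_i$ to exactly $m$ paths is a harmless extra step, since the paper simply uses $k/|\mathcal{Q}_j| \leq k/m$; note also that writing $A_{\{k,l\}}$ reuses the letter $k$ as an index, which clashes with the intersection bound $k$.
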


\begin{proof}
For each $i \in [n]$, let the path $P_i$ be chosen independently and uniformly at random
from $\mathcal{Q}_i$. Consider the bad events $A_{i,j} = \{ P_i \text{ and } P_j \text{ share an edge}\}$,
for all pairs $(i,j)$ such that $1 \leq i < j \leq n$.

\smallskip

\noindent
{\em Step 1: Bounding the probability of a bad event.}
Fix $i,j \in [n]$ with $i<j$. Once the path $P_i$ is chosen, it shares an edge with at most $k$
paths in $\mathcal{Q}_j$. Since $P_j$ is chosen independently and uniformly at random from $\mathcal{Q}_j$,
where $|\mathcal{Q}_j| \geq m$, it follows that
\begin{align}
\mathbb{P}(A_{i,j}) \leq \frac{k}{m}.
\end{align}
For $n=2$, there is only one bad event, namely that the two selected paths share an edge. Since $\mathbb{P}(A_{1,2}) \leq \frac{k}{m}$,
the sharp sufficient condition for the existence of a choice of two edge-disjoint paths in this case is $k<m$. Conversely, if $k=m$, 
then the conclusion may fail; for example, this occurs if every path in $\mathcal{Q}_1$ intersects every path in $\mathcal{Q}_2$. 

\smallskip 
\noindent 
We next consider the case where $n \geq 3$.

\noindent
{\em Step 2: The dependency structure.}
Each event $A_{i,j}$ depends only on the random choices $P_i$ and $P_j$.
Since the random paths $P_1,\dots,P_n$ are mutually independent, it follows that
$A_{i,j}$ is independent of the $\sigma$-algebra generated by all the remaining
bad events, except possibly for those events $A_{r,s}$ for which
$\{r,s\} \cap \{i,j\} \neq \varnothing$.
These excluded events are precisely those of the form $A_{i,\ell}$ or $A_{\ell,i}$, and
$A_{j,\ell}$ or $A_{\ell,j}$, where $\ell \in [n] \setminus \{i,j\}$. Other than $A_{i,j}$,
there are exactly $n-2$ bad events involving the index $i$, and similarly $n-2$ bad events
involving the index $j$. Hence, every bad event $A_{i,j}$ is independent of the $\sigma$-algebra
generated by all but at most $2n-4$ of the remaining bad events.

\smallskip

\noindent
{\em Step 3: Applying the symmetric LLL.}
For $n \geq 3$, we apply Corollary~\ref{corollary: Knuth} with
\begin{align}
p = \frac{k}{m}, \qquad d = 2n-4.
\end{align}
If $e p d \leq 1$, then with positive probability no bad event $A_{i,j}$ (for $1 \leq i < j \leq n$)
occurs. This condition coincides with \eqref{eq: 2.5.26}, which holds by assumption. Therefore,
there exist pairwise edge-disjoint paths $P_i \in \mathcal{Q}_i$ for all $i \in [n]$.
\end{proof}

\subsection{Satisfibability problems}
\label{subsection: k-SAT problems}
The $k$-SAT problem is a fundamental object in probabilistic combinatorics, concerned with the satisfiability
of Boolean formulas in conjunctive normal form (CNF), where a formula is expressed as an AND of clauses, each
clause being an OR of literals (variables or their negations), and each clause contains exactly $k$ literals.
In this setting, each clause can be associated with a bad event, namely that the clause is not satisfied. By
endowing the space of assignments with a suitable probability measure, one obtains a collection of (typically
dependent) bad events, making $k$-SAT a natural framework for applications of the LLL. The lemma then provides
sufficient conditions under limited dependency to guarantee the existence of an assignment that avoids all such
events, and hence satisfies the formula. The following result strengthens a result appearing in
\cite[Lemma~19.8]{Jukna11}, \cite[Theorem~6.13]{MitzenmacherU17}, and \cite[Theorem~3.1]{Molloy98}.

\begin{theorem}[Satisfiability criterion for a $k$-SAT formula]
\label{theorem: k-SAT formula}
{\em A $k$-SAT formula is satisfiable if no variable appears in more than $\frac{2^k}{ek}$ clauses.
Furthermore, for such a $k$-SAT formula with $n$ clauses, a uniformly random assignment satisfies
the formula with probability at least $\exp\bigl(-en 2^{-k}\bigr)$.}
\end{theorem}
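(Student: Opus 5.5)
Assign each variable independently and uniformly at random in $\{0,1\}$, and for each clause $C_i$, $i \in [n]$, let $A_i$ be the event that $C_i$ is violated. If a clause contains a variable together with its negation it is always satisfied and can be discarded. Otherwise its $k$ literals involve $k$ distinct variables, so $\mathbb{P}(A_i) = 2^{-k} =: p$.

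Each $A_i$ is determined by the variables of $C_i$. Because the variables are mutually independent, $A_i$ is independent of the $\sigma$-algebra generated by all events $A_j$ whose clauses share no variable with $C_i$. Each of the $k$ variables of $C_i$ occurs in at most $\frac{2^k}{ek}$ clauses, and one of these is $C_i$ itself. Hence $A_i$ depends on at most
\[
d := k\Bigl(\tfrac{2^k}{ek} - 1\Bigr) = \tfrac{2^k}{e} - k
\]
other events; more precisely, the integer part of this quantity bounds the outdegree. I will apply Corollary~\ref{corollary: 1/2} or Theorem~\ref{theorem: LLL - symmetric} with this $d$. The symmetric criterion reads $e\,2^{-k}(d+1) \le 1$, that is, $d+1 \le 2^k/e$. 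This holds because $d+1 \le 2^k/e - k + 1 \le 2^k/e$ for $k \ge 1$. The stronger condition $ep(d+\tfrac12)\le 1$ of Corollary~\ref{corollary: 1/2} is then also available, but it is not needed.

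The delicate point is the degenerate case. If $d < 1$ (small $k$, or clauses sharing no variables), the symmetric lemma as stated requires $d \ge 1$. In that case the events are mutually independent, and the probability that the formula is satisfied is $(1-2^{-k})^n > 0$. Otherwise Theorem~\ref{theorem: LLL - symmetric} gives satisfaction probability at least $\bigl(\frac{d}{d+1}\bigr)^n$.

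For the quantitative bound, the cleanest route is to go back to Theorem~\ref{theorem: LLL} with $x_i := e\,2^{-k}$ rather than $\frac{1}{d+1}$. This choice lies in $[0,1)$ for $k \ge 2$; the case $k=1$ forces at most $2/e<1$ clauses per variable, which means no clauses at all. I then need to check
\[
2^{-k} \le e2^{-k}\,(1-e2^{-k})^{d}, \qquad\text{i.e.}\qquad (1-e2^{-k})^d \ge e^{-1}.
\]
Writing $D := d+1 \le 2^k/e$ and $y := e2^{-k} \le 1/D$, this follows from $(1-y)^{D-1} \ge (1-\tfrac1D)^{D-1} \ge e^{-1}$. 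Then \eqref{eq1: LLL} yields $\prod_{i}(1-e2^{-k}) = (1-e2^{-k})^n$.

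This is slightly below $\exp(-en2^{-k})$, since $1-y \le e^{-y}$ points the wrong way. This gap is the main obstacle. To close it I would use $x_i = 2^{-k}/(1-y)^{d}$-type choices, or exploit the slack $-k$ in $d$. Concretely, choose $x := 1-e^{-y}$ with $y=e2^{-k}$, so that $\prod(1-x_i)=e^{-yn}$ exactly. It then remains to verify $2^{-k} \le (1-e^{-y})\,e^{-yd}$ with $yd \le 1 - ky$. This reduces to $2^{-k} e^{1-ky} \le 1-e^{-y}$, which I expect to follow from $1-e^{-y} \ge y - y^2/2$ and $e^{-ky} \le 1-ky+k^2y^2/2$, after routine checking for small $k$.
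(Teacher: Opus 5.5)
Your proposal is correct and, after the detour through $x_i = e2^{-k}$, it lands on exactly the paper's argument. You use the symmetric LLL for satisfiability, and then Theorem~\ref{theorem: LLL} with $x = 1-e^{-e2^{-k}}$, so that $\prod_i (1-x_i) = \exp(-en2^{-k})$. The final inequality you leave to ``routine checking'' is immediate, since $y e^{-ky} \le y e^{-y} \le 1-e^{-y}$, i.e.\ $y \le e^{y}-1$; this is precisely how the paper concludes with the cruder bound $d \le 2^k/e - 1$, so neither your extra slack $-k$ nor the Taylor estimates are needed.
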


\begin{proof}
Consider a $k$-SAT formula in which each variable appears in at most $\frac{2^k}{ek}$ clauses, and
choose a random assignment by setting each variable independently to $0$ or $1$, each with probability
$\tfrac{1}{2}$. For each clause $C$, let $A_C$ be the bad event that $C$ is not satisfied.
Since $C$ contains $k$ literals, all of which must evaluate to $0$ for $A_C$ to occur, we have
\begin{align}
p:= \mathbb{P}(A_C) = 2^{-k}.
\end{align}

\smallskip
\noindent
Each event $A_C$ is independent of the $\sigma$-algebra generated by all the remaining
bad events, except for those events whose clause shares a variable with $C$. Each of the
$k$ variables appearing in $C$ appears in at most $\frac{2^k}{ek}$ clauses. Hence, excluding
$C$, the number of clauses sharing at least one variable with $C$ is at most
\begin{align}
\label{eq1: 10.06.26}
d \leq k \cdot \frac{2^k}{ek} - 1 = \frac{2^k}{e} - 1.
\end{align}
Therefore, $A_C$ is independent of the $\sigma$-algebra generated by all but at most $d$ of
the remaining bad events. By Theorem~\ref{theorem: LLL - symmetric}, since $ep(d+1) \leq 1$,
it follows that with positive probability no bad event occurs.
This implies that there exists an assignment that satisfies all clauses.

Finally, we apply the asymmetric LLL (Theorem~\ref{theorem: LLL}) to derive the claimed lower bound on 
the probability that a uniformly random assignment satisfies a $k$-SAT formula with $n$ clauses.
Let $x := 1-e^{-a}$ with $a:=ep = e 2^{-k}$. Consider the dependency graph whose vertex set 
consists of all bad events, and where any two vertices are adjacent if and only if their corresponding 
clauses share at least one variable. Then, the degree of every vertex is at most 
$d \leq \frac{2^k}{e} - 1 = \frac1{a}-1$. Consequently,  
\begin{align}
\label{eq1: 23.06.2016}
p = 2^{-k} = \frac{a}{e} \leq \frac{e^a-1}{e} = (1-e^{-a}) \, e^{-1+a} = x(1-x)^{\frac{1}{a}-1} \leq x (1-x)^d.
\end{align}
Thus the hypotheses of the LLL are satisfied, and therefore 
\begin{align}
\label{eq2: 23.06.2016}
\mathbb{P}\left( \bigcap_{C} \overline{A_C} \right) \geq (1-x)^n = \exp(-en 2^{-k}).
\end{align}
\end{proof}

\subsection{Lower bounds on the diagonal Ramsey numbers}
\label{subsection: diagonal Ramsey numbers}

A fundamental principle of combinatorics is that complete disorder cannot persist in sufficiently large systems.
Ramsey theory provides a precise formulation of this phenomenon, and Ramsey numbers measure the threshold beyond
which prescribed structures must inevitably appear (see, e.g., \cite{LiLin22}).

\begin{definition}[Ramsey numbers]
\label{definition: Ramsey numbers}
{\em Let $k,\ell \geq 2$ be integers. The \emph{Ramsey number} $R(k,\ell)$ is the smallest integer $n \geq 2$
such that every $2$-coloring of the edges of the complete graph $K_n$ contains either a monochromatic copy of $K_k$
in the first color or a monochromatic copy of $K_\ell$ in the second color.
\begin{itemize}
\item The \emph{diagonal Ramsey numbers} are the numbers $R(k,k)$; equivalently, $R(k,k)$ is the smallest integer $n \geq 2$
such that every graph on $n$ vertices contains either a clique or an independent set on $k$ vertices.
\item The \emph{off-diagonal Ramsey numbers} are all the remaining numbers $R(k,\ell)$ with $k \neq \ell$.
\end{itemize}}
\end{definition}

This subsection revisits the lower bound on the diagonal Ramsey numbers derived in \cite{Spencer75},
relying on the symmetric version of the LLL (Theorem~\ref{theorem: LLL - symmetric}). Theorem~\ref{theorem: LB R(k,k) - ver3} restates
\cite[Theorem~2]{Spencer75}. A slight sharpening, based on the same argument, is presented in Theorem~\ref{theorem: LB R(k,k) - ver4},
and the identical asymptotic behavior of the two lower bounds is analyzed in Proposition~\ref{prop: ramsey-asymptotic lb},
which refines \cite[Corollary~1]{Spencer75}.

\begin{theorem} \label{theorem: LB R(k,k) - ver3}
{\em Let $k \geq 2$ be an integer. If
\begin{align}
\label{eq0: 30.05.26}
e \, \binom{k}{2} \, \binom{n-2}{k-2} \, 2^{1-\binom{k}{2}} \leq 1,
\end{align}
for some integer $n \geq 2$, then $R(k,k) > n$.}
\end{theorem}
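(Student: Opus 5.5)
We color each edge of $K_n$ independently and uniformly at random with one of two colors, and then apply the symmetric LLL (Theorem~\ref{theorem: LLL - symmetric}) to the events that some $k$-subset of vertices spans a monochromatic clique. For each $k$-subset $S \subseteq V(K_n)$, let $A_S$ be the event that all $\binom{k}{2}$ edges inside $S$ receive the same color. Then
\[
\mathbb{P}(A_S) = 2 \cdot 2^{-\binom{k}{2}} = 2^{1-\binom{k}{2}} =: p .
\]
If, with positive probability, no $A_S$ occurs, then some $2$-coloring of $K_n$ has no monochromatic $K_k$, and hence $R(k,k) > n$.

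Next we describe the dependency structure. Each $A_S$ is determined by the colors of the edges inside $S$. Because the edge colors are mutually independent, $A_S$ is independent of the $\sigma$-algebra generated by all events $A_T$ with $|S \cap T| \le 1$, since such $T$ share no edge with $S$. Hence the only events on which $A_S$ may depend are those $A_T$ with $T \ne S$ and $|S\cap T| \ge 2$. Every such $T$ contains at least one of the $\binom{k}{2}$ pairs $\{u,v\}\subseteq S$. For a fixed pair there are $\binom{n-2}{k-2}$ $k$-sets containing it, and one of these is $S$ itself. Summing over pairs therefore gives the bound on the number of dependencies
\[
d \le \binom{k}{2}\binom{n-2}{k-2} - 1 .
\]
Here we use a union over pairs, so a set $T$ meeting $S$ in several pairs is counted more than once; this only overcounts. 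We subtract $1$ only once, because $S$ is counted in every pair's list, so subtracting $1$ overall is certainly valid.

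With $d+1 \le \binom{k}{2}\binom{n-2}{k-2}$, the hypothesis \eqref{eq0: 30.05.26} gives exactly $ep(d+1)\le 1$. Theorem~\ref{theorem: LLL - symmetric} then yields $\mathbb{P}\bigl(\bigcap_S \overline{A_S}\bigr) > 0$, and the conclusion follows.

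A few edge cases need separate care. First, Theorem~\ref{theorem: LLL - symmetric} requires $d \ge 1$. If the true number of dependencies is $0$ (for instance $n=k$, where there is a single event), the events are mutually independent. In that case positivity holds as long as $p<1$, which is true for $k\ge 3$. We may also simply replace $d$ by $\max\{d,1\}$, since \eqref{eq0: 30.05.26} bounds $d+1$ by a quantity that is at least $2$ when $k\ge3$ and $n \ge k$. Second, if $n<k$ there are no events and the claim is trivial. Third, for $k=2$ the hypothesis reads $e\binom{n-2}{0}\cdot 1 \le 1$, which is false, so this case is vacuous. The main obstacle is just getting the dependency count right, in particular the $-1$ and the independence justification via disjoint edge sets; the rest is a direct substitution.
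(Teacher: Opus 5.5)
Your proposal is correct and follows the paper's proof essentially step by step. It uses the same random $2$-coloring, the same events $A_S$ with $p=2^{1-\binom{k}{2}}$, independence via disjoint edge sets, the same union-over-pairs count $d\le\binom{k}{2}\binom{n-2}{k-2}-1$, and the same application of Theorem~\ref{theorem: LLL - symmetric}. Your explicit handling of the edge cases ($n<k$, the vacuous case $k=2$, and ensuring the dependency parameter is at least $1$) fills in details that the paper leaves implicit.
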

\begin{proof}
Color the edges of the complete graph $K_n$ using two colors, uniformly at random
and independently. For a set $S$ of $k$ vertices, let $A_{S}$ be the event that
all edges with both endpoints in $S$ are monochromatic. Then,
$\mathbb{P}(A_S) = 2^{1-\binom{k}{2}}$.
Moreover, $A_S$ is independent of the $\sigma$-algebra generated by the events
\[
\bigl\{ A_T: \, T \subseteq [n], \ |T|=k, \ |T \cap S| \leq 1 \bigr\},
\]
since, whenever $|T \cap S| \leq 1$, the events $A_S$ and $A_T$ depend on disjoint
sets of independent edge-color variables.
Thus, the only events $A_T$, with $T \neq S$, that may depend on $A_S$ are those
for which $|T \cap S| \geq 2$. Their number is at most $\binom{k}{2} \, \binom{n-2}{k-2} - 1$.
Indeed, for each pair $\{i,j\}\subseteq S$, let
\[
F_{\{i,j\}} := \bigl\{ T \subseteq [n]: \; |T|=k, \; \{i,j\} \subseteq T \bigr\}.
\]
Then,
\[
|F_{\{i,j\}}| = \binom{n-2}{k-2}.
\]
Clearly,
\[
\bigl\{ T \subseteq [n], \; |T|=k, \; |T \cap S| \geq 2, \, T \neq S \bigr\}
= \left( \bigcup_{\{i,j\} \subseteq S} F_{\{i,j\}} \right) \setminus \{S\},
\]
so, by subadditivity of cardinality,
\begin{align*}
d: = \Bigl| \bigl\{ T \subseteq [n], \; |T|=k, \; |T \cap S| \geq 2, \,
T \neq S \bigr\} \Bigr| &\leq \sum_{\{i,j\} \subseteq S} |F_{\{i,j\}}| - 1 \\
&= \binom{k}{2} \, \binom{n-2}{k-2} - 1.
\end{align*}
Finally, by Theorem~\ref{theorem: LLL - symmetric} with
\begin{align}
\label{eq1: 30.05.26}
p = 2^{1-\binom{k}{2}}, \quad d' = \binom{k}{2} \, \binom{n-2}{k-2} - 1,
\end{align}
and $d \leq d'$, it follows that if $ep(d'+1) \leq 1$, or equivalently, if \eqref{eq0: 30.05.26} holds, then
\begin{align}
\label{eq1b: 30.05.26}
\mathbb{P}\left( \bigcap_{S \subseteq [n]: \; |S|=k} \overline{A_S} \right) > 0.
\end{align}
In other words, with positive probability, the random coloring of $K_n$ does not contain any monochromatic copy of $K_k$.
Hence $R(k,k) > n$, which proves Theorem~\ref{theorem: LB R(k,k) - ver3}.
\end{proof}

A variation of Theorem~\ref{theorem: LB R(k,k) - ver3}, which gives a slightly better lower bound on the diagonal
Ramsey numbers is given as follows.
\begin{theorem} \label{theorem: LB R(k,k) - ver4}
{\em Let $k \geq 2$ be an integer. If, for some integer $n \geq 2$,
\begin{align}
\label{eq2: 30.05.26}
e \, \Biggl[ \binom{n}{k} - \binom{n-k}{k} - k \, \binom{n-k}{k-1} \Biggr] \, 2^{1-\binom{k}{2}} \leq 1,
\end{align}
then $R(k,k) > n$.}
\end{theorem}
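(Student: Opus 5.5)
We follow the proof of Theorem~\ref{theorem: LB R(k,k) - ver3} almost verbatim; the only change is that the number of dependent events is counted exactly rather than through the subadditive bound. As before, we colour the edges of $K_n$ independently and uniformly at random with two colours. For each $k$-subset $S \subseteq [n]$, let $A_S$ be the event that $S$ spans a monochromatic clique, so that $\mathbb{P}(A_S) = 2^{1-\binom{k}{2}} =: p$. Whenever $|T \cap S| \leq 1$, the events $A_S$ and $A_T$ are determined by disjoint sets of independent edge variables. Hence $A_S$ is independent of the $\sigma$-algebra generated by $\{A_T : |T\cap S|\leq 1\}$, and the only events $A_S$ may depend on are those $A_T$ with $T \neq S$ and $|T\cap S|\geq 2$.

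\emph{Exact count.} We classify the $k$-subsets $T$ of $[n]$ by $|T\cap S|$. There are $\binom{n-k}{k}$ subsets with $T\cap S=\varnothing$. There are $k\binom{n-k}{k-1}$ subsets with $|T\cap S|=1$: choose the common vertex in $k$ ways and the remaining $k-1$ vertices from $[n]\setminus S$. Therefore the number of $k$-subsets $T\neq S$ with $|T\cap S|\geq 2$ is
\[
d := \binom{n}{k} - \binom{n-k}{k} - k\binom{n-k}{k-1} - 1 .
\]
The subtracted $1$ accounts for $T=S$ itself, since $|S\cap S| = k\geq 2$ and so $S$ is counted in the complement. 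This count is exact, whereas Theorem~\ref{theorem: LB R(k,k) - ver3} used the union bound $\binom{k}{2}\binom{n-2}{k-2}-1$. It therefore gives a dependency digraph with every outdegree at most $d$.

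\emph{Applying the LLL.} We apply Theorem~\ref{theorem: LLL - symmetric} with this $p$ and $d$. Its condition $ep(d+1)\leq 1$ is exactly \eqref{eq2: 30.05.26}. It then yields $\mathbb{P}\bigl(\bigcap_S \overline{A_S}\bigr)>0$, so some colouring of $K_n$ has no monochromatic $K_k$, and hence $R(k,k)>n$.

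\emph{Edge cases.} The main point needing care is the hypothesis $d\geq 1$ of Theorem~\ref{theorem: LLL - symmetric}.
\begin{itemize}
\item If $d=0$, each $A_S$ is independent of the $\sigma$-algebra generated by all the other events, so the events are mutually independent. Then $\mathbb{P}\bigl(\bigcap_S\overline{A_S}\bigr) \geq (1-p)^{\binom{n}{k}}$, which is positive provided $p<1$.
\item The case $k=2$ gives $p=1$. There \eqref{eq2: 30.05.26} reads $e\binom{n}{2}\leq 1$, since $\binom{n-2}{2}+2(n-2)=\binom{n}{2}-1$. This fails for every $n\geq 2$, so the statement is vacuous.
\item For $k\geq 3$ we have $p<1$, and the mutually independent case $d=0$ is covered as above.
\item If $n<k$, there are no events at all and $R(k,k)>n$ holds trivially.
\end{itemize}
All binomial coefficients are interpreted with the convention $\binom{a}{b}=0$ when $b>a$.

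Finally, to confirm that this is a sharpening of Theorem~\ref{theorem: LB R(k,k) - ver3}, note that the exact count never exceeds the union bound, so the new $d$ is at most the old $d'$. Thus \eqref{eq0: 30.05.26} implies \eqref{eq2: 30.05.26}.
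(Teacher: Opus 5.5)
Your proof is correct and follows the paper's argument: the same random coloring and events as in Theorem~\ref{theorem: LB R(k,k) - ver3}, the exact count $d=\binom{n}{k}-\binom{n-k}{k}-k\binom{n-k}{k-1}-1$ by complementary counting, and Theorem~\ref{theorem: LLL - symmetric} with $ep(d+1)\leq 1$. Your separate handling of the degenerate case $d=0$ (for $k\geq 3$ this happens only when $n=k$) covers a point the paper skips, since Theorem~\ref{theorem: LLL - symmetric} assumes $d\geq 1$. One small slip: for $k=2$ the bracket in \eqref{eq2: 30.05.26} equals $1$ by the very identity you cite, so the condition reads $e\leq 1$, not $e\binom{n}{2}\leq 1$; it still fails, so your conclusion that this case is vacuous stands.
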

\begin{proof}
The proof follows the same argument as that of Theorem~\ref{theorem: LB R(k,k) - ver3},
except that we evaluate the exact value of $d$ instead of using the upper bound $d'$ in
\eqref{eq1: 30.05.26}.

For an arbitrary $k$-subset $S \subseteq [n]$, $d$ is the (fixed) number of
$k$-subsets $T \subseteq [n]$, distinct from $S$, that satisfy $|T \cap S| \geq 2$.
Since there are $\binom{n}{k}$ $k$-subsets of $[n]$, of which $\binom{n-k}{k}$
are disjoint from $S$ and $k \, \binom{n-k}{k-1}$ intersect $S$ in exactly one
element, it follows that
\begin{align}
\label{eq3: 30.05.26}
d = \binom{n}{k} - \binom{n-k}{k} - k \, \binom{n-k}{k-1} - 1.
\end{align}
Thus, applying Theorem~\ref{theorem: LLL - symmetric} with
$p = 2^{1-\binom{k}{2}}$ (see \eqref{eq1: 30.05.26})
and $d$ given by \eqref{eq3: 30.05.26}, we conclude that if
$ep \, (d+1) \leq 1$, or equivalently,
if \eqref{eq2: 30.05.26} is satisfied, then \eqref{eq1b: 30.05.26} holds.
Therefore, with positive probability, the random coloring of $K_n$ contains no
monochromatic copy of $K_k$, and so $R(k,k)>n$.
\end{proof}

Numerical experiments comparing the lower bounds on the diagonal Ramsey numbers $R(k,k)$ given in
Theorems~\ref{theorem: LB R(k,k) - ver3} and~\ref{theorem: LB R(k,k) - ver4} suggest that the
fractional improvement provided by Theorem~\ref{theorem: LB R(k,k) - ver4} decreases with $k$,
and tends to zero as $k \to \infty$.

The next result, which follows from Theorem~\ref{theorem: LB R(k,k) - ver3},
refines the asymptotic lower bound on the diagonal Ramsey numbers in \cite[Corollary~1]{Spencer75}.
\begin{proposition}  \label{prop: ramsey-asymptotic lb}
{\em For every $\varepsilon \in (0,1)$, there exists
$k_0 = k_0(\varepsilon) \in \mathbb{N}$ such that
\begin{align}
\label{eq: ramsey-asymptotic lb}
R(k,k) > (1- \varepsilon) \; \Biggl(\frac{\sqrt{2}}{e}\Biggr) \; k 2^{\frac{k}{2}}, \quad \forall \, k \geq k_0.
\end{align}}
\end{proposition}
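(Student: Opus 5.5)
Given $\varepsilon\in(0,1)$, I would set
\[
n = n(k) := \Bigl\lfloor (1-\varepsilon)\,\frac{\sqrt2}{e}\,k\,2^{k/2}\Bigr\rfloor .
\]
The goal is to show that this $n$ satisfies condition \eqref{eq0: 30.05.26} of Theorem~\ref{theorem: LB R(k,k) - ver3} for all sufficiently large $k$. That theorem then gives $R(k,k)>n$. If $R(k,k)>n$ held only with $n$ equal to the floor, this would still suffice, because $R(k,k)$ is an integer: $R(k,k)\ge n+1 > (1-\varepsilon)\frac{\sqrt2}{e}k2^{k/2}$. So everything reduces to an asymptotic estimate of the left-hand side of \eqref{eq0: 30.05.26}.

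\textbf{Estimating the binomial.} I would bound
\[
\binom{n-2}{k-2}\le \frac{n^{k-2}}{(k-2)!},
\]
and then apply Stirling's lower bound $(k-2)!\ge \sqrt{2\pi(k-2)}\,\bigl((k-2)/e\bigr)^{k-2}$. Substituting, the left-hand side $L_k$ of \eqref{eq0: 30.05.26} is at most
\[
e\cdot\frac{k^2}{2}\cdot 2^{1-\binom k2}\cdot\frac{1}{\sqrt{2\pi(k-2)}}\Bigl(\frac{en}{k-2}\Bigr)^{k-2}.
\]
Now insert $n\le (1-\varepsilon)\frac{\sqrt2}{e}k2^{k/2}$. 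This gives
\[
\frac{en}{k-2}\le (1-\varepsilon)\sqrt2\,\frac{k}{k-2}\,2^{k/2},
\]
so
\[
\Bigl(\frac{en}{k-2}\Bigr)^{k-2}\le (1-\varepsilon)^{k-2}\,2^{(k-2)/2}\,2^{k(k-2)/2}\Bigl(1+\frac{2}{k-2}\Bigr)^{k-2}.
\]
The last factor is at most $e^2$.

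\textbf{Collecting powers of $2$.} The exponents combine as
\[
1-\frac{k(k-1)}{2}+\frac{k-2}{2}+\frac{k(k-2)}{2} = 1-\frac{k}{2}+\frac{k-2}{2} = 0 .
\]
Hence $L_k\le C\,k^{3/2}(1-\varepsilon)^{k-2}$ for an absolute constant $C$ (roughly $e^3/(2\sqrt{2\pi})$ up to the factor $\sqrt{k/(k-2)}$). The factor $(1-\varepsilon)^{k}$ decays exponentially while $k^{3/2}$ grows only polynomially, so $L_k\to 0$. In particular there is a $k_0(\varepsilon)$ with $L_k\le 1$ for all $k\ge k_0$, and Theorem~\ref{theorem: LB R(k,k) - ver3} applies. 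One should also check that $n\ge 2$ (indeed $n\ge k$) for large $k$, which is clear.

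\textbf{Main obstacle.} The delicate step is the exponent bookkeeping. The powers of $2$ must cancel exactly, and this is precisely what forces the constant $\sqrt2/e$: with any larger constant, a factor $c^{k}$ with $c>1$ would survive. Everything else (Stirling, $(1+2/(k-2))^{k-2}\le e^2$, and the polynomial factors) only affects $k_0$. I would carry out this exponent computation carefully first.
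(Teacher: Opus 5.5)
Your proposal is correct and follows the paper's proof essentially step for step. It uses the same choice $n=\lfloor (1-\varepsilon)\frac{\sqrt2}{e}\,k\,2^{k/2}\rfloor$, the bound $\binom{n-2}{k-2}\le n^{k-2}/(k-2)!$, Stirling's lower bound on $(k-2)!$, the estimate $\bigl(\frac{k}{k-2}\bigr)^{k-2}\le e^2$, the exact cancellation of the powers of $2$ to reach $C\,k^{3/2}(1-\varepsilon)^{k-2}$, and then Theorem~\ref{theorem: LB R(k,k) - ver3} together with the integrality of $R(k,k)$. The paper's explicit constant $\sqrt{3/(8\pi)}\,e^3$ (valid for $k\ge 3$) is your constant after bounding $\sqrt{k/(k-2)}\le\sqrt3$, and your check that $n\ge 2$ makes explicit a point the paper leaves implicit.
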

\begin{remark}
\label{remark: k_0}
{\em An explicit closed-form expression for a valid $k_0 = k_0(\varepsilon)$ is derived in the proof
of Proposition~\ref{prop: ramsey-asymptotic lb} (see \eqref{eq: k_0}), and its behavior is then analyzed
as $\varepsilon \to 0^+$ (see \eqref{eq: approximated k_0}).}
\end{remark}
\begin{proof}
Let $c_\varepsilon := (1-\varepsilon)\,\frac{\sqrt{2}}{e}$.
We show that there exists $k_0=k_0(\varepsilon)\in \mathbb{N}$ such that
\[
R(k,k) > c_\varepsilon \, k \, 2^{k/2}, \qquad \forall \, k \geq k_0.
\]

\noindent
Fix $n := \bigl\lfloor c_\varepsilon \, k \, 2^{k/2} \bigr\rfloor$.
By Theorem~\ref{theorem: LB R(k,k) - ver3}, it suffices to show that
inequality \eqref{eq0: 30.05.26} holds for all sufficiently large $k$.
Since $n \leq c_\varepsilon k 2^{k/2}$ and
$\binom{n-2}{k-2}\leq \frac{n^{k-2}}{(k-2)!}$, we get
\begin{align}
\label{eq1: 01.06.26}
e \, \binom{k}{2} \, \binom{n-2}{k-2} \, 2^{1-\binom{k}{2}}
\leq e \, \binom{k}{2} \; \frac{(c_\varepsilon k)^{k-2}}{(k-2)!} \; 2^{1-k/2},
\end{align}
By Stirling's inequality, asserting that $n! \geq \sqrt{2\pi n} \, (n/e)^n$ for all $n \in \mathbb{N}$,
together with the inequality $\left(\frac{k}{k-2}\right)^{k-2} \leq e^2$ that holds for all $k \geq 3$,
it follows from \eqref{eq1: 01.06.26} that
\begin{align}
\label{eq1: 12.03.26}
e \, \binom{k}{2} \, \binom{n-2}{k-2} \, 2^{1-\binom{k}{2}}
\leq \sqrt{\frac{3}{8\pi}} \, e^3 \,k^{3/2} \, (1-\varepsilon)^{k-2},
\qquad \forall \, k \geq 3.
\end{align}
Since $\varepsilon \in (0,1)$, the right-hand side of \eqref{eq1: 12.03.26} tends
to zero as $k\to\infty$.
Consequently, there exists $k_0=k_0(\varepsilon) \in \mathbb{N}$ such that
\eqref{eq0: 30.05.26} holds for all $k \geq k_0$.
By Theorem~\ref{theorem: LB R(k,k) - ver3},
$R(k,k) \geq n+1$ for all $k \geq k_0$, and therefore
\begin{align*}
R(k,k) &> c_\varepsilon \, k \, 2^{k/2},
\qquad \forall \, k \geq k_0,
\end{align*}
which proves~\eqref{eq: ramsey-asymptotic lb} by the expression for $c_\varepsilon$.
\end{proof}

The proof of Proposition~\ref{prop: ramsey-asymptotic lb} enables one to get an
explicit closed-form expression for a suitable choice of $k_0 = k_0(\varepsilon)$.
Let $\varepsilon \in (0,1)$. From \eqref{eq1: 12.03.26}, together with
the requirement to satisfy \eqref{eq0: 30.05.26} for all $k \geq k_0$, it follows
that $k_0$ can be selected to be the smallest integer $k \geq 3$ that satisfies the
inequality
\begin{align}
\label{eq3: 12.03.26}
\sqrt{\frac{3}{8\pi}} \, e^3 \, k^{3/2} \, (1-\varepsilon)^{k-2} \leq 1.
\end{align}
A suitable choice of $k_0 = k_0(\varepsilon)$ is obtained by solving inequality
\eqref{eq3: 12.03.26} subject to the constraint $k \geq 3$, as detailed in Appendix~\ref{appendix: W}.
This gives
\begin{align}
\label{eq: k_0}
k_0 &= \max \Biggl\{3, \, \Biggl\lceil \frac{3 W_{-1}\bigl(\beta \ln(1-\varepsilon)
\, (1-\varepsilon)^{4/3} \bigr)}{2\ln(1-\varepsilon)} \Biggr\rceil \Biggr\},
\end{align}
where $W_{-1}(\cdot)$ denotes the secondary branch of the Lambert $W$-function, and
\begin{align}
\label{eq: beta}
\beta &= \frac{4}{3 e^2} \sqrt[3]{\frac{\pi}{3}} \approx 0.183242.
\end{align}
We have
\begin{align*}
& W_{-1}(x) = \ln(-x) - \ln\bigl(-\ln(-x)\bigr) + o(1) \qquad \text{as } x \to 0^-, \\
& \ln(1-x) = -x + O(x^2) \qquad \text{as } x \to 0,
\end{align*}
which, applied to the right-hand side of \eqref{eq: k_0}, yield
\begin{align}
\label{eq: approximated k_0}
k_0(\varepsilon) & \approx \frac{3}{2 \varepsilon} \, \biggl[ \ln \biggl( \frac{1}{\beta \varepsilon} \biggr) +
\ln \ln \biggl( \frac{1}{\beta \varepsilon} \biggr) \biggr] \\[0.1cm]
\label{eq2: approximated k_0}
& = \Theta \biggl( \frac1{\varepsilon} \, \ln \frac1{\varepsilon} \biggr).
\end{align}

We close this subsection by noting that the lower bounds on the diagonal Ramsey numbers $R(k,k)$ given in
Theorems~\ref{theorem: LB R(k,k) - ver3} and~\ref{theorem: LB R(k,k) - ver4} are well below the state-of-the-art
bounds for small values of~$k$. For example, the resulting lower bounds on $R(10,10)$ are~99 and~105, respectively,
whereas the current best lower bound is~798, due to \cite{Mathon87} (see also \cite[Table~1a]{Radziszowski2026}).
The significance of the LLL-based analysis in this subsection lies instead in Proposition~\ref{prop: ramsey-asymptotic lb}
and \eqref{eq: k_0}--\eqref{eq2: approximated k_0}, which provide a refinement of the LLL-based analysis underlying
the asymptotically best known lower bound on $R(k,k)$ due to \cite[Corollary~1]{Spencer75}.

\subsection{Lower bounds on the off-diagonal Ramsey numbers}
\label{subsection: off-diagonal Ramsey numbers}

As a continuation of Section~\ref{subsection: diagonal Ramsey numbers}, we now apply the asymmetric LLL
(Theorem~\ref{theorem: LLL}) to derive lower bounds on off-diagonal Ramsey numbers. This extends
the use of the symmetric LLL in the preceding subsection to derive lower bounds on
diagonal Ramsey numbers. This presentation follows Spencer's analysis in \cite{Spencer75,Spencer77}.

\begin{theorem}[Lower bound on off-diagonal Ramsey numbers]
\label{thm: Spencer - off-diagonal Ramsey}
{\em Let $k, \ell \geq 2$ be distinct integers, and define $a:=\binom{k}{2}$ and $b:=\binom{\ell}{2}$.
If, for some integer $n \geq 2$, there exist numbers $p, x, y \in (0,1)$ such that
\begin{align}
\label{eq2a: 01.06.26}
& p^{a} \leq x \, (1-x)^{\varepsilon} \, (1-y)^{\vartheta}, \\[0.1cm]
\label{eq2b: 01.06.26}
& (1-p)^b \leq y \, (1-x)^{\varphi} \, (1-y)^{\, \rho},
\end{align}
where
\begin{align}
\label{eq2c: 01.06.26}
\varepsilon := \binom{k}{2} \, \binom{n-2}{k-2}, \quad \vartheta := \binom{k}{2} \, \binom{n-2}{\ell-2}, 
\quad \varphi := \binom{\ell}{2} \, \binom{n-2}{k-2}, \quad \rho := \binom{\ell}{2} \, \binom{n-2}{\ell-2}.
\end{align}
Then, $R(k,\ell)>n$.}
\end{theorem}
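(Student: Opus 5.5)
The plan is to apply the asymmetric LLL (Theorem~\ref{theorem: LLL}) to a biased random coloring of $K_n$. Color each edge independently, red with probability $p$ and blue with probability $1-p$. For every $k$-subset $S\subseteq[n]$, let $A_S$ be the event that all $a$ edges inside $S$ are red. For every $\ell$-subset $T\subseteq[n]$, let $B_T$ be the event that all $b$ edges inside $T$ are blue. Then
\[
\mathbb{P}(A_S)=p^{a}, \qquad \mathbb{P}(B_T)=(1-p)^{b}.
\]
If with positive probability none of these events occurs, the coloring has no red $K_k$ and no blue $K_\ell$, and hence $R(k,\ell)>n$.

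Next, build the dependency digraph exactly as in the proof of Theorem~\ref{theorem: LB R(k,k) - ver3}. Each event is determined by the colors of the edges inside its vertex set, and the edge colors are mutually independent. So an event indexed by $U$ is independent of the $\sigma$-algebra generated by all events indexed by sets $U'$ with $|U\cap U'|\le 1$, because those events depend on disjoint collections of edge variables. We therefore draw an arc from $U$ to $U'$ only when $|U\cap U'|\ge 2$ and $U'\neq U$.

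We then count neighbors by type, using the covering argument with the families $F_{\{i,j\}}$. For a $k$-set $S$, every neighboring $k$-set contains some pair $\{i,j\}\subseteq S$, so there are at most $\binom{k}{2}\binom{n-2}{k-2}=\varepsilon$ of them. By the same argument, $S$ has at most $\binom{k}{2}\binom{n-2}{\ell-2}=\vartheta$ neighboring $\ell$-sets. Symmetrically, an $\ell$-set $T$ has at most $\varphi$ neighboring $k$-sets and at most $\rho$ neighboring $\ell$-sets. These are upper bounds, which suffices: we can add dummy arcs as in Remark~\ref{remark: non-unique dependency digraph}, or simply use monotonicity of $(1-x)^{m}$ in $m$.

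Now set $x_S:=x$ for every $k$-set and $x_T:=y$ for every $\ell$-set. Since $1-x,1-y\in(0,1)$, for each $S$ we get
\[
\prod_{(S,U)\in E}(1-x_U)\ge (1-x)^{\varepsilon}(1-y)^{\vartheta}.
\]
Condition \eqref{eq2a: 01.06.26} therefore gives $\mathbb{P}(A_S)\le x_S\prod_{(S,U)\in E}(1-x_U)$. In the same way, \eqref{eq2b: 01.06.26} gives the required inequality for every $B_T$. Theorem~\ref{theorem: LLL} then yields
\[
\mathbb{P}\Bigl(\bigcap\overline{A_S}\cap\bigcap\overline{B_T}\Bigr)\ge(1-x)^{\binom nk}(1-y)^{\binom n\ell}>0,
\]
which finishes the argument.

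The main point needing care is the neighbor count. Since $k\neq\ell$, sets of different sizes are never equal, so no ``$-1$'' correction arises for the cross terms, and omitting it for the same-type terms only makes the bound looser. One also has to check that the independence claim covers the whole generated $\sigma$-algebra and not merely pairwise independence; this follows from the disjointness of the underlying independent edge variables.
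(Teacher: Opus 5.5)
Your proposal is correct and matches the paper's proof: a $p$-biased independent edge coloring, monochromatic-clique events $A_S$ and $B_T$, adjacency when the vertex sets share at least two vertices, neighbor counts bounded by $\varepsilon,\vartheta,\varphi,\rho$, and the asymmetric LLL with weights $x$ and $y$. The differences are cosmetic: you swap the color names, and you justify the $\sigma$-algebra independence and the monotonicity of $(1-x)^m$ in $m$ more explicitly than the paper does.
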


\begin{proof}
Color the edges of the complete graph $K_n$ independently at random, assigning each edge the color blue
with probability $p$ and the color red with probability $1-p$. For a set $S \subseteq [n]$ with $|S|=k$,
let $A_{S}$ be the event that all edges with both endpoints in $S$ are colored blue; likewise, for a set
$T \subseteq [n]$ with $|T|=\ell$, let $B_{T}$ be the event that all edges with both endpoints in $T$
are colored red. Then,  $\mathbb{P}(A_S) = p^a$ and $\mathbb{P}(B_T) = (1-p)^b$. Any two of these events
are independent whenever their corresponding sets share at most one element (i.e., their corresponding
cliques share no edge).

Let $x,y \in (0,1)$, and set $x_{A_S} = x$ and $x_{B_T} = y$ for all such subsets $S, T \in [n]$ with
$|S|=k$ and $|T|=\ell$.
Consider an associated dependency graph, whose vertex set is the above family of events, and where any
two of them are adjacent if and only if they intersect in at least two elements.
\begin{enumerate}[(1)]
\item
The total number of edges linking $A_S$ with any of the events in $\{A_{S'}\}$, where $S' \in [n]$,
$|S'| = k$, and $|S \cap S'| \geq 2$, is at most $\varepsilon$. Likewise, the total number of edges linking
$A_S$ with any of the events in $\{B_{T'}\}$, where $T' \in [n], |S \cap T'| \geq 2$, and $|T'| = \ell$,
is at most $\vartheta$.
\item
The total number of edges linking $B_T$ with any of the events in $\{A_{S'}\}$, where $S' \in [n]$,
$|S'| = k$, and $|T \cap S'| \geq 2$, is at most $\varphi$. Likewise, the total number of edges
linking $B_T$ with any of the events in $\{B_{T'}\}$, where $T' \in [n], |T \cap T'| \geq 2$, and
$|T'| = \ell$, is at most $\rho$.
\end{enumerate}
By the LLL (Theorem~\ref{theorem: LLL}), it follows that if the conditions in \eqref{eq2a: 01.06.26}
and \eqref{eq2b: 01.06.26} hold, then
\begin{align}
\label{eq2f: 01.06.26}
\mathbb{P}\left( \bigcap_{S \subseteq [n]: \, |S|=k} \overline{A_S} \; \cap \; \bigcap_{T \subseteq [n]: \, |T|=\ell} \overline{B_T} \right) > 0.
\end{align}
This implies that, with positive probability, the complete graph $K_n$ contains neither a blue copy of $K_k$ nor a red copy of $K_\ell$,
which implies that $R(k, \ell) > n$.
\end{proof}

In analogy with the tightened lower bound for diagonal Ramsey numbers in Theorem~\ref{theorem: LB R(k,k) - ver4},
we next present a tightened version of Theorem~\ref{thm: Spencer - off-diagonal Ramsey}. As in the proof of
Theorem~\ref{theorem: LB R(k,k) - ver4}, the improvement is obtained by replacing the upper bounds on the numbers
of edges in each of the four cases in the same dependency graph with their exact values.

\begin{theorem}[Lower bound on off-diagonal Ramsey numbers]
\phantomsection
\label{theorem: IS - off-diagonal Ramsey}
{\em Let $k, \ell \geq 2$ be distinct integers, and define $a:=\binom{k}{2}$ and $b:=\binom{\ell}{2}$.
If, for some integer $n \geq 2$, there exist numbers $p, x, y \in (0,1)$ such that
\begin{align}
\label{eq2a: 02.06.26}
& p^{a} \leq x \, (1-x)^{\varepsilon'} \, (1-y)^{\vartheta'}, \\[0.1cm]
\label{eq2b: 02.06.26}
& (1-p)^b \leq y \, (1-x)^{\varphi'} \, (1-y)^{\, \rho'},
\end{align}
where
\begin{align}
\label{eq2c: 02.06.26}
& \varepsilon' := \binom{n}{k} - \binom{n-k}{k} - k \, \binom{n-k}{k-1} - 1,
\qquad \vartheta' := \binom{n}{\ell} - \binom{n-k}{\ell} - k \, \binom{n-k}{\ell-1}, \\[0.1cm]
\label{eq2d: 02.06.26}
& \varphi' := \binom{n}{k} - \binom{n-\ell}{k} - \ell \, \binom{n-\ell}{k-1},
\qquad \hspace*{0.5cm} \rho' := \binom{n}{\ell} - \binom{n-\ell}{\ell} - \ell \, \binom{n-\ell}{\ell-1} - 1.
\end{align}
Then, $R(k,\ell)>n$.}
\end{theorem}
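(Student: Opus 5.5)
The plan is to rerun the proof of Theorem~\ref{thm: Spencer - off-diagonal Ramsey} verbatim. We color each edge of $K_n$ blue with probability $p$ and red with probability $1-p$, independently. For every $k$-subset $S$ we take the bad event $A_S$ (all edges inside $S$ are blue), and for every $\ell$-subset $T$ we take $B_T$ (all edges inside $T$ are red). These satisfy $\mathbb{P}(A_S)=p^a$ and $\mathbb{P}(B_T)=(1-p)^b$. We use the same dependency graph as before: two events are adjacent if and only if their underlying vertex sets share at least two vertices. We assign $x_{A_S}=x$ and $x_{B_T}=y$. The only change is that the four neighbor-count upper bounds $\varepsilon,\vartheta,\varphi,\rho$ are replaced by exact counts.

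\textbf{Validity of the dependency graph.} Each event is determined by the colors of the edges inside its vertex set. A family of events whose vertex sets each meet the given set in at most one vertex is therefore determined by edge variables disjoint from those of the given event. By independence of the edge colors, the given event is independent of the $\sigma$-algebra generated by that family, so Definition~\ref{definition: Dependency digraph} is satisfied.

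\textbf{Exact neighbor counts.} This is the only new ingredient, and it follows the complementary-counting argument of Theorem~\ref{theorem: LB R(k,k) - ver4}.
\begin{enumerate}[(1)]
\item Fix a $k$-set $S$. The number of $k$-sets $S'\neq S$ with $|S\cap S'|\ge 2$ is $\binom{n}{k}$ minus those disjoint from $S$, namely $\binom{n-k}{k}$, minus those meeting $S$ in exactly one vertex, namely $k\binom{n-k}{k-1}$, minus $1$ for $S$ itself. This gives $\varepsilon'$.
\item The number of $\ell$-sets $T'$ with $|S\cap T'|\ge2$ is $\binom{n}{\ell}-\binom{n-k}{\ell}-k\binom{n-k}{\ell-1}$. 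No ``$-1$'' appears here, because $T'$ is a different kind of event from $S$ even if $k=\ell$ were allowed; here $k\neq\ell$ anyway. This gives $\vartheta'$.
\item Symmetrically, for a fixed $\ell$-set $T$, the number of $k$-sets meeting $T$ in at least two vertices is $\varphi'$.
\item The number of $\ell$-sets $T'\neq T$ meeting $T$ in at least two vertices is $\rho'$.
\end{enumerate}

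\textbf{Applying the LLL.} With these counts, the right-hand side of condition~\eqref{eq1: condition} for $A_S$ is exactly $x(1-x)^{\varepsilon'}(1-y)^{\vartheta'}$. For $B_T$ it is exactly $y(1-x)^{\varphi'}(1-y)^{\rho'}$. Hypotheses \eqref{eq2a: 02.06.26} and \eqref{eq2b: 02.06.26} are therefore precisely the LLL conditions, and Theorem~\ref{theorem: LLL} yields \eqref{eq2f: 01.06.26}. Hence some coloring has neither a blue $K_k$ nor a red $K_\ell$, and $R(k,\ell)>n$.

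\textbf{Main obstacle.} The only real care is the bookkeeping in the counts: recognizing that exactly the same-type counts $\varepsilon'$ and $\rho'$ need the ``$-1$''. This is also where I would double-check the edge cases. When $n<2k$, the convention $\binom{m}{j}=0$ for $j>m\ge0$ must still make the subtraction formula correct. When $n-k<0$, the binomials must be interpreted as $0$. When $n<k$ or $n<\ell$, some families are empty and the counts degenerate to $0$ or negative values; in that case I would note that if $n<\max(k,\ell)$ the conclusion is trivial or handle it by that convention.
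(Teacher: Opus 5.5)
Your proposal is correct and follows the paper's approach: you rerun the proof of Theorem~\ref{thm: Spencer - off-diagonal Ramsey} on the same dependency graph and replace the four bounds $\varepsilon,\vartheta,\varphi,\rho$ by exact complementary counts, with the $-1$ appearing only in the same-type counts $\varepsilon'$ and $\rho'$. Your handling of the degenerate case $n<\max(k,\ell)$ goes beyond the paper, which does not address it: there the conclusion holds trivially by coloring every edge blue if $n<k$, or red if $n<\ell$, so all binomials may be taken with nonnegative upper index.
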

\begin{remark}
{\em Following the proof of Theorem~\ref{thm: Spencer - off-diagonal Ramsey}, the subtraction of~1 in the definitions 
of $\varepsilon'$ and $\rho'$ in \eqref{eq2c: 02.06.26} and \eqref{eq2d: 02.06.26}, respectively, accounts for 
the fact that, in the corresponding same-type counts, the event itself must be excluded. In contrast, no subtraction 
of~1 appears in the mixed counts corresponding to $\vartheta'$ and $\varphi'$, since the events $A_S$ and $B_T$ 
are distinct for all $S$ and $T$ with $|S|=k$ and $|T|=\ell$ (even when $S=T$ and $k=\ell$).}
\end{remark}

The next asymptotic lower bound on off-diagonal Ramsey numbers is a consequence of
Theorem~\ref{thm: Spencer - off-diagonal Ramsey} (see \cite[Theorem~2.2]{Spencer77} for a proof).
\begin{theorem}[Asymptotics]
\label{thm: Spencer - off-diagonal Ramsey - asymptotic}
{\em For every fixed integer $k \geq 3$, there exists a constant $c_k>0$ such that
\begin{align}
\label{eq:LLL - off-diagonal Ramsey - asymptotic}
R(k,\ell) \geq c_k \left( \frac{\ell}{\log \ell} \right)^{\frac{k+1}{2}}
\end{align}
for all sufficiently large $\ell$.}
\end{theorem}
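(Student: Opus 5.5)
The plan is to derive the statement from Theorem~\ref{thm: Spencer - off-diagonal Ramsey}. We choose $p$, $x$, $y$ and $n$ as functions of $\ell$ (with $k$ fixed) so that \eqref{eq2a: 01.06.26} and \eqref{eq2b: 01.06.26} hold for all large $\ell$. We take $n = c\,(\ell/\log\ell)^{(k+1)/2}$ with a small constant $c=c_k$. The edge probability is $p = c_1 n^{-2/(k+1)}$, so that $p$ is of order $\log\ell/\ell$ with a constant to be tuned. For the LLL weights we take $x = c_2 n^{-2k/(k+1)}$, roughly comparable to $p^a$ up to a constant, and $y = e^{-c_3 n^{2/(k+1)}\log^2 n}$, an exponentially small weight. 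Since the parameters $\varepsilon,\vartheta,\varphi,\rho$ in \eqref{eq2c: 01.06.26} only grow with the counts, it suffices to use the crude bounds $\varepsilon\le k^2 n^{k-2}$, $\varphi\le \ell^2 n^{k-2}$, $\vartheta\le k^2\binom{n}{\ell}$ and $\rho\le\binom{n}{\ell}$.

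First I check \eqref{eq2a: 01.06.26}. We have $(1-x)^{\varepsilon}\ge e^{-2x\varepsilon}$, and $x\varepsilon = O(n^{-2k/(k+1)}n^{k-2})$. Since $\binom{k}{2}\cdot\frac{2}{k+1}\ge 1$ for $k\ge3$, the quantity $p^a$ is of order $n^{-k(k-1)/(k+1)}$. I will instead calibrate $x$ so that $x\varepsilon = O(1)$ while $p^a\le x/e$; this requires $n^{k-2}p^a=O(1)$, which holds for our $p$ after adjusting constants. The factor $(1-y)^{\vartheta}\ge \exp(-2y\binom{n}{\ell})$ is handled by showing $y\binom{n}{\ell}\to0$. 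This needs $\ell\log n \ll c_3 n^{2/(k+1)}\log^2 n$, and since $n^{2/(k+1)}\asymp \ell/\log\ell$, it holds once $c_3$ is large.

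Second I check \eqref{eq2b: 01.06.26}. Its left side satisfies $(1-p)^b\le e^{-p\ell^2/2}$. The right side is at least $y\,\exp(-2x\varphi-2y\rho)$. As before $y\rho\to0$, and $x\varphi = O(\ell^2 n^{k-2}x) = O(\ell^2 n^{-2/(k+1)}\cdot n^{k-2-2k/(k+1)+2/(k+1)}\ldots)$. I would simply compute $x\varphi \asymp \ell^2 p\cdot(\text{small constant})$, because $x n^{k-2}\asymp p^a n^{k-2}\asymp p\cdot p^{a-1}n^{k-2}$, and $p^{a-1}n^{k-2}=O(1)$ when $p\asymp n^{-2/(k+1)}$ roughly. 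Hence the requirement reduces to
\[
p\ell^2/2 \;\ge\; c_3 n^{2/(k+1)}\log^2 n + C\,c_2\,p\ell^2 .
\]
With $p\asymp n^{-2/(k+1)}\cdot(\text{const})$ and $\ell\asymp n^{2/(k+1)}\log n$ up to constants, both terms on the right scale like $p\ell^2$. So the inequality holds after choosing $c_2$ small and the constant in $\ell$ versus $n$ (i.e.\ $c$) appropriately.

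The main obstacle is the exponent bookkeeping: making $p$, $x$, $y$ and $n$ simultaneously consistent. The product $x\varphi$ must stay below a small multiple of $p\ell^2$, while $p^a\le x(1-x)^\varepsilon$ forces $x\gtrsim p^a$. These two demands together dictate the exponent $(k+1)/2$ and the logarithmic loss. I would first set all quantities as powers of $n$ times $\log$ factors, solve the resulting linear constraints on the exponents, and only then fix the constants $c_1,c_2,c_3,c$. The claimed bound then follows from Theorem~\ref{thm: Spencer - off-diagonal Ramsey}, which gives $R(k,\ell)>n$.
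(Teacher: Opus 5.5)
Your route is the right one, and it is the proof the paper defers to (\cite[Theorem~2.2]{Spencer77}): plug $p\asymp n^{-2/(k+1)}$, $\ell\asymp n^{2/(k+1)}\log n$, $x\asymp p^a$ and $y=\exp(-c_3n^{2/(k+1)}\log^2n)$ into Theorem~\ref{thm: Spencer - off-diagonal Ramsey}. But the whole proof is the constant bookkeeping, and yours has a genuine gap there.

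First, the stated weight $x=c_2n^{-2k/(k+1)}$ agrees with $p^a\asymp n^{-k(k-1)/(k+1)}$ only for $k=3$. For $k\ge4$ it gives $x\varepsilon\asymp n^{(k^2-3k-2)/(k+1)}\to\infty$. Then $(1-x)^{\varepsilon}$ decays faster than any power of $n$, while $p^a/x$ decays only polynomially, so \eqref{eq2a: 01.06.26} fails. Take $x=2p^a$ from the start. Since $2(a-1)=(k-2)(k+1)$, you have exactly $p^{a-1}n^{k-2}=c_1^{a-1}$, and hence $x\varepsilon\le k^2c_1^an^{-2/(k+1)}\to0$.

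Second, and more seriously, you cannot make $x\varphi$ a small multiple of $p\ell^2$ by ``choosing $c_2$ small''. Condition \eqref{eq2a: 01.06.26} forces $x\ge p^a$, hence
\[
x\varphi\;\ge\;p^a\binom{\ell}{2}\binom{n-2}{k-2}\;\sim\;\frac{c_1^{a-1}}{2(k-2)!}\,p\ell^2 .
\]
This is comparable to the exponent $pb\sim\frac12 p\ell^2$ on the left of \eqref{eq2b: 01.06.26} unless $c_1$ itself is small. So the smallness must come from $c_1$, and that is possible only because $a-1\ge2$ when $k\ge3$. This is the one place where $k\ge3$ enters, and your sketch never uses it. Read literally, it would also give $R(2,\ell)\ge c(\ell/\log\ell)^{3/2}$, which is false since $R(2,\ell)=\ell$. (For $k=2$ one has $\varphi=b$ and $x\ge p$, so the right-hand side of \eqref{eq2b: 01.06.26} is at most $y(1-p)^b<(1-p)^b$.)

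You also have not checked that the two demands on $c_3$ are compatible. Write $\ell=\lambda n^{2/(k+1)}\log n$; your small $c$ corresponds to a large $\lambda\approx\frac{2}{k+1}c^{-2/(k+1)}$.
\begin{itemize}
\item The requirement $y\binom{n}{\ell}\to0$ is guaranteed by $c_3>\lambda$, via $\binom{n}{\ell}\le n^{\ell}$.
\item Condition \eqref{eq2b: 01.06.26}, once $2x\varphi\le\frac14 p\ell^2$, requires $c_3<\frac14 c_1\lambda^2$ up to $o(1)$, because $p\ell^2=c_1\lambda^2 n^{2/(k+1)}\log^2 n$.
\end{itemize}
The window between these is nonempty only because $p\ell^2$ is quadratic in $\lambda$ while $\ell\log n$ is linear. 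So the constants must be fixed in the order $c_1$, then $\lambda$, then $c_3$. For instance:
\begin{itemize}
\item choose $c_1$ with $2c_1^{a-1}/(k-2)!\le\frac14$, which gives $2x\varphi\le\frac14p\ell^2$;
\item set $\lambda=16/c_1$;
\item set $c_3=2\lambda$ and $x=2p^a$.
\end{itemize}
With these choices both conditions hold for all large $n$ with $\ell=\lceil\lambda n^{2/(k+1)}\log n\rceil$. Inverting this relation, using that $R(k,\cdot)$ is nondecreasing, yields the stated bound.

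Two minor slips. First, $(1-p)^b\le e^{-p\ell(\ell-1)/2}$, not $e^{-p\ell^2/2}$; this is harmless, since the loss $e^{p\ell/2}$ is only polynomial in $\ell$. Second, $\rho\le\binom{n}{\ell}$ is false for $k=3$, though $\rho\le\ell^2\binom{n}{\ell}$ suffices.
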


We close this subsection by noting that several recent works (see \cite{MaSX2026, HunterMS2025, LinN2026, Bradac2026})
derive improved lower bounds on off-diagonal Ramsey numbers using probabilistic techniques that do not rely on the LLL.

\subsection{Coloring hypergraphs}
\label{subsection: Coloring hypergraphs}

Using the LLL, the seminal work \cite{LLL75} established the existence of hypergraphs with
a prescribed chromatic number. Their approach highlights the power of probabilistic methods in demonstrating the
existence of sparse hypergraphs with nontrivial coloring properties.

This subsection concerns the coloring of hypergraphs and is based in part on \cite{LLL75} and \cite[Section~5.2]{AlonSpencer2016},
together with some reformulated proofs and strengthened results on hypergraph colorings (Theorems~\ref{theorem1: Erdos-Lovasz 75},
\ref{theorem2: Erdos-Lovasz 75}, \ref{theorem3: sharpened Erdos-Lovasz 75}, and Remark~\ref{remark: sharpened Erdos-Lovasz 75}).

\begin{definition}
{\em A \emph{hypergraph} $H = (V,E)$ consists of a set of vertices $V$ and a collection $E$ of subsets of $V$.
Each element $h \in E$ is called a \emph{hyperedge}. A hypergraph $H=(V,E)$ is \emph{$k$-colorable} if there is
a $k$-coloring of $V$ such that no hyperedge is monochromatic.}
\end{definition}

\begin{theorem} \label{theorem: k-colorable hypergraph}
{\em Let $H = (V,E)$ be a hypergraph in which every hyperedge has size at least $r$, and each hyperedge
intersects at most $d$ other hyperedges. If $ed \leq k^{r-1}$, with $k \geq 2$, then $H$ is $k$-colorable.}
\end{theorem}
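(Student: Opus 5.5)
Color each vertex of $V$ independently and uniformly at random with one of $k$ colors. For each hyperedge $h \in E$, let $A_h$ be the bad event that $h$ is monochromatic. We will show that with positive probability none of the events $A_h$ occurs, via the symmetric LLL. To use Corollary~\ref{corollary: Knuth} we need only the hypothesis $epd \le 1$; Theorem~\ref{theorem: LLL - symmetric} would require the slightly stronger $ep(d+1)\le 1$.

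\emph{Bounding the bad events.} For a hyperedge $h$ with $|h| = s \geq r$, the event $A_h$ is the union over the $k$ colors of the event that all $s$ vertices receive that color. Hence
\[
\mathbb{P}(A_h) = k \cdot k^{-s} = k^{1-s} \leq k^{1-r} =: p ,
\]
using $k \geq 2$ and $s \geq r$. If $H$ has no hyperedges, or some hyperedge is empty, the statement is degenerate and is handled separately. An empty hyperedge would make $H$ non-colorable, but it has size $0 < r$ whenever $r \ge 1$; we tacitly assume $r \ge 1$, with the case $r=1$ being trivial-inconsistent.

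\emph{Dependency structure.} The event $A_h$ is determined by the colors of the vertices in $h$. Since vertex colors are mutually independent, $A_h$ is independent of the $\sigma$-algebra generated by $\{A_{h'} : h' \cap h = \varnothing\}$, because these events are functions of disjoint sets of independent variables. By hypothesis, at most $d$ other hyperedges meet $h$. Therefore each $A_h$ is independent of the $\sigma$-algebra generated by all but at most $d$ of the remaining events.

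\emph{Applying the LLL.} The condition $ed \leq k^{r-1}$ is exactly $e p d \leq 1$ with $p = k^{1-r}$. If $d \geq 1$, Corollary~\ref{corollary: Knuth} gives $\mathbb{P}\bigl(\bigcap_h \overline{A_h}\bigr) > 0$, so some coloring leaves no hyperedge monochromatic, and $H$ is $k$-colorable. If $d = 0$, the hyperedges are pairwise disjoint and the events are mutually independent. Each has probability $k^{1-r} < 1$ (for $r \geq 2$), so the intersection of their complements has positive probability directly, as in the remark following Theorem~\ref{theorem: LLL - symmetric}. Alternatively, for $d=0$ one can color each hyperedge non-monochromatically by hand.

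\emph{Main obstacle.} The only delicate points are the boundary cases $d=0$ and small $r$, and the need to invoke Corollary~\ref{corollary: Knuth}, which rests on Shearer's theorem, rather than Theorem~\ref{theorem: LLL - symmetric}. The latter would only yield the weaker hypothesis $e(d+1) \le k^{r-1}$. To avoid relying on Shearer, the fallback is to prove the weaker version via Theorem~\ref{theorem: LLL - symmetric}, or to use Corollary~\ref{corollary: 1/2} under $e(d+\tfrac12)\le k^{r-1}$.
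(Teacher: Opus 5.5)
Your proof is correct and follows the paper's argument essentially verbatim: a uniform random $k$-coloring, bad events $A_h$ for monochromatic hyperedges with $\mathbb{P}(A_h)\le k^{1-r}$, independence of $A_h$ from the events of hyperedges disjoint from $h$, and an application of Corollary~\ref{corollary: Knuth} with $p=k^{1-r}$. Your separate treatment of $d=0$ is a worthwhile addition, since Corollary~\ref{corollary: Knuth} requires $d\ge 1$ and the paper passes over that case; note, however, that the restriction $r\ge 2$ is genuinely needed there, so $r=1$ is an actual exception to the statement rather than ``trivial-inconsistent'' (a hypergraph consisting of a single one-vertex hyperedge satisfies the hypotheses with $r=1$, $d=0$, yet is not $k$-colorable).
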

\begin{proof}
Assign to each vertex $v$ of $H$ a random color chosen independently and uniformly from $[k]$. For each hyperedge
$h \in E$, let $A_h$ be the bad event that $h$ is monochromatic. By assumption $|h| \geq r$, which implies that
\begin{align}
\label{eq1: 31.05.26}
\mathbb{P}(A_h) = k \left(\frac{1}{k}\right)^{|h|} \leq k^{1-r} \qquad \text{for all } h \in E.
\end{align}
Moreover, due to the independent coloring of the vertices,
$A_h$ is independent of the $\sigma$-algebra generated by all bad events $A_{h'}$ such that $h \cap h' = \varnothing$.
Since $h$ intersects at most $d$ other hyperedges, it follows that $A_h$ is independent of the $\sigma$-algebra generated by all
other bad events except for at most $d$ of them.
Hence, there exists a dependency graph for the family of events $\{A_h\}_{h\in E}$ whose maximum degree is at most $d$.
Applying Corollary~\ref{corollary: Knuth} with $p = k^{1-r}$, it follows that if $e d k^{1-r} \leq 1$, then
\begin{align}
\label{eq2: 31.05.26}
\mathbb{P}\left( \bigcap_{h \in E} \overline{A_h} \right) > 0.
\end{align}
This shows that if $ed \leq k^{r-1}$, then $H$ admits a $k$-coloring in which no hyperedge is monochromatic. By definition, $H$ is $k$-colorable.
\end{proof}

\begin{definition}
{\em A hypergraph $H$ is \emph{$r$-uniform} if each hyperedge has size $r$, and it is
\emph{$b$-regular} if every vertex in $V$ is contained in exactly $b$ hyperedges of $H$.}
\end{definition}

\begin{corollary} \label{cor1: 2-colorable}
{\em Let $H$ be an $r$-uniform and $b$-regular hypergraph with $b,r \geq 2$, and let $k \geq 2$.
If $er(b-1) \leq k^{r-1}$, then $H$ is $k$-colorable.}
\end{corollary}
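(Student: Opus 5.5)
The corollary follows from Theorem~\ref{theorem: k-colorable hypergraph} once we bound how many other hyperedges a given hyperedge can meet. In an $r$-uniform, $b$-regular hypergraph every hyperedge has size exactly $r$, so the size hypothesis of that theorem holds with the same $r$. It remains to take $d := r(b-1)$ and check that each hyperedge intersects at most $d$ other hyperedges. The hypothesis $er(b-1)\leq k^{r-1}$ is then precisely $ed \leq k^{r-1}$, and Theorem~\ref{theorem: k-colorable hypergraph} gives $k$-colorability.

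For the counting step, fix a hyperedge $h$. Any other hyperedge $h'$ with $h \cap h' \neq \varnothing$ contains at least one vertex $v \in h$. By $b$-regularity, each $v\in h$ lies in exactly $b$ hyperedges, one of which is $h$ itself. So for each of the $r$ vertices of $h$, at most $b-1$ other hyperedges pass through it. A union bound over the $r$ vertices shows that the set of hyperedges meeting $h$, excluding $h$, has size at most $r(b-1)$. Hyperedges meeting $h$ in several vertices are overcounted, but this only helps the upper bound.

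One subtlety is whether $E$ may contain repeated hyperedges. If $E$ is a multiset, the same count still works, since each copy through $v$ is counted among the $b$ hyperedges at $v$. The events $A_h$ for distinct copies are then treated as distinct events, exactly as in the proof of Theorem~\ref{theorem: k-colorable hypergraph}.

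No step is a real obstacle: the counting argument is the only substantive part, and it is a one-line union bound. I would also note that the lower-order requirement $b \geq 2$ keeps $d \geq 1$, as Corollary~\ref{corollary: Knuth} requires; Corollary~\ref{corollary: Knuth} is the result invoked inside Theorem~\ref{theorem: k-colorable hypergraph}. If $d$ happened to be $0$, the events would be mutually independent and the conclusion would hold trivially anyway.
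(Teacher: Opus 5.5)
Your proposal is correct and matches the paper's proof. You bound the number of other hyperedges meeting a fixed hyperedge by $r(b-1)$, using $b$-regularity and a union bound over its $r$ vertices, and then invoke Theorem~\ref{theorem: k-colorable hypergraph} with $d=r(b-1)$; your remarks on repeated hyperedges and on $d\geq 1$ are harmless additions that the paper leaves implicit.
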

\begin{proof}
Each hyperedge $h \in E$ contains exactly $r$ vertices, and each of these vertices belongs to exactly $b$ hyperedges.
Therefore, for every $h \in E$, the number of hyperedges intersecting $h$ is at most $r(b-1)$.
Hence, Theorem~\ref{theorem: k-colorable hypergraph} applies with $d=r(b-1)$, yielding the desired result.
\end{proof}

\begin{corollary} \label{cor2: 2-colorable}
{\em Let $H$ be an $r$-uniform and $r$-regular hypergraph. Then $H$ is 2-colorable if $r \geq 9$, 3-colorable if $r \geq 5$,
4-colorable if $r \geq 4$, 5-colorable if $r \geq 3$, and 6-colorable if $r \geq 2$.}
\end{corollary}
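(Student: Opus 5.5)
The plan is to specialize Corollary~\ref{cor1: 2-colorable} to $b=r$. An $r$-uniform, $r$-regular hypergraph with $r \geq 2$ meets the hypotheses of that corollary with $b=r$. Hence $H$ is $k$-colorable as soon as
\begin{align*}
e\,r(r-1) \leq k^{r-1}.
\end{align*}
So it suffices to show that this inequality holds for every pair $(k,r)$ covered by the statement, namely $(2,r)$ with $r\ge 9$, $(3,r)$ with $r\ge 5$, $(4,r)$ with $r\ge 4$, $(5,r)$ with $r\ge 3$, and $(6,r)$ with $r\ge 2$.

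\textbf{Step 1: the base cases.} Using $e < 2.72$, I will check the threshold value of $r$ for each $k$ directly:
\begin{align*}
(k,r)=(2,9)&: \quad 72e \approx 195.7 \leq 2^8 = 256,\\
(k,r)=(3,5)&: \quad 20e \approx 54.4 \leq 3^4 = 81,\\
(k,r)=(4,4)&: \quad 12e \approx 32.6 \leq 4^3 = 64,\\
(k,r)=(5,3)&: \quad 6e \approx 16.3 \leq 5^2 = 25,\\
(k,r)=(6,2)&: \quad 2e \approx 5.44 \leq 6.
\end{align*}
The last case is the tightest, so I will record that $2e < 5.44 < 6$ explicitly.

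\textbf{Step 2: induction on $r$ for fixed $k$.} Suppose $e\,r(r-1) \leq k^{r-1}$ for some $r$. Passing from $r$ to $r+1$ multiplies the left-hand side by $\frac{r+1}{r-1}$ and the right-hand side by $k$. It therefore suffices that $\frac{r+1}{r-1} \leq k$. For $r \geq 3$ we have $\frac{r+1}{r-1} \leq 2 \leq k$, so this holds for all $k\ge 2$. The only case in which the induction starts at $r=2$ is $k=6$. There the required bound is $\frac{3}{1}=3 \leq 6$, which also holds. Thus the inequality propagates from each base case to all larger $r$, and Corollary~\ref{cor1: 2-colorable} yields each claimed colorability.

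\textbf{Main obstacle.} There is no conceptual difficulty. The only care needed is to confirm the tight numerical base cases, especially $2e \le 6$ for $(6,2)$ and $72e \le 256$ for $(2,9)$, with a safe rational upper bound on $e$. I will also remark that the thresholds are exactly where this criterion starts to apply: for example $56e > 128$ at $(2,8)$ and $6e > 16$ at $(4,3)$. This explains the particular values in the statement, although it does not show that colorability fails below them.
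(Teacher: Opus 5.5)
Your proposal is correct and follows the paper's own route: specialize Corollary~\ref{cor1: 2-colorable} to $b=r$ and check that $e\,r(r-1)\le k^{r-1}$ holds from the stated thresholds onward. The paper only asserts the threshold ranges; your base-case checks, together with the inductive step using $\frac{r+1}{r-1}\le k$, supply the verification it leaves implicit.
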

\begin{proof}
By Corollary~\ref{cor1: 2-colorable}, applied with $r=b$, it follows that if $er(r-1) \leq k^{r-1}$, then
$H$ is $k$-colorable. This inequality holds if and only if $r \geq 9, 5, 4, 3$, and $2$ for $k = 2, 3, 4, 5$,
and $6$, respectively.
\end{proof}

\smallskip
The next two results restate \cite[Theorem~2]{LLL75} and \cite[Theorem~3]{LLL75}, respectively, and present reformulated
proofs that yield slightly stronger conclusions, based on Corollary~\ref{corollary: Knuth} and on replacing the constant~$4$
in \cite{LLL75} by~$e$.
Moreover, these proofs are somewhat simpler, as they do not rely on the line graph of $H$, but instead work directly with $H$.
\begin{theorem}
\label{theorem1: Erdos-Lovasz 75}
{\em Let $k, r \geq 2$, and let $H$ be an $r$-uniform hypergraph.
If every hyperedge of $H$ intersects at most $\frac{1}{e} \, k^{r-1}$ other hyperedges, then $H$ is $k$-colorable.
In particular, if every vertex of $H$ has degree at most $\frac{k^{r-1}}{er}$, then $H$ is $k$-colorable.}
\end{theorem}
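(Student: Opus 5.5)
The plan is to work directly with $H$, without passing to its line graph, and to apply Corollary~\ref{corollary: Knuth}.

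\textbf{Random coloring and bad events.} Color each vertex of $H$ independently and uniformly at random with a color from $[k]$. For every hyperedge $h \in E$, let $A_h$ be the event that $h$ is monochromatic. Since $H$ is $r$-uniform, exactly as in \eqref{eq1: 31.05.26} we get
\begin{align*}
\mathbb{P}(A_h) = k^{1-r} =: p .
\end{align*}

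\textbf{Dependency count.} The vertex colors are mutually independent. Hence $A_h$ is independent of the $\sigma$-algebra generated by the events $\{A_{h'} : h' \cap h = \varnothing\}$, because these are functions of disjoint sets of colors. So $A_h$ is independent of the $\sigma$-algebra generated by all remaining bad events except those $A_{h'}$ with $h' \neq h$ and $h' \cap h \neq \varnothing$. By hypothesis there are at most $d := \lfloor k^{r-1}/e \rfloor$ such hyperedges.

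\textbf{Degenerate case and main case.} If $d = 0$, every hyperedge meets no other hyperedge. Then the events are mutually independent, each with probability $p < 1$ since $k,r \geq 2$, so $\mathbb{P}\bigl(\bigcap_h \overline{A_h}\bigr) > 0$ directly. If $d \geq 1$, then
\begin{align*}
e\,p\,d \leq e \, k^{1-r} \cdot \frac{k^{r-1}}{e} = 1,
\end{align*}
so Corollary~\ref{corollary: Knuth} gives $\mathbb{P}\bigl(\bigcap_{h \in E}\overline{A_h}\bigr) > 0$. Therefore some coloring leaves no hyperedge monochromatic, and $H$ is $k$-colorable.

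\textbf{The vertex-degree statement.} Suppose every vertex lies in at most $\Delta \le k^{r-1}/(er)$ hyperedges. Each of the $r$ vertices of a hyperedge $h$ lies in at most $\Delta - 1$ other hyperedges, so $h$ meets at most $r(\Delta-1) < r\Delta \le k^{r-1}/e$ other hyperedges. The first part then applies.

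\textbf{Main obstacle.} The only delicate points are bookkeeping. First, Corollary~\ref{corollary: Knuth} requires $d \ge 1$, which is why the case $d=0$ is handled separately. Second, one must check that the dependency notion (independence from the $\sigma$-algebra of the disjoint hyperedges' events) matches Definition~\ref{definition: Dependency digraph}; this follows from the independence of the vertex colors.
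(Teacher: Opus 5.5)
Your proposal is correct and follows essentially the same route as the paper, which reduces the first assertion to Theorem~\ref{theorem: k-colorable hypergraph} (uniform random coloring, monochromatic events of probability $k^{1-r}$, dependencies only through intersecting hyperedges, then Corollary~\ref{corollary: Knuth}) and obtains the vertex-degree clause by the same subadditivity count. Your separate treatment of $d=0$ is a worthwhile addition: this case actually occurs when $k=r=2$, where $k^{r-1}/e=2/e<1$, and Corollary~\ref{corollary: Knuth} (which needs $d\geq 1$) cannot handle it, yet the paper's proof invokes the corollary without addressing it.
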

\begin{proof}
The first assertion is a direct restatement of Theorem~\ref{theorem: k-colorable hypergraph}. Since every hyperedge of $H$
intersects at most $\frac{1}{e}\,k^{r-1}$ other hyperedges, it follows that $H$ is $k$-colorable.

The second assertion follows immediately from the first.
Indeed, every hyperedge contains $r$ vertices, each of degree at most
$\frac{k^{r-1}}{er}$.
Therefore, the number of hyperedges intersecting a given hyperedge is at most
$r\cdot \frac{k^{r-1}}{er} = \frac{1}{e}\,k^{r-1}$,
where we rely on the subadditivity of cardinality.
Applying the first assertion completes the proof.
\end{proof}

\begin{theorem}
\label{theorem2: Erdos-Lovasz 75}
{\em Let $r \geq k \geq 2$, and let $H$ be an $r$-uniform hypergraph. If every hyperedge of $H$ intersects at most
$\left\lfloor \frac{k^{r-1}}{e (k-1)^r} \right\rfloor$ other hyperedges, then $H$ admits a $k$-coloring in which
each hyperedge contains all colors.}
\end{theorem}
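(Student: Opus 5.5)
We color each vertex of $H$ independently and uniformly at random from $[k]$. For each hyperedge $h \in E$, let $A_h$ be the bad event that $h$ misses at least one color. By a union bound over the $k$ colors that could be missing,
\begin{align*}
\mathbb{P}(A_h) \leq k\left(1-\frac1k\right)^{r} = \frac{(k-1)^r}{k^{r-1}} =: p.
\end{align*}
As in the proof of Theorem~\ref{theorem: k-colorable hypergraph}, $A_h$ is a function of the colors of the vertices in $h$ only. Hence $A_h$ is independent of the $\sigma$-algebra generated by $\{A_{h'} : h' \cap h = \varnothing\}$. By hypothesis, this excludes at most $d := \bigl\lfloor \frac{k^{r-1}}{e(k-1)^r} \bigr\rfloor$ other events.

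If $d \geq 1$, we invoke Corollary~\ref{corollary: Knuth}. The condition $epd \leq 1$ reads $e \,\frac{(k-1)^r}{k^{r-1}}\, d \leq 1$, which holds because $d \leq \frac{k^{r-1}}{e(k-1)^r}$. The corollary then gives $\mathbb{P}\bigl(\bigcap_h \overline{A_h}\bigr) > 0$. So some coloring has every hyperedge containing all $k$ colors.

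The only delicate point is the degenerate case $d = 0$, where Corollary~\ref{corollary: Knuth} (stated for $d \geq 1$) does not apply. In that case the hyperedges are pairwise disjoint, so the events $A_h$ are mutually independent. It then suffices that $p < 1$, i.e.\ $k(1-1/k)^r < 1$. This is where $r \geq k$ enters: $(1-1/k)^r \leq (1-1/k)^k < e^{-1}$, so $p < k/e$. That bound is not below $1$ once $k \geq 3$, so it is not enough by itself. Even the $d=0$ case can be settled directly, though: since $r \geq k$, we can color each (disjoint) hyperedge deterministically with all $k$ colors.

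Finally, we check that the quantity $d$ in the statement is meaningful for $r \geq k$, noting that $d$ can be $0$, in which case the disjoint-hyperedge argument covers it. No further obstacle is expected. The main care lies in keeping the floor and the $d=0$ versus $d\geq1$ split consistent with the hypotheses of Corollary~\ref{corollary: Knuth}.
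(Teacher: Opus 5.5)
Your proof is correct and follows the paper's argument. It uses the same uniform random coloring, the same union bound $\mathbb{P}(A_h)\le k(1-\frac1k)^r$, the same dependency count $d=\bigl\lfloor \frac{k^{r-1}}{e(k-1)^r}\bigr\rfloor$, and the same appeal to Corollary~\ref{corollary: Knuth} via $epd\le 1$. Your separate treatment of $d=0$ is a worthwhile addition: there the hyperedges are pairwise disjoint and, since $r\ge k$, each can be colored deterministically with all $k$ colors. The paper applies Corollary~\ref{corollary: Knuth}, which is stated only for $d\ge 1$, without addressing this case, yet it does occur (for example, $r=k=2$ gives $\lfloor 2/e\rfloor=0$).
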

\begin{proof}
As in the proof of Theorem~\ref{theorem: k-colorable hypergraph}, assign to each vertex
of $H$ a color chosen independently and uniformly at random from $[k]$.
For each hyperedge $h \in E$, let $A_h$ denote the bad event that $h$ does not contain all $k$ colors.
Since the event $A_h$ is the union of the $k$ events that a particular color is missing from $h$, it
follows by the union bound that
\begin{align}
\label{eq1: 23.04.26}
\mathbb{P}(A_h) \leq k \left(1-\frac1k\right)^r := p \qquad \text{for all } h \in E.
\end{align}
Since $A_h$ depends only on the colors assigned to the vertices of $h$, and the vertices of $H$ are colored independently,
$A_h$ is independent of the $\sigma$-algebra generated by all bad events $A_{h'}$ such that $h \cap h' = \varnothing$.
Let
\begin{align}
\label{eq3: 31.05.26}
d:= \left\lfloor \frac{k^{r-1}}{e (k-1)^r} \right\rfloor.
\end{align}
Since $h$ intersects at most $d$ other hyperedges, it follows that
$A_h$ is independent of the $\sigma$-algebra generated by all other bad events except for at most $d$ of them.
Hence, there exists a dependency graph for the family of events $\{A_h\}_{h\in E}$ with maximum degree at most $d$.
This implies that
\begin{align}
\label{eq4: 31.05.26}
epd \leq e k \left(1-\frac1k\right)^r \cdot \frac{k^{r-1}}{e (k-1)^r} = 1.
\end{align}
By Corollary~\ref{corollary: Knuth}, with positive probability, no
bad event $A_h$ ($h \in E$) occurs. Therefore, there exists a $k$-coloring of the vertices of $H$ in which every
hyperedge contains all $k$ colors.
\end{proof}

\noindent
In the following, we propose a stronger version of Theorem~\ref{theorem2: Erdos-Lovasz 75}.
To derive an exact closed-form expression for the probability $\mathbb{P}(A_h)$ in place
of the upper bound on the right-hand side of \eqref{eq1: 23.04.26}, we recall the definition
of the Stirling numbers of the second kind.

\begin{definition}
\label{def: Stirling number of 2nd kind}
{\em Let $n, k \in \mathbb{N}$. The {\em Stirling number of the second kind}, denoted by $S(n,k)$, is
the number of ways to partition the set $[n]$ into $k$ nonempty and pairwise disjoint subsets. If
$k > n$, then $S(n,k)=0$.}
\end{definition}

In the proof of Theorem~\ref{theorem2: Erdos-Lovasz 75}, the $r$ vertices of a hyperedge $h \in E$
are independently assigned colors chosen uniformly at random from $[k]$. Hence, all $k^r$ colorings
of $h$ are equally likely. By Definition~\ref{def: Stirling number of 2nd kind}, the number of colorings
of $h$ that use all $k$ colors is $k!\,S(r,k)$. Therefore, the probability of the bad event $A_h$,
namely, that $h$ does not contain all $k$ colors, is
\begin{align}
\label{eq2: 23.04.26}
\mathbb{P}(A_h) = 1 - \frac{k! \, S(r,k)}{k^r}.
\end{align}
By \cite[Eq.~(6.19)]{GrahamKP89}, which gives a closed-form expression for $S(r,k)$, it follows that if $r \geq k \geq 2$, then
\begin{align}
\label{eq3: 23.04.26}
\mathbb{P}(A_h) = 1 - \frac1{k^r} \sum_{j=0}^k \Biggl\{ (-1)^{k-j} \, \binom{k}{j} \, j^{\, r} \Biggr\}, \qquad h \in E.
\end{align}
This allows one to sharpen the result of Theorem~\ref{theorem2: Erdos-Lovasz 75} through an application of Corollary~\ref{corollary: Knuth},
replacing the upper bound $p$ on $\mathbb{P}(A_h)$ in \eqref{eq1: 23.04.26} with the exact value given by \eqref{eq3: 23.04.26}. Consequently,
the following result holds:
\begin{theorem}
\label{theorem3: sharpened Erdos-Lovasz 75}
{\em Let $r \geq k \geq 2$, and let $H$ be an $r$-uniform hypergraph. If every hyperedge of $H$ intersects at most
\[
\frac1{e \, \Biggl( 1 - \frac1{k^r} \overset{k}{\underset{j=0}{\sum}} \Bigl\{ (-1)^{k-j} \, \binom{k}{j} \, j^{\, r} \Bigr\} \Biggr)}
\]
other hyperedges, then $H$ admits a $k$-coloring in which each hyperedge contains all colors.}
\end{theorem}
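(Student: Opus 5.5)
The plan is to repeat the proof of Theorem~\ref{theorem2: Erdos-Lovasz 75} almost verbatim. The only change is that the union-bound estimate \eqref{eq1: 23.04.26} is replaced by the exact value of $\mathbb{P}(A_h)$ in \eqref{eq3: 23.04.26}, and Corollary~\ref{corollary: Knuth} is then invoked with this exact value as $p$.

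\textbf{Setup.} Color each vertex of $H$ independently and uniformly at random from $[k]$. For $h \in E$, let $A_h$ be the event that $h$ misses at least one color. Since $|h| = r$, all $k^r$ colorings of $h$ are equally likely. Exactly $k!\,S(r,k)$ of them are surjective onto $[k]$, so \eqref{eq2: 23.04.26} and \eqref{eq3: 23.04.26} give the same value
\[
p := 1 - \frac{1}{k^r} \sum_{j=0}^k (-1)^{k-j} \binom{k}{j} j^{\,r}
\]
for every $h$. The closed form is inclusion--exclusion over the set of missing colors (equivalently, \cite[Eq.~(6.19)]{GrahamKP89}).

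\textbf{Non-degeneracy and dependency.} Because $r \ge k$, we have $S(r,k) \ge 1$, so $p < 1$. We may also assume $p > 0$; otherwise no bad event occurs and the conclusion is trivial. This makes the bound in the statement, $d := 1/(ep)$, a finite positive number. As before, $A_h$ is determined by the colors on $h$. Hence it is independent of the $\sigma$-algebra generated by $\{A_{h'} : h' \cap h = \varnothing\}$. By hypothesis at most $d$ other hyperedges meet $h$, so there is a dependency graph of maximum degree at most $d$.

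\textbf{Applying the LLL.} Corollary~\ref{corollary: Knuth} is phrased with an integer $d \ge 1$. I would therefore apply it with $d' := \lfloor d \rfloor$, since the degree is an integer and so is at most $d'$. This gives $e p d' \le e p d = 1$, and the corollary yields $\mathbb{P}\bigl(\bigcap_h \overline{A_h}\bigr) > 0$. The edge case $d' = 0$ means the events are mutually independent; then the probability is at least $(1-p)^{|E|} > 0$ because $p < 1$. In either case there is a $k$-coloring in which every hyperedge contains all colors.

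The main point to get right is not deep: it is this bookkeeping around Corollary~\ref{corollary: Knuth} (integrality of $d$, the $d = 0$ case, $p \in (0,1)$), together with correctly justifying the exact formula for $\mathbb{P}(A_h)$. As a remark, I would observe that the exact $p$ never exceeds the union bound $k(1-1/k)^r$. The new threshold is therefore at least that of Theorem~\ref{theorem2: Erdos-Lovasz 75}, which confirms the claimed sharpening.
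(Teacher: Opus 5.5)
Your proposal is correct and follows the paper's route exactly: rerun the proof of Theorem~\ref{theorem2: Erdos-Lovasz 75} with the exact value \eqref{eq3: 23.04.26} of $\mathbb{P}(A_h)$ in place of the union bound, then invoke Corollary~\ref{corollary: Knuth}. Your extra bookkeeping is sound and fills in details the paper leaves implicit, namely passing to $\lfloor 1/(ep) \rfloor$ and handling the case where this floor is $0$ by mutual independence; also, $p>0$ holds automatically when $k \geq 2$ (a monochromatic coloring of $h$ misses a color), so you do not need that case distinction.
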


The following remark shows that the improvement afforded by the sharpened result in Theorem~\ref{theorem3: sharpened Erdos-Lovasz 75}
over Theorem~\ref{theorem2: Erdos-Lovasz 75} allows one to increase the number of hyperedges intersecting any given hyperedge by at most~1.
\begin{remark}
\label{remark: sharpened Erdos-Lovasz 75}
{\em Let $r \geq k \geq 2$, and define
\begin{align}
D_{r,k} := k\left(1 - \frac{1}{k}\right)^r,
\qquad
N_{r,k} := 1 - \frac{k! \, S(r,k)}{k^r}.
\end{align}
Furthermore, set
\begin{align}
\label{eq5: 31.05.26}
A_{r,k} := \left\lfloor \frac{1}{e D_{r,k}} \right\rfloor,
\qquad
B_{r,k} := \left\lfloor \frac{1}{e N_{r,k}} \right\rfloor.
\end{align}
Then $A_{r,k}$ and $B_{r,k}$ represent, respectively, the largest number of hyperedges that may intersect a given hyperedge
in an $r$-uniform hypergraph such that Theorem~\ref{theorem2: Erdos-Lovasz 75} or Theorem~\ref{theorem3: sharpened Erdos-Lovasz 75}
guarantees the existence of a $k$-coloring in which every hyperedge contains all $k$ colors.

\noindent
The quantity $D_{r,k}$ is the expected number of empty boxes when $r$ balls are thrown independently and uniformly
into $k$ boxes, while $N_{r,k}$ is the probability that at least one box is empty. Indeed, $D_{r,k}$ is the sum of the
probabilities that the individual boxes are empty, and therefore, by the union bound, $N_{r,k} \leq D_{r,k}$. Consequently,
$B_{r,k} \geq A_{r,k}$, and we next show that their difference is at most one.
Let $X$ denote the number of empty boxes, i.e.,
\[
X = \sum_{i=1}^k I_i \, ,   \qquad
I_i := \mathbf{1}_{\{ \text{box } i \text{ is empty} \}} \quad \text{for all } i \in [k],
\]
where $\mathbf{1}_{E}$ denotes the indicator function of an event $E$. Then,
\begin{align*}
& \mathbb{E}[I_i] = \left(1-\frac{1}{k}\right)^r, \qquad \mathbb{E}[X] = D_{r,k},
\qquad \mathbb{P}(X \geq 1) = N_{r,k},  \\
& \mathbb{E}[I_i I_j] = \left(1-\frac{2}{k}\right)^r, \quad \text{for all } i \neq j,
\end{align*}
so
\begin{align*}
\mathrm{Cov}(I_i,I_j) &= \left(1-\frac{2}{k}\right)^r - \left(1-\frac{1}{k}\right)^{2r} < 0,
\end{align*}
thus the indicators $\{I_i\}$ are negatively correlated. Therefore,
\[
\mathrm{Var}(X) = \sum_{i=1}^k \mathrm{Var}(I_i) + \sum_{i \ne j} \mathrm{Cov}(I_i,I_j)
\leq \sum_{i=1}^k \mathrm{Var}(I_i).
\]
Since each $I_i$ is a Bernoulli random variable with mean $p=\left(1-\frac{1}{k}\right)^r$, it follows that
\begin{align*}
& \mathrm{Var}(I_i)=p(1-p)\leq p, \\[0.1cm] 
& \mathrm{Var}(X) \leq k p = D_{r,k}.
\end{align*}
By the Cauchy--Schwarz inequality, and since $X$ is a nonnegative integer-valued random variable,
\begin{align}
(\mathbb{E}[X])^2 &= (\mathbb{E}[X \mathbf{1}_{\{X \geq 1\}}])^2 \nonumber \\
&\leq \mathbb{E}[X^2] \; \mathbb{E}[\mathbf{1}_{\{X \geq 1\}}] \nonumber \\
\label{eq1: 17.06.2026}
&= \mathbb{E}[X^2] \; \mathbb{P}(X \geq 1), 
\end{align}
and therefore 
\begin{align}
\label{eq6: 31.05.26}
N_{r,k} = \mathbb{P}(X \geq 1) \geq \frac{(\mathbb{E}[X])^2}{\mathbb{E}[X^2]}.
\end{align}
Since
\[
\mathbb{E}[X^2] = \mathrm{Var}(X)+(\mathbb{E}[X])^2 \leq D_{r,k}+D_{r,k}^2,
\]
we get
\[
N_{r,k} \geq \frac{D_{r,k}^2}{D_{r,k}+D_{r,k}^2} = \frac{D_{r,k}}{1+D_{r,k}}.
\]
Consequently,
\[
0 \leq \frac{1}{e N_{r,k}} - \frac{1}{e D_{r,k}} \leq \frac{1}{e} < 1,
\]
and therefore, by \eqref{eq5: 31.05.26},
\begin{align}
B_{r,k} - A_{r,k} \in \{0,1\}.
\end{align}
The equality $B_{r,k} = A_{r,k} + 1$ can hold for some pairs $(r,k)$ with $r \geq k \geq 2$, and
Table~\ref{table2} provides several such numerical examples.
\begin{table}[hbt]
\centering
\begin{tabular}{|c||c|c|}
\hline
$(r,k)$ & $A_{r,k}$ & $B_{r,k}$ \\
\hline
(21, 5) & 7 & 8 \\
(22, 5) & 9 & 10 \\
(19, 6) & 1 & 2 \\
(28, 7) & 3 & 4 \\
(35, 8) & 4 & 5 \\
(41, 8) & 10 & 11 \\
(48, 8) & 27 & 28 \\
\hline
\end{tabular}
\vspace*{0.2cm}
\caption{\centering{Examples of pairs $(r,k)$ with $r \geq k \geq 2$ for which the equality $B_{r,k} = A_{r,k} + 1$ holds.}}
\label{table2}
\end{table}}
\end{remark}

\subsection{Length of cycles in directed graphs}
\label{subsection: Length of cycles in directed graphs}

The following result, due to Alon and Linial \cite{AlonL89}, provides an insightful
application of the symmetric version of the LLL (Theorem~\ref{theorem: LLL - symmetric}).
Unlike the three preceding applications in Sections~\ref{subsection: diagonal Ramsey numbers},
\ref{subsection: off-diagonal Ramsey numbers}, and~\ref{subsection: Coloring hypergraphs},
the next theorem is not directly concerned with graph coloring, although its proof relies
on coloring arguments.

\begin{theorem}[\hspace*{-0.1cm} \cite{AlonL89}]
\label{theorem: Alon and Linial, 1989}
{\em Let $D = (V,E)$ be a finite, simple, and directed graph with minimum out-degree $\delta \geq 1$
and maximum in-degree at most $\Delta \geq 1$. If, for some $k \in \mathbb{N}$,
\begin{align}
\label{eq: Alon and Linial, 1989}
e \bigl( \delta \Delta + 1 \bigr) \, \biggl(1 - \frac1k \biggr)^{\delta} \leq 1,
\end{align}
then $D$ contains a directed cycle whose length is divisible by $k$.}
\end{theorem}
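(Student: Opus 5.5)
First reduce to the case where every vertex has out-degree exactly $\delta$: for each vertex $v$, keep exactly $\delta$ of its outgoing arcs and delete the rest. The resulting subdigraph $D'$ still has maximum in-degree at most $\Delta$. Any directed cycle of $D'$ is also a directed cycle of $D$, so it suffices to treat $D'$. For $k=1$ there is nothing to prove beyond existence of a cycle, and this holds because the minimum out-degree is at least $1$ and $V$ is finite; the coloring argument below also covers this case. Assume $k \geq 2$ from now on.

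Choose a random coloring $f: V \to \mathbb{Z}_k$, with the colors $f(v)$ independent and uniform. For each vertex $v$, let $A_v$ be the bad event that no out-neighbor $u$ of $v$ in $D'$ satisfies $f(u) \equiv f(v)+1 \pmod k$. Conditioning on $f(v)$, the $\delta$ out-neighbors receive independent colors, each avoiding the single value $f(v)+1$ with probability $1-\tfrac1k$. Since $D'$ is simple, these out-neighbors are distinct and different from $v$, so
\[
\mathbb{P}(A_v)=\Bigl(1-\tfrac1k\Bigr)^{\delta}=:p.
\]

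Next, bound the dependencies. Let $N^+(v)$ denote the out-neighborhood of $v$ in $D'$. The event $A_v$ is determined by the colors on $\{v\}\cup N^+(v)$. Since the colors are independent, $A_v$ is mutually independent of the $\sigma$-algebra generated by all events $A_u$ with $(\{u\}\cup N^+(u))\cap(\{v\}\cup N^+(v))=\varnothing$. Such an intersection can be nonempty only in three ways:
\begin{itemize}
\item $u \in N^+(v)$, giving $\delta$ choices;
\item $v \in N^+(u)$, i.e., $u$ is an in-neighbor of $v$, giving at most $\Delta$ choices;
\item $u$ shares an out-neighbor with $v$, or $u$ is an in-neighbor of some $w\in N^+(v)$, giving at most $\delta(\Delta-1)$ further vertices other than $v$.
\end{itemize}
The careful count is the main obstacle: the naive total is $\delta+\Delta+\delta\Delta$, which is larger than the desired $\delta\Delta$. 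The bound $d=\delta\Delta$ is obtained by counting more carefully. For each $w\in\{v\}\cup N^+(v)$, the vertices $u\neq v$ whose set $\{u\}\cup N^+(u)$ contains $w$ are either $u=w$ itself or in-neighbors of $w$. The in-neighbors of $w\in N^+(v)$ number at most $\Delta$, one of which is $v$; together with $w$ itself this gives at most $\Delta$ vertices per $w$, hence at most $\delta\Delta$ over all $w \in N^+(v)$. The remaining case $w=v$ contributes the in-neighbors of $v$, at most $\Delta$ of them, and these must be absorbed or bounded separately. If the exact count does not close at $\delta\Delta$, I would instead use the standard Alon--Linial count, $d\le\Delta(1+\delta)$ or similar, and verify which form matches \eqref{eq: Alon and Linial, 1989}. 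The target is $d+1\le \delta\Delta+1$, so that the symmetric LLL (Theorem~\ref{theorem: LLL - symmetric}) applies with $ep(d+1)\le e(\delta\Delta+1)(1-\tfrac1k)^\delta\le1$.

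By the symmetric LLL, with positive probability no $A_v$ occurs. Fix such a coloring $f$. Then every vertex $v$ has an out-neighbor whose color is $f(v)+1$. Start at any vertex and repeatedly follow such an arc; by finiteness the walk eventually revisits a vertex, which closes a directed cycle $v_0\to v_1\to\dots\to v_L=v_0$. Along this cycle $f(v_{i+1})\equiv f(v_i)+1 \pmod k$, so summing around it gives $L\equiv 0 \pmod k$. The cycle lies in $D'\subseteq D$, and this completes the proof.
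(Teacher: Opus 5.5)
Your reduction to out-degree exactly $\delta$, the computation $\mathbb{P}(A_v)=(1-\frac1k)^{\delta}$, and the closing walk argument are fine. The dependency count, however, has a genuine gap, which you flag yourself and leave open. Under your criterion, $A_v$ is declared independent only of those $A_u$ with $(\{u\}\cup N^+(u))\cap(\{v\}\cup N^+(v))=\varnothing$. With that criterion the in-neighbors of $v$, whose events involve $f(v)$, must be counted. They cannot be absorbed into the other counts, because an in-neighbor of $v$ need neither lie in $N^+(v)$ nor share an out-neighbor with $v$. So you only get $d\le \delta\Delta+\Delta$, and the symmetric LLL then yields the weaker condition $e(\delta\Delta+\Delta+1)(1-\frac1k)^{\delta}\le 1$, not $e(\delta\Delta+1)(1-\frac1k)^{\delta}\le 1$. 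Falling back on ``$d\le\Delta(1+\delta)$ or similar'' therefore does not prove the statement.

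The missing idea is that the color $f(v)$ creates no dependence at all. Your conditioning argument shows more than the value of $\mathbb{P}(A_v)$. Condition on $f(v)$ \emph{and} on the colors of all vertices outside $N^+(v)$. The colors on $N^+(v)$ are still independent and uniform, so the conditional probability of $A_v$ is the constant $(1-\frac1k)^{\delta}$. Hence $A_v$ is independent of the $\sigma$-algebra generated by $\{f(w): w\notin N^+(v)\}$; note that $v\notin N^+(v)$ since there are no loops. It is therefore mutually independent of all $A_u$ with $N^+(v)\cap(\{u\}\cup N^+(u))=\varnothing$, where the intersection is with $N^+(v)$ rather than with $\{v\}\cup N^+(v)$. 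The vertices $u\neq v$ violating this are exactly those in your per-$w$ count: for each $w\in N^+(v)$, either $u=w$ or $u$ is one of the at most $\Delta-1$ in-neighbors of $w$ other than $v$. This gives at most $\delta\Delta$ in total, and the in-neighbors of $v$ never enter. The resulting dependency structure need not be symmetric. That is harmless, since Theorem~\ref{theorem: LLL - symmetric} only requires each event to be independent of all but at most $d$ of the others. With $d=\delta\Delta$ you get $ep(d+1)=e(\delta\Delta+1)(1-\frac1k)^{\delta}\le 1$, which is exactly the paper's argument.
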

\begin{remark}
\label{remark: Alon and Linial, 1989 - relaxed}
{\em By Corollary~\ref{corollary: Knuth} and the proof of Theorem~\ref{theorem: Alon and Linial, 1989},
the condition in \eqref{eq: Alon and Linial, 1989} can be relaxed to
\begin{align}
\label{eq: Alon and Linial, 1989 - relaxed}
e \delta \Delta \, \biggl(1 - \frac1k \biggr)^{\delta} \leq 1.
\end{align}}
\end{remark}

\noindent
For completeness, we present a more detailed version of the proof appearing in \cite[p.~88]{AlonSpencer2016}
and \cite[Theorem~2.3]{AlonL89}.
\begin{proof}
We may assume that all out-degrees are equal to $\delta$. Indeed, by deleting arcs if
necessary, we obtain a spanning subgraph of $D$ in which every vertex has out-degree
$\delta$, while the maximum in-degree remains at most $\Delta$.
Any directed cycle in this spanning subgraph is also present in $D$; in particular,
so is any directed cycle whose length is divisible by $k$.

We use the LLL to show that there exists a vertex coloring
$c \colon V \to [k]$ such that every vertex $v \in V$ has an out-neighbor $u$
with $c(u) \equiv c(v) + 1 \pmod{k}$.

We first show that the existence of such a $k$-coloring implies the theorem. Let
$c \colon V \to [k]$ be such a coloring. Since $D$ is finite and every
vertex has out-degree at least~1, we may construct a directed walk as follows.
Start at some vertex $v_1 \in V$, and for each $i \geq 1$, choose an out-neighbor
$v_{i+1}$ of $v_i$ with $c(v_{i+1}) \equiv c(v_i) + 1 \pmod{k}$. Since $D$
is finite, some vertex must repeat. Let $j$ be the smallest index such that
$v_j = v_\ell$ for some $\ell < j$. Consider the directed cycle
\[
v_\ell \to v_{\ell+1} \to \cdots \to v_j = v_\ell.
\]
Then,
\begin{align*}
c(v_j) \equiv c(v_{j-1}) + 1 \equiv \cdots \equiv c(v_\ell) + (j-\ell) \pmod{k},
\end{align*}
and therefore
\[
(j-\ell) \equiv 0 \pmod{k}.
\]
Hence, the constructed directed cycle in $D$ has length $j-\ell$, which is divisible by $k$.

Next, we use the LLL to prove the existence of such a $k$-coloring.
Let $N_{+}(v)$ and $N_{-}(v)$ denote the sets of out-neighbors and in-neighbors of a vertex $v$ in
a directed graph $D$, respectively. These are referred to as the \emph{open out-neighborhood} and
\emph{open in-neighborhood} of $v$. Furthermore, define
\[
N_{+}[v] := N_{+}(v) \cup \{v\}, \qquad
N_{-}[v] := N_{-}(v) \cup \{v\},
\]
which are the \emph{closed out-neighborhood} and \emph{closed in-neighborhood} of $v$, respectively.

We next show that there exists a coloring $c \colon V \to [k]$ such that every vertex $v \in V$ has
an out-neighbor $u \in N_{+}(v)$ satisfying $c(u) \equiv c(v) + 1 \pmod{k}$.
Consider a random coloring $c \colon V \to [k]$, where each vertex of $D$ is assigned a color
independently and uniformly at random (each color with probability $\frac1k$).
For each $v \in V$, let $A_v$ denote the event that $c(u) \not\equiv c(v) + 1 \pmod{k}$ for all
$u \in N_{+}(v)$. We show, using Theorem~\ref{theorem: LLL - symmetric}, that
\begin{align}
\label{eq: 16.3.26}
\mathbb{P}\left( \bigcap_{v \in V} \overline{A_v} \right) > 0,
\end{align}
which gives the desired result. For each $v \in V$, since the colors are assigned independently and
uniformly, we have
\begin{align}
\label{eq1: 15.04.26}
\mathbb{P}(A_v) = \biggl(1 - \frac{1}{k} \biggr)^{\delta},
\end{align}
where, by assumption, $\delta = \lvert N_{+}(v) \rvert$ for all $v \in V$. Moreover, for every $v \in V$,
if the colors of all vertices outside $N_{+}(v)$ are fixed, then the conditional probability of $A_v$
remains $\biggl(1 - \frac{1}{k} \biggr)^{\delta}$.
It therefore follows that $A_v$ is mutually independent of the collection
\begin{align}
\label{eq: cond. on neighborhoods}
\bigl\{ A_u : N_{+}(v) \cap N_{+}[u] = \varnothing \bigr\}.
\end{align}
The condition in \eqref{eq: cond. on neighborhoods} fails if and only if one of the following holds:
\begin{enumerate}
\item $u \in N_{+}(v)$,
\item $N_{+}(u) \cap N_{+}(v) \neq \varnothing$ and $u \neq v$.
\end{enumerate}
The number of vertices $u$ satisfying Condition~1 is $\delta$.
Also, for each $z \in N_{+}(v)$, there are at most $\Delta-1$ vertices $u \neq v$ such that
$u \in N_{-}(z)$. Since there are $\delta$ such vertices $z$, the number of vertices satisfying
Condition~2 is at most $\delta(\Delta-1)$. Therefore, the number of vertices $u \in V$ for which
$N_{+}(v) \cap N_{+}[u] \neq \varnothing$ is at most
\[
\delta+\delta(\Delta-1)=\delta\Delta.
\]
Consequently, by the symmetric version of the LLL (Theorem~\ref{theorem: LLL - symmetric})
with parameters
\[
p = \biggl(1 - \frac{1}{k} \biggr)^{\delta}, \qquad d = \delta \Delta,
\]
it follows that \eqref{eq: 16.3.26} holds provided that \eqref{eq: Alon and Linial, 1989} is satisfied,
which completes the proof.
\end{proof}

A \emph{$d$-regular digraph} is a directed graph in which the in-degree and out-degree
of every vertex are equal to $d$. The next corollary is a general result that includes 
regular digraphs as a special case, whereas the subsequent corollary is stated exclusively 
for regular digraphs.
\begin{corollary}
\label{cor: even-length directed cycles}
{\em Let $D = (V,E)$ be a finite, simple, and directed graph with minimum out-degree
$\delta \geq 1$ and maximum in-degree at most $\Delta \geq 1$. If
\begin{align}
\label{eq: even-length directed cycles}
\Delta \leq \frac{2^\delta}{e \delta},
\end{align}
then the digraph $D$ contains an even-length directed cycle. In particular, this holds
for every $d$-regular digraph with $d \geq 8$.}
\end{corollary}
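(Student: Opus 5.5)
The plan is to specialize Theorem~\ref{theorem: Alon and Linial, 1989} to $k=2$ and to use the relaxed condition of Remark~\ref{remark: Alon and Linial, 1989 - relaxed}, which rests on Corollary~\ref{corollary: Knuth} instead of Theorem~\ref{theorem: LLL - symmetric}. A directed cycle whose length is divisible by $2$ is exactly an even-length directed cycle. So it suffices to check that \eqref{eq: Alon and Linial, 1989 - relaxed} with $k=2$ reads
\[
e\,\delta\,\Delta\,2^{-\delta}\le 1,
\]
which is equivalent to \eqref{eq: even-length directed cycles}.

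To be safe, I will briefly re-verify the relaxation rather than quote the remark. In the proof of Theorem~\ref{theorem: Alon and Linial, 1989} (after reducing to out-degree exactly $\delta$), each bad event $A_v$ has probability $p=(1-\frac1k)^{\delta}$. Each $A_v$ is also mutually independent of all but at most $d=\delta\Delta$ other events, since $u\neq v$ is excluded in the count of Condition~2. Corollary~\ref{corollary: Knuth} needs only $epd\le 1$, and this gives the positivity in \eqref{eq: 16.3.26}. The resulting coloring then yields a cycle of length divisible by $k$, as in that proof.

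For the regular case, a $d$-regular digraph has $\delta=\Delta=d$, so the condition becomes $e\,d^2\le 2^d$. I will show this holds for all $d\ge 8$.
\begin{itemize}
\item At $d=8$: $e\cdot 64\approx 174<256$.
\item Induction step: if $e\,d^2\le 2^d$, then $e(d+1)^2=e\,d^2\,(1+\frac1d)^2\le 2^d\cdot\frac{81}{64}<2^{d+1}$ for $d\ge 8$.
\end{itemize}
One can also check that $d=7$ fails, since $e\cdot 49\approx 133>128$, which explains the threshold $8$.

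There is no real obstacle here. The only care needed is to invoke the relaxed condition with $d=\delta\Delta$ via Corollary~\ref{corollary: Knuth}, rather than the $(\delta\Delta+1)$ version of Theorem~\ref{theorem: Alon and Linial, 1989} as stated. The stated version would require $e(d^2+1)\le 2^d$, which also holds at $d=8$, so the conclusion survives either way. The inequality $\Delta\le 2^\delta/(e\delta)$, however, genuinely needs the relaxed form.
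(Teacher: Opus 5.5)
Your proposal is correct and follows the paper's proof: you specialize Theorem~\ref{theorem: Alon and Linial, 1989} to $k=2$ under the relaxed condition of Remark~\ref{remark: Alon and Linial, 1989 - relaxed} (Corollary~\ref{corollary: Knuth} with $p=2^{-\delta}$ and $d=\delta\Delta$), and for $d$-regular digraphs you check that $e d^2\le 2^d$ holds for all $d\ge 8$. The only difference is cosmetic: the paper reads off the threshold from the real root $x\approx 7.097$ of $2^x=ex^2$, while your base case at $d=8$ plus the induction factor $(9/8)^2<2$ checks the same fact in a fully elementary way.
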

\begin{proof}
The first part follows from Theorem~\ref{theorem: Alon and Linial, 1989}
and Remark~\ref{remark: Alon and Linial, 1989 - relaxed}  with $k=2$. The second part
holds since, for a $d$-regular digraph, $\delta = \Delta = d$, and
\[
\frac{2^x}{e x^2} \geq 1 \quad \iff \quad x \geq 7.09719 \ldots \, ,
\]
so the condition in \eqref{eq: even-length directed cycles} holds for every $d$-regular
digraph if and only if $d \geq 8$.
\end{proof}

\noindent 
The next result is an improved version of \cite[Corollary~2.5]{AlonSpencer2016}.
\begin{corollary}
\label{cor: cycles divisible by k in regular digraphs}
{\em For every finite, simple, and $d$-regular digraph $D$ with $d \geq 1$, and for every integer $k$ satisfying
\begin{align}
\label{eq1: 12.06.26}
k \leq \frac{\sqrt[d]{ed^2}}{\sqrt[d]{ed^2}-1},
\end{align}
there exists a directed cycle in $D$ whose length is divisible by $k$.}
\end{corollary}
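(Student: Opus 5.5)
The plan is to specialize Theorem~\ref{theorem: Alon and Linial, 1989}, in its relaxed form from Remark~\ref{remark: Alon and Linial, 1989 - relaxed}, to the regular case $\delta=\Delta=d$. In a $d$-regular digraph the minimum out-degree and the maximum in-degree both equal $d$. The relaxed sufficient condition \eqref{eq: Alon and Linial, 1989 - relaxed} therefore reads
\begin{align*}
e d^2 \biggl(1-\frac1k\biggr)^{d} \leq 1,
\end{align*}
and I only need to show that \eqref{eq1: 12.06.26} is equivalent to this inequality, at least for integers $k \geq 2$.

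The algebraic step goes as follows. Set $\alpha := \sqrt[d]{ed^2}$. Since $ed^2 \geq e > 1$, we have $\alpha > 1$, so the right-hand side of \eqref{eq1: 12.06.26} is a well-defined number exceeding $1$. For $k \geq 2$, the inequality $ed^2(1-1/k)^d \leq 1$ is equivalent, after taking $d$-th roots (all quantities are positive), to $1 - \frac1k \leq \frac1\alpha$. This is equivalent to $\frac1k \geq \frac{\alpha-1}{\alpha}$, that is, to $k \leq \frac{\alpha}{\alpha-1}$, which is exactly \eqref{eq1: 12.06.26}. Hence every integer $k \geq 2$ satisfying \eqref{eq1: 12.06.26} satisfies \eqref{eq: Alon and Linial, 1989 - relaxed}. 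Remark~\ref{remark: Alon and Linial, 1989 - relaxed} then yields a directed cycle of length divisible by $k$.

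Two edge cases need separate handling. The case $k=1$ is trivial: a finite digraph with minimum out-degree $d \geq 1$ contains a directed cycle, by the walk argument in the proof of Theorem~\ref{theorem: Alon and Linial, 1989}, and every length is divisible by $1$. (For $k=1$ the LLL condition would read $0\le 1$, with $p=0$, which is also fine.) Nonpositive $k$ are excluded implicitly, since $k$ is meant to be a positive integer.

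The main point needing care is the justification of the relaxed condition, i.e., that Corollary~\ref{corollary: Knuth} can replace Theorem~\ref{theorem: LLL - symmetric} in the proof of Theorem~\ref{theorem: Alon and Linial, 1989}. Since that is asserted in Remark~\ref{remark: Alon and Linial, 1989 - relaxed}, I will invoke it directly: the bad events $A_v$ have probability $p=(1-1/k)^d$, and each is mutually independent of all but at most $\delta\Delta=d^2$ others, so $epd' \leq 1$ with $d'=d^2$ suffices. The improvement over \cite[Corollary~2.5]{AlonSpencer2016} comes precisely from using $ed^2$ in place of $e(d^2+1)$.
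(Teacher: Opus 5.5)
Your proposal is correct and is essentially the paper's proof. Like the paper, you specialize the relaxed Alon--Linial condition $e\delta\Delta(1-\tfrac1k)^{\delta}\le 1$ of Remark~\ref{remark: Alon and Linial, 1989 - relaxed} to $\delta=\Delta=d$, take $d$-th roots and rearrange; your extra care about $\sqrt[d]{ed^2}>1$ and the case $k=1$ is a harmless addition.

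One small correction to your closing aside. As Remark~\ref{remark: strengthened result - 12.06.26} shows, the gain over \cite[Corollary~2.5]{AlonSpencer2016} comes mainly from solving $(1-\tfrac1k)^d\le \frac{1}{ed^2}$ exactly rather than through $(1-\tfrac1k)^d\le e^{-d/k}$. This accounts for the asymptotic additive $\tfrac12$ and for the threshold dropping from $d\ge 12$ to $d\ge 8$, whereas replacing $d^2+1$ by $d^2$ contributes only $o(1)$.
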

\begin{proof}
For a $d$-regular digraph, we have $\Delta = \delta = d$.
By Theorem~\ref{theorem: Alon and Linial, 1989} and Remark~\ref{remark: Alon and Linial, 1989 - relaxed},
there exists a directed cycle of length divisible by $k$ if
\[
e d^2 \, \biggl(1-\frac{1}{k} \biggr)^{d} \leq 1.
\]
Taking the $d$-th root of both sides and rearranging terms gives the condition in \eqref{eq1: 12.06.26}.
\end{proof}
\begin{remark}
\label{remark: strengthened result - 12.06.26}
{\em Since the digraph $D$ is, by assumption, finite and every vertex in $D$ has an out-degree at least~1, 
it follows that $D$ contains a directed cycle.   
Corollary~\ref{cor: cycles divisible by k in regular digraphs} is therefore nontrivial if and only if the right-hand side of 
\eqref{eq1: 12.06.26} is at least~2; this occurs if and only if $d \geq 8$. Moreover, it strengthens 
\cite[Corollary~2.5]{AlonSpencer2016}, which asserts the same conclusion under the stronger condition
\begin{align}
\label{eq2: 12.06.26}
k \leq \frac{d}{1+\ln(d^2+1)},
\end{align}
whose right-hand side is strictly smaller than that of \eqref{eq1: 12.06.26}.
For comparison, the right-hand side of \eqref{eq2: 12.06.26} is at least~2 if and only if $d \geq 12$.
It can be verified that the difference between the right-hand sides of \eqref{eq1: 12.06.26} and \eqref{eq2: 12.06.26} 
converges to $\tfrac12$ as $d \to \infty$. Indeed, let $x_d := \frac{1+2 \ln d}{d}$, then $\sqrt[d]{ed^2} = e^{x_d}$ 
and $x_d \to 0$ as we let $d \to \infty$. Hence, the right-hand side of \eqref{eq1: 12.06.26} is 
\[
\frac{\sqrt[d]{ed^2}}{\sqrt[d]{ed^2}-1} = \frac{e^{x_d}}{e^{x_d}-1} = \frac{1}{1 - e^{-x_d}} = \frac{1}{x_d} + \frac12 + o(1),
\]
and the right-hand side of \eqref{eq2: 12.06.26} is 
\[
\frac{d}{1+\ln(d^2+1)} = \frac{d}{1 + 2 \ln d + \ln (1+d^{-2})} = \frac{1}{x_d} + o(1). 
\]
Subtracting yields 
\begin{align}
\label{eq3: 12.06.26}
\lim_{d \to \infty} \, \Biggl(\frac{\sqrt[d]{ed^2}}{\sqrt[d]{ed^2}-1} - \frac{d}{1+\ln(d^2+1)} \Biggr) = \frac12. 
\end{align} 
As an illustration of the improvement in Corollary~\ref{cor: cycles divisible by k in regular digraphs}, if $d \in \{17, 18, 19, 20, 21\}$, 
then \eqref{eq1: 12.06.26} allows $k \in \{2,3\}$, whereas \eqref{eq2: 12.06.26} permits only $k=2$. Consequently, 
every finite, simple, and $d$-regular digraph with $17 \leq d \leq 21$ not only contains a directed cycle of even length, 
but also contains a directed cycle whose length is divisible by~3.}
\end{remark}

We conclude this section by quoting a result on directed cycles in regular digraphs from
\cite[Theorem~2]{AlonMM1996}, whose proof relies on the symmetric version of the LLL
(Theorem~\ref{theorem: LLL - symmetric}).
\begin{theorem}[\hspace*{-0.1cm} \cite{AlonMM1996}]
\label{theorem: Alon, McDiarmid, and M. Molloy, 1996}
{\em Let $G$ be a $d$-regular digraph without parallel edges. Then $G$ contains at least 
$\varepsilon d^2$ pairwise edge-disjoint directed cycles for some absolute constant $\varepsilon > 0$.
In particular, $\varepsilon = \frac{3}{2^{19}}$ suffices.}
\end{theorem}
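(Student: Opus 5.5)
The plan is a greedy extraction of \emph{short} directed cycles. Let $G$ have $n$ vertices, so it has exactly $nd$ arcs and $n \ge d+1$. If we repeatedly find a directed cycle of length $O(n/d)$ and delete its arcs, and we can keep doing this until half of the arcs are gone, then the number of extracted cycles is at least
\[
\frac{nd/2}{O(n/d)} = \Omega(d^2).
\]
These cycles are pairwise edge-disjoint by construction. So the theorem reduces to two ingredients.
\begin{enumerate}
\item[(a)] A robustness step: as long as many arcs remain, the current digraph contains a subgraph with large minimum out-degree.
\item[(b)] A short-cycle step: a digraph on at most $n$ vertices with minimum out-degree $\Omega(d)$ contains a directed cycle of length $O(n/d)$.
\end{enumerate}

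For (a), I would use the standard peeling argument. Suppose the current arc set $E'$ satisfies $|E'| > nd/2$. Repeatedly delete any vertex whose out-degree in the remaining subgraph is less than $d/4$. Each deletion removes fewer than $d/4$ arcs leaving that vertex, and the arcs entering it are arcs leaving surviving vertices. To keep the bookkeeping clean, I would instead work with the undirected-like quantity ``arcs with tail not yet deleted''. Then fewer than $n\cdot d/4 < |E'|$ arcs are lost in total, so the process stops at a nonempty subdigraph $G'$ with minimum out-degree at least $d/4$ on $n' \le n$ vertices. Since in-degrees only decrease, the simplicity and the bound $\Delta \le d$ are preserved.

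For (b), the cleanest route is a Chv\'atal--Szemer\'edi type bound: a simple digraph with minimum out-degree $k$ on $n'$ vertices has a directed cycle of length at most $2n'/(k+1)$. With $k = d/4$ this gives length at most $8n/d$. If one wants to stay within the LLL framework of this paper, I would attempt to replace this with a probabilistic argument. Randomly partition $V(G')$ into $m \approx c\,n/d$ classes. Using the symmetric LLL (Theorem~\ref{theorem: LLL - symmetric}) with a Chernoff bound for each vertex, and with dependency degree at most $\delta\Delta = O(d^2)$ as in the proof of Theorem~\ref{theorem: Alon and Linial, 1989}, one would argue that every vertex keeps $\Omega(d\cdot d/n)$ out-neighbours in a prescribed ``next'' class. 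One would then follow a class-increasing walk as in that proof to close a cycle after $O(n/d)$ steps.

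I expect this LLL replacement to be the main obstacle. When $n \gg d$ the expected number of out-neighbours in a given class drops below a constant, so concentration fails. The honest plan is therefore to use the LLL only in the regime $n = O(d)$, where concentration holds, and to invoke the deterministic short-cycle bound otherwise.

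Finally, I would track the constants $1/4$ and $8$ (or their LLL analogues) to obtain an explicit $\varepsilon$. With the crude choices above this gives roughly $\varepsilon \approx 1/16$ up to lower-order terms. The stated value $\varepsilon = 3/2^{19}$ suggests the original argument loses constants in an LLL-based partition step, which is consistent with the LLL-based route.
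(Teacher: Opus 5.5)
The paper does not prove this theorem; it only quotes it from \cite{AlonMM1996}, so your proposal has to stand on its own. Your counting frame is fine: extracting directed cycles of length $O(n/d)$ until $nd/2$ arcs are used does yield $\Omega(d^2)$ edge-disjoint cycles. Step~(b) is also fine, since it is the Chv\'atal--Szemer\'edi bound.

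\textbf{The gap is step~(a).} When you delete a vertex $v$, you also delete the arcs \emph{entering} $v$, and each of these lowers the out-degree of a surviving vertex. Counting ``arcs with tail not yet deleted'' does not track out-degrees in the induced subdigraph, so the estimate ``fewer than $n\cdot d/4$ arcs are lost'' is unjustified. The claim itself is false. Take the digraph on $[n]$ with arcs $i\to j$ for $i<j\le\min\{i+d,n\}$. It is contained in the $d$-regular circulant on $\mathbb{Z}_n$. It has $nd-\binom{d+1}{2}>nd/2$ arcs when $n>d+1$. But it is acyclic, so it has no nonempty subdigraph of minimum out-degree even~$1$.

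Nor can (a) be rescued by the structure you never use, namely that deleting the arcs of directed cycles from a $d$-regular digraph leaves an Eulerian digraph. Here is an Eulerian counterexample.
\begin{itemize}
\item Fix $b\ge 4$, and let $L\ge\binom{b+1}{2}$ be a common multiple of $1,\dots,b$.
\item For $0\le k<K$, $s\in[b]$ and $0\le r<s$, put $a=kL+\binom{s}{2}+r$ and take the cycle $a\to a+L\to a+L-s\to a+L-2s\to\cdots\to a$.
\item These cycles are pairwise arc-disjoint. A step-$s$ arc $u\to u-s$ lies on the cycle of $a$ only if $u\equiv a\pmod s$ and $a+s\le u\le a+L$. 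Two starting points with the same step and the same residue mod $s$ differ by a nonzero multiple of $L$, so their step-$s$ arcs have disjoint tail ranges.
\item Hence the union is a simple Eulerian digraph with maximum in- and out-degree at most $d:=b+1$. For $L$ large, it has more than $nd/2$ arcs.
\item Every vertex has at most one out-arc to a larger integer. So the smallest vertex of any subdigraph has out-degree at most $1<d/4$ in it.
\end{itemize}
Thus, for $n\gg d$, no subdigraph of minimum out-degree $\Omega(d)$ need exist. Chv\'atal--Szemer\'edi cannot be invoked, and your estimate $\varepsilon\approx 1/16$ has no basis.

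\textbf{What is missing} is a short-cycle lemma proved directly for Eulerian digraphs, without passing through a minimum-out-degree subgraph. Huang, Ma, Shapira, Sudakov and Yuster showed that a simple Eulerian digraph with $n$ vertices and $m$ arcs has a directed cycle of length at most $6n^2/m$; they derive it from a lower bound of order $m^2/n^2$ on feedback arc sets. With that lemma your greedy scheme goes through. While more than $nd/2$ arcs remain, the Eulerian remainder has a cycle of length less than $12n/d$, which gives at least $d^2/24$ edge-disjoint cycles. Without such a lemma, or a genuinely different argument such as the LLL-based one in \cite{AlonMM1996}, the proposal does not prove the theorem.
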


\subsection{Acyclic coloring of graphs}
\label{subsection: Acyclic coloring of graphs}

This last subsection presents a sophisticated application of the LLL (Theorem~\ref{theorem: LLL}),
in the context of graph coloring, appearing in a paper by Alon, McDiarmid, and Reed \cite{AlonMR91}.

Let $G=(V,E)$ be a finite, simple, and undirected graph. 
For $v \in V$, let $N(v) = \bigl\{ u \in V: \{u, v\} \in E \bigr\}$ denote the neighborhood of $v$, 
let $d(v) = |N(v)|$ denote its degree, and let $\Delta =\Delta(G) := \underset{v \in V}{\max} \, d(v)$ 
denote the maximum degree of $G$. For vertices $u, v \in V$, let $\lambda(u,v) := |N(u) \cap N(v)|$
denote the number of common neighbors of $u$ and $v$ in $G$. Recall that a $k$-coloring $f \colon V \to [k]$ 
is called \emph{proper} if $\{u,v\} \in E$ implies that $f(u) \neq f(v)$ (i.e., no two adjacent vertices 
are assigned the same color). A coloring of $G$ is called \emph{acyclic} if it is proper and contains no 
bichromatic cycle (equivalently, every cycle in $G$ receives at least three colors). The minimum number of 
colors required for an acyclic coloring, denoted by $A(G)$, is called the \emph{acyclic chromatic number} 
of $G$. Clearly, $A(G) \geq \chi(G)$, where $\chi(G)$ denotes the chromatic number of $G$, i.e., the minimum 
number of colors required for a proper coloring. Equality holds, for example, if $G$ is an odd cycle, a 
complete graph, or a forest.

Introduced by Gr{\"u}nbaum in the study of planar graphs \cite{Grunbaum73}, the acyclic chromatic number has become an
important topic in graph coloring and structural graph theory. Borodin settled Gr{\"u}nbaum's conjecture by proving that
$A(G) \leq 5$ for every planar graph $G$ \cite{Borodin79}. A major reason for the importance of acyclic colorings is their
close connection to sparse graph classes. These connections have led to numerous applications in structural and algorithmic
graph theory, including graph decompositions, graph homomorphisms, and efficient algorithms for sparse graphs. Acyclic
colorings also play a role in the study of graph layouts, graph drawing, and graphs embedded on surfaces (see, e.g.,
\cite{NesetrilM2012}).

In \cite{AlonMR91}, it is proved that there exists an absolute positive constant $C$ such that every graph
of maximum degree $\Delta$ admits an acyclic coloring with at most $C \Delta^{4/3}$ colors (in \cite[Theorem~1.1]{AlonMR91},
$C=50$, though no attempt was made to optimize that constant). The main tool in proving this result in \cite{AlonMR91}
is the LLL (Theorem~\ref{theorem: LLL}), and a key innovation in \cite{AlonMR91} is the introduction of special
pairs of vertices and a carefully chosen family of bad events that allow the LLL to yield an upper bound of order $\Delta^{4/3}$.
This order of growth in $\Delta$ is shown to be nearly optimal. More concretely, letting
$B(\Delta) := \sup \bigl\{ A(G): \Delta(G) = \Delta \bigr\}$, for $\Delta \in \mathbb{N}$, \cite[Theorem~1.2]{AlonMR91}
establishes the asymptotic lower bound
\begin{align}
\label{eq: asymptotic LB of A(G)}
B(\Delta) = \Omega \left(\frac{\Delta^{\frac{4}{3}}}{\sqrt[3]{\log \Delta}} \right).
\end{align}
Following standard notation, let the complete bipartite graph with vertex classes of sizes $a$ and $b$ be denoted by $K_{a,b}$.
The proof of \eqref{eq: asymptotic LB of A(G)} relies on a probabilistic construction that gives, almost surely, a
graph $G$ with maximum degree $\Delta$, containing no copy of $K_{2, \gamma}$, where $\gamma = O(\Delta^{2/3} \, \sqrt[3]{\log \Delta})$,
and whose acyclic chromatic number $A(G)$ scales like $\Delta^{4/3} \bigl(\log \Delta \bigr)^{-1/3}$. This contrasts with planar graphs,
which may have arbitrarily large maximum degree, yet whose acyclic chromatic number is at most~5.

The next result is derived in \cite{AlonMR91} via an application of the LLL, and the presentation here follows
the main ideas of the proof.
\begin{theorem}[Theorem~1.1 of \cite{AlonMR91}]
\label{theorem: AlonMR91}
{\em For every graph finite, simple, and undirected graph $G$ of maximum degree $\Delta$, its acyclic chromatic number satisfies
\begin{align}
\label{eq: AlonMR91}
A(G) \leq \lceil 50 \, \Delta^{\frac{4}{3}} \rceil.
\end{align}}
\end{theorem}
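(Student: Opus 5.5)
We follow the Alon--McDiarmid--Reed strategy. Set $x := \lceil 50\,\Delta^{4/3}\rceil$ and color each vertex of $G$ independently and uniformly at random from $[x]$. For small $\Delta$ (say $\Delta \leq$ some constant) the bound is trivial, since a greedy proper coloring of the square $G^2$ uses at most $\Delta^2+1 \leq 50\Delta^{4/3}$ colors and is automatically acyclic. A coloring of $G^2$ is proper in $G$, and any bichromatic cycle would force two vertices of the same color at distance two. So assume $\Delta$ is large.

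\textbf{Special pairs.} Call a pair $\{u,v\}$ of distinct vertices \emph{special} if $\lambda(u,v) > \Delta^{2/3}$. Each vertex $u$ lies in at most $\Delta^{4/3}$ special pairs. Indeed, counting paths of length two from $u$ gives $\sum_v \lambda(u,v) \leq \Delta^2$, so fewer than $\Delta^{2}/\Delta^{2/3} = \Delta^{4/3}$ vertices $v$ can satisfy $\lambda(u,v) > \Delta^{2/3}$.

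\textbf{Bad events.} We introduce four families:
\begin{itemize}
\item Type~I: for each edge $\{u,v\}$, the event that $u$ and $v$ receive the same color.
\item Type~II: for each special pair $\{u,v\}$, the same event.
\item Type~III: for each path $u\!-\!w\!-\!v$ with $\{u,v\}$ not special, and each further path through disjoint vertices in a suitable configuration, events that make a $4$-cycle bichromatic. Concretely, for each $4$-cycle $u_1u_2u_3u_4$ whose diagonals are non-special pairs, the event $c(u_1)=c(u_3)$ and $c(u_2)=c(u_4)$.
\item Type~IV: for each cycle of length $2\ell$ with $\ell\geq 3$, the event that it is properly bichromatic.
\end{itemize}
Avoiding all four families gives an acyclic coloring. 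Type~I gives properness. Any bichromatic cycle must be even. A bichromatic $4$-cycle has one of its diagonals special, which Type~II forbids, or has both diagonals non-special, which Type~III forbids. Longer cycles are excluded by Type~IV.

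\textbf{Probabilities.} Types~I and~II have probability $1/x$. Type~III has probability $1/x^2$. A cycle of length $2\ell$ is bichromatic with probability $1/x^{2\ell-2}$.

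\textbf{Dependency counts.} Each event is determined by the colors of its vertex set, so events on disjoint vertex sets are mutually independent in the variable sense. For a vertex $v$ we bound how many events of each type contain $v$:
\begin{itemize}
\item Type~I: at most $\Delta$.
\item Type~II: at most $\Delta^{4/3}$.
\item Type~III: at most $O(\Delta^{2}\cdot\Delta^{2/3})=O(\Delta^{8/3})$. Choose the path $v\!-\!w\!-\!u$ in $\Delta^2$ ways; since $\{v,u\}$ is non-special, the fourth vertex ranges over at most $\Delta^{2/3}$ common neighbors.
\item Type~IV with length $2\ell$: at most $\Delta^{2\ell-1}$.
\end{itemize}
An event of size $s$ therefore shares a vertex with at most $s$ times these numbers of events of each type.

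\textbf{Applying the LLL.} We use Theorem~\ref{theorem: LLL} with $x$-values $y_1,y_2$ proportional to the respective probabilities, namely $y_1=y_2=2/x$, $y_3=2/x^2$, and $y_{4,\ell}=2^{\ell}/x^{2\ell-2}$ or similar. It then suffices to check
\begin{align*}
\mathbb{P}(A) \leq y_A \prod (1-y_B) \quad\text{for each event } A.
\end{align*}
Using $\prod (1-y_B) \geq \exp\bigl(-2\sum y_B\bigr)$ when the $y_B$ are small, this reduces to showing
\begin{align*}
s\Bigl(\Delta\cdot\tfrac{2}{x}+\Delta^{4/3}\tfrac{2}{x}+\Delta^{8/3}\tfrac{2}{x^2}+\sum_{\ell\geq3}\Delta^{2\ell-1}\tfrac{2^\ell}{x^{2\ell-2}}\Bigr)\leq \tfrac{\ln 2}{2}\cdot\tfrac{1}{\text{const}},
\end{align*}
with the weight per unit size chosen so the total is at most $\ln 2$. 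The constant $50$ makes each term small:
\begin{itemize}
\item $\Delta^{4/3}/x\leq 1/50$.
\item $\Delta^{8/3}/x^2\leq 1/2500$.
\item $\Delta^{2\ell-1}/x^{2\ell-2} \leq \Delta^{1-(2\ell-2)/3}50^{-(2\ell-2)}$, which is summable and small for $\ell\geq 3$ because $2\ell-1-\tfrac43(2\ell-2)\leq 0$ once $\ell\geq 3$ (it equals $-1/3$ at $\ell=3$).
\end{itemize}
The factor $s=2\ell$ multiplying the Type~IV counts is absorbed by the geometric decay in $50^{-2\ell}$.

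\textbf{Main obstacle.} We expect the bookkeeping of this last verification to be the hardest part. One must choose the $y$-values, especially $y_{4,\ell}$, so that the inequality holds uniformly in $\ell$ and in the event size simultaneously, and ensure the sum over all $\ell$ converges with room to spare under the constant $50$. The structural step is confirming that Type~III captures every $4$-cycle not handled by Type~II, which requires $\ell\geq3$ for Type~IV and thus the $\Delta^{2/3}$ threshold. That step is conceptually the crux, but it is straightforward once the threshold is fixed.
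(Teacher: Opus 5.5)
Your proof is correct, but it excludes long bichromatic cycles differently from the paper. The paper, following Alon--McDiarmid--Reed, uses one finite-size family: bichromatic induced paths on five vertices, each with probability $k^{-3}$ and at most $3\Delta^4$ of them per vertex. It then needs an extra structural step: a shortest bichromatic cycle is induced, so any bichromatic cycle of length at least $6$ contains a bichromatic induced five-vertex path. You instead take one event per even cycle of length $2\ell\ge 6$. This makes sufficiency immediate, with no minimality or inducedness argument. The price is an unbounded range of event sizes, which is why the weights $y_{4,\ell}=2^{\ell}/x^{2\ell-2}$ must grow with $\ell$. A $2\ell$-cycle meets about $2\ell(\Delta+\Delta^{4/3})$ edge and special-pair events, so its neighborhood factor decays like $e^{-c\ell}$, and the $2^{\ell}$ pays for that. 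A side benefit is that your long-cycle contributions are $O(\Delta^{-1/3})$, while the paper's path term $3\Delta^4\cdot 2/k^3$ is bounded but does not decay in $\Delta$. Your final verification is only sketched, but it closes with your choices. The per-vertex sum is $\Sigma\le \frac{2}{50}+\frac{2}{50}+\frac{2}{2500}+O(50^{-4})\approx 0.081$. Using $\prod(1-y_B)\ge e^{-2\sum y_B}$, the binding case is the four-vertex Type~III event, which needs $8\Sigma\le \ln 2$ ($0.646<0.693$). Type~I/II needs $4\Sigma\le\ln 2$, and Type~IV needs $4\ell\Sigma\le \ell\ln 2$; all hold. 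You should write this check out explicitly rather than leave it as ``bookkeeping.'' No largeness of $\Delta$ is needed, so your $G^2$ fallback is valid but unnecessary.
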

\begin{proof}
Let each vertex in $G$ be assigned a color from $[k]$ independently and uniformly at random, where
$k := \lceil 50 \, \Delta^{4/3} \rceil$.
The central idea is to distinguish pairs of nonadjacent vertices having many common neighbors. A pair of nonadjacent vertices,
$\{u,v\} \notin E$, is called \emph{a special pair} if $\lambda(u,v) > \Delta^{2/3}$.
\begin{lemma}
\label{lemma: special pairs}
{\em For every vertex $u \in V$,
\begin{align}
\label{eq: special pairs}
\bigl\vert \{v:\{u,v\}\text{ is a special pair in $G$}\} \bigr\vert < \Delta^{\frac{4}{3}}.
\end{align}}
\end{lemma}
\begin{proof}
The number of all length-$2$ paths in $G$ is at most $\Delta (\Delta-1) < \Delta^2$, and every special pair contributes
more than $\Delta^{2/3}$ such paths. Let $u \in V$ and let $s=s(u)$ denote the number of special partners of $u$.
Since each special partner of $u$ contributes more than $\Delta^{2/3}$ paths of length~$2$ having $u$ as an endpoint,
we have $s \Delta^{2/3} < \Delta^2$, and therefore $s < \Delta^{4/3}$.
\end{proof}
\noindent
To apply the LLL, four types of bad events are introduced for such a coloring $f \colon V \to [k]$.
\begin{enumerate}[(1)]
\item Monochromatic edges: Let $e = \{u,v\} \in E$. Define $A = A_e := \bigl\{ f(u)=f(v)\}$, whose probability is
$\mathbb{P}(A) = \frac1k$.
\item Bichromatic induced paths of length four: Let $u_1-u_2-u_3-u_4-u_5$ be an induced $4$-path. Define
$B := \bigl\{f(u_1) = f(u_3) = f(u_5), \; f(u_2) = f(u_4) \bigr\}$, whose probability is $\mathbb{P}(B) = \frac1{k^3}$.
\item Bichromatic induced cycles of length four: Let $u_1-u_2-u_3-u_4-u_1$ be an induced $4$-cycle such that neither
of the opposite pairs, $\{u_1,u_3\}$ or $\{u_2,u_4\}$, is a special pair. Define the event
$C := \{f(u_1) = f(u_3), \, f(u_2) = f(u_4) \}$, whose probability is $\mathbb{P}(C) = \frac1{k^2}$.
\item For every special pair $p = \{u,v\}$ (recall that $p \notin E$), define $D = D_p := \{f(u)=f(v)\}$. Then,
$\mathbb{P}(D) = \frac1k$.
\end{enumerate}
The next lemma shows that excluding these four types of bad events guarantees that the coloring is acyclic.
\begin{lemma}
\label{lemma: 4 types of events suffice}
{\em If none of the bad events of Types~1--4 occurs, then the coloring is acyclic.}
\begin{proof}
Since no Type~1 event occurs, the vertex coloring in $G$ is proper.

Suppose, for the sake of contradiction, that there exists a bichromatic cycle, and let $C$ be a shortest such cycle.
Since the coloring is proper, $C$ has even length. Hence it suffices to consider the cases $|C|=4$ and $|C|\geq 6$.
Furthermore, $C$ is induced; otherwise, a chord of $C$ together with a subpath of $C$ would yield a shorter bichromatic
cycle, contradicting the choice of $C$. We next consider the following possibilities and rule out each of them.
\begin{enumerate}[(a)]
\item Suppose that $C$ is of length at least 6. Take five consecutive vertices
$u_1 \text{ -- } u_2 \text{ -- } u_3 \text{ -- } u_4 \text{ -- } u_5$
on the cycle. Since the cycle is bichromatic, $f(u_1)=f(u_3)=f(u_5)$ and $f(u_2)=f(u_4)$.
Since $C$ is induced, these vertices form an induced path. Thus a Type-2 event occurs,
which leads to a contradiction.
\item Suppose that $C$ is a cycle of length~4, denoted by
$u_1 \text{ -- } u_2 \text{ -- } u_3 \text{ -- } u_4 \text{ -- } u_1$.
Since $C$ is bichromatic, we have $f(u_1)=f(u_3)$ and $f(u_2)=f(u_4)$. If one opposite
pair is special, then a Type~4 event occurs. Otherwise, neither opposite pair is special
and a Type~3 event occurs, leading again to a contradiction.
\end{enumerate}
Hence no bichromatic cycle exists, so the coloring is acyclic.
\end{proof}
\end{lemma}

The next lemma appears as \cite[Lemma~2.4]{AlonMR91}, and its proof is omitted here.
\begin{lemma}[Lemma~2.4 in \cite{AlonMR91}]
\label{lemma: containedness in the 4 types of events}
{\em A vertex in $V$ belongs to at most $\Delta$ events of Type~1, $3\Delta^4$ events of Type~2,
$\Delta^{8/3}$ events of Type~3, and $\Delta^{4/3}$ events of Type~4.}
\end{lemma}

Consider a dependency graph whose vertices are the Type~1--4 bad events, where two such events are
adjacent whenever their corresponding subsets of vertices of $G$ intersect.
In light of Lemma~\ref{lemma: containedness in the 4 types of events}, consider the following asymmetric matrix:
\begin{align}
\label{eq: M matrix}
\mathbf{M}=
\begin{pmatrix}
2\Delta & 6\Delta^4 & 2\Delta^{8/3} & 2\Delta^{4/3}\\
5\Delta & 15\Delta^4 & 5\Delta^{8/3} & 5\Delta^{4/3}\\
4\Delta & 12\Delta^4 & 4\Delta^{8/3} & 4\Delta^{4/3}\\
2\Delta & 6\Delta^4 & 2\Delta^{8/3} & 2\Delta^{4/3}
\end{pmatrix},
\end{align}
whose $(i,j)$ entry, with $1 \leq i,j \leq 4$, is an upper bound on the number of Type~$j$ events that are adjacent to
a fixed Type~$i$ event in the dependency graph (see \cite[Lemma~2.5]{AlonMR91}).

For the purpose of using the LLL, the following values are assigned to the four types of events as above:
\begin{align}
\label{eq: x values for LLL}
x_A=x_D=\frac2k,  \qquad x_B=\frac2{k^3},  \qquad  x_C=\frac2{k^2}.
\end{align}
Recall that the probabilities of these bad events are given by $\mathbb{P}(A) = \frac1k = \mathbb{P}(D)$,
$\mathbb{P}(B) = \frac1{k^3}$, and $\mathbb{P}(C) = \frac1{k^2}$.
By replacing the exact dependency counts with the upper bounds given by the matrix $\mathbf{M}$ in
\eqref{eq: M matrix}, the conditions in \eqref{eq1: condition} are implied by the following three inequalities:
\begin{align}
\label{eq1: 03.06.26}
& \frac1k \leq \frac2k \, \Biggl(1-\frac2k\Biggr)^{2\Delta+2\Delta^{4/3}} \, \Biggl(1-\frac2{k^3}\Biggr)^{6\Delta^4} \,
\Biggl(1-\frac2{k^2}\Biggr)^{2\Delta^{8/3}}, \\[0.1cm]
\label{eq2: 03.06.26}
& \frac1{k^3} \leq \frac2{k^3} \, \Biggl(1-\frac2k\Biggr)^{5\Delta+5\Delta^{4/3}} \,
\Biggl(1-\frac2{k^3}\Biggr)^{15\Delta^4} \, \Biggl(1-\frac2{k^2}\Biggr)^{5\Delta^{8/3}}, \\[0.1cm]
\label{eq3: 03.06.26}
& \frac1{k^2} \leq \frac2{k^2} \, \Biggl(1-\frac2k \Biggr)^{4\Delta+4\Delta^{4/3}} \,
\Biggl(1-\frac2{k^3}\Biggr)^{12\Delta^4} \Biggl(1-\frac2{k^2}\Biggr)^{4\Delta^{8/3}}.
\end{align}
These inequalities are verified in \cite[p.~282]{AlonMR91} to hold for
$k=\lceil 50\Delta^{4/3}\rceil$. By the LLL, with positive probability,
none of the Type~1--4 bad events occurs. Hence, by Lemma~\ref{lemma: 4 types of events suffice},
there exists an acyclic coloring of $G$.
\end{proof}

\begin{remark}[Key idea in the proof of Theorem~\ref{theorem: AlonMR91}]
\label{remark: key idea - acyclic coloring}
{\em The heart of the argument is the introduction of the threshold $\Delta^{2/3}$
for the number of common neighbors for special pairs. By Lemma~\ref{lemma: special pairs}, 
the number of special pairs incident to a vertex is at most $\Delta^{4/3}$,
while every non-special pair of nonadjacent vertices has at most $\Delta^{2/3}$ 
common neighbors by definition, and therefore participates in at most $\Delta^{2/3}$ 
relevant induced $4$-cycles. The equality $\Delta^{2/3} \cdot \Delta^{4/3}=\Delta^2$ 
is exactly what makes the LLL inequalities compatible with a color set of size that 
scales like $\Delta^{4/3}$, producing the celebrated bound in \eqref{eq: AlonMR91}.}
\end{remark}

The LLL (Theorem~\ref{theorem: LLL}) has another noteworthy application in \cite{AlonMR91},
leading to the following result.
\begin{theorem}[Theorem~1.3 of \cite{AlonMR91}]
\label{theorem - AlonMR91, Theorem 1.3}
{\em Let $G$ be a finite, simple, and undirected graph with maximum degree $\Delta \geq 1$,
and suppose that for some integer $\gamma \geq 1$, the graph $G$ contains no copy of $K_{2, \gamma+1}$
in which the two vertices in the class of size~2 are nonadjacent. Then,
\begin{align}
\label{eq: AlonMR91, Theorem 1.3}
A(G) \leq \bigl\lceil \, 32 \sqrt{\gamma} \, \Delta \bigr\rceil.
\end{align}}
\end{theorem}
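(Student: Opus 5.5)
Let $k := \lceil 32\sqrt{\gamma}\,\Delta \rceil$ and color every vertex of $G$ independently and uniformly from $[k]$. I then apply the asymmetric LLL (Theorem~\ref{theorem: LLL}) with only two types of bad events, in the spirit of the proof of Theorem~\ref{theorem: AlonMR91}.

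\textbf{Bad events.} The hypothesis says every pair of nonadjacent vertices has at most $\gamma$ common neighbors. So special pairs are unnecessary, and Type~3 and Type~4 events can be merged into one family of even cycles. The two types are:
\begin{enumerate}[(1)]
\item \emph{Type~1:} for each edge $\{u,v\}\in E$, the event $A_{uv}=\{f(u)=f(v)\}$, with probability $\frac1k$.
\item \emph{Type~2:} for each even cycle $C$ of length $2\ell\ge 4$, the event $B_C$ that $C$ is properly bichromatic. Its probability is at most $k^{-(2\ell-2)}$.
\end{enumerate}
If no event occurs, the coloring is proper and has no bichromatic cycle, so it is acyclic. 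This step is immediate.

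\textbf{Counting dependencies.} Each event is determined by the colors on its vertex set. I use the dependency graph in which two events are adjacent when their vertex sets intersect. The count I need is the number of $2\ell$-cycles through a fixed vertex $v$; I will bound it by about $\ell\,\Delta^{2\ell-2}\gamma^{\ell-1}$, up to lower-order factors.
\begin{enumerate}[(a)]
\item Walk along the cycle from $v$ and choose the vertices in pairs.
\item After fixing the vertex $w$ two steps ahead, its predecessor is a common neighbor of two vertices at distance $2$ on the cycle.
\item If those two vertices are nonadjacent, the predecessor can be chosen in at most $\gamma$ ways rather than $\Delta$.
\item Chorded cycles need care. I will restrict Type~2 to induced cycles, and then those two vertices are nonadjacent when $2\ell\ge 6$.
\item This restriction is legitimate as in Lemma~\ref{lemma: 4 types of events suffice}: a shortest bichromatic cycle is induced.
\end{enumerate}
Counting this way gives about $\Delta^{2}$ choices per two steps with one step costing only $\gamma$, i.e.\ roughly $(\Delta^2\gamma)^{\ell-1}$ per cycle, times $\ell$ for the position of $v$. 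Each edge event then meets at most $2\Delta$ edge events and at most $2\ell\,\Delta^{2\ell-2}\gamma^{\ell-1}$ cycle events of length $2\ell$. A $2\ell$-cycle event multiplies these counts by $\ell$.

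\textbf{Choice of LLL weights and verification.} I set $x_A = \frac2k$ and $x_{B_C} = \frac{c^{\ell}}{k^{2\ell-2}}$ for a suitable constant $c$, e.g.\ $c=2$. The LLL condition for an edge event then reduces to bounding
\[
\sum_{\ell\ge2} 2\ell\,\Delta^{2\ell-2}\gamma^{\ell-1}\,\frac{c^\ell}{k^{2\ell-2}} \;\le\; \sum_{\ell\ge 2} 2\ell\, c^\ell\Big(\tfrac{1}{32^2}\Big)^{\ell-1},
\]
using $k\ge 32\sqrt\gamma\,\Delta$. This is a small geometric-type series, and it keeps $\prod(1-x_j)\ge \frac12$. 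The cycle-event condition is similar, with an extra factor $\ell$ that the geometric decay absorbs. I use $1-x\ge e^{-2x}$ for small $x$ to convert the products to exponentials of these sums.

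\textbf{Main obstacle.} The main obstacle is the cycle-counting step: getting the factor $\gamma^{\ell-1}$ rigorously, with the right handling of non-induced cycles and of short ($4$-)cycles. For $4$-cycles the opposite vertices are nonadjacent by inducedness, so the count $\Delta^2\gamma$ still holds. If inducedness causes trouble, I will instead count paths of length $2$ with nonadjacent endpoints explicitly, as in Lemma~\ref{lemma: special pairs}. The constant $32$ should leave ample slack in the final numerical check.
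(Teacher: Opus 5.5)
The paper gives no proof of this theorem and refers to \cite[Section~3]{AlonMR91}. Your architecture is the right one: a uniform random $k$-coloring with $k=\lceil 32\sqrt{\gamma}\,\Delta\rceil$, monochromatic-edge events plus bichromatic \emph{induced} even cycles of every length, and the asymmetric LLL with weights $2/k$ and $c^{\ell}/k^{2\ell-2}$. Your LLL arithmetic also closes once a correct cycle count is available. The gap is exactly the step you flag as the main obstacle, and the mechanism you propose there does not work. Choosing the vertex $w$ two steps ahead and then its predecessor among at most $\gamma$ common neighbors saves nothing at interior steps. The vertex $w$ ranges over the second neighborhood of the current vertex, which can have $\Delta(\Delta-1)$ vertices whatever $\gamma$ is. So each such pair costs $\Delta^{2}\gamma$ in your count, which is worse than the trivial $\Delta^{2}$ for choosing the two vertices in order. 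Carried out literally over all pairs, including the closing pair that ends at $v$, your scheme gives $\Delta^{2\ell-2}\gamma^{\ell}$ cycles. The edge-event sum then has an $\ell=2$ term of about $\gamma/64$, which is not bounded uniformly in $\gamma$. Closing instead with a factor $\Delta$ leaves an uncontrolled factor $\Delta$. So steps (a)--(c) do not establish your bound $\ell\,\Delta^{2\ell-2}\gamma^{\ell-1}$, and $\gamma^{\ell-1}$ is not the right quantity to aim for.

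The hypothesis should be used once per cycle, at the closing vertex. Fix $v=v_1$ on an induced cycle $v_1v_2\cdots v_{2\ell}v_1$ with $\ell\ge 2$. Choose $v_2,\dots,v_{2\ell-1}$ successively along the cycle, in at most $\Delta(\Delta-1)^{2\ell-3}\le\Delta^{2\ell-2}$ ways. Then $v_{2\ell}$ is a common neighbor of $v_1$ and $v_{2\ell-1}$. These two vertices are distinct and nonadjacent because the cycle is induced of length at least $4$, so there are at most $\gamma$ choices. Hence at most $\gamma\,\Delta^{2\ell-2}$ induced $2\ell$-cycles pass through $v$; this implies your claimed bound, since $\gamma\le\gamma^{\ell-1}$. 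With $k\ge 32\sqrt{\gamma}\,\Delta$ you get $\gamma\,\Delta^{2\ell-2}k^{-(2\ell-2)}\le 32^{-(2\ell-2)}\gamma^{2-\ell}\le 32^{-(2\ell-2)}$. Your verification then goes through with $c=2$: the product for an edge event is at least about $e^{-0.27}>\tfrac12$, and for a $2\ell$-cycle event at least about $e^{-0.27\ell}>2^{-\ell}$. This also explains the scaling: only the $4$-cycle term $\gamma\Delta^{2}/k^{2}$ forces $k$ to be of order $\sqrt{\gamma}\,\Delta$, and longer cycles contribute geometrically less.
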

\smallskip
The reader is referred to \cite[Section~3]{AlonMR91} for a proof of Theorem~\ref{theorem - AlonMR91, Theorem 1.3}.
In its setting, $A(G)$ scales at most linearly with $\Delta$. Hence, if the girth
(length of shortest cycle) of $G$ is at least~5, then $A(G) = O(\Delta)$.

\smallskip
We close this subsection by presenting a tightened version of the upper bound on
the acyclic chromatic number in Theorem~\ref{theorem: AlonMR91}.
\begin{proposition}
\label{proposition: AlonMR91 tightening}
{\em For every finite, simple, and undirected graph $G$ of maximum degree $\Delta$, its acyclic chromatic number satisfies
\begin{align}
\label{eq: AlonMR91 - tightened}
A(G) \leq \lceil \, 20.571 \; \Delta (\sqrt[3]{\Delta}+1) \, \rceil.
\end{align}}
\end{proposition}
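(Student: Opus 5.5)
The plan is to rerun the proof of Theorem~\ref{theorem: AlonMR91} with three changes: a smaller palette, free weights in place of the constants $2/k$, $2/k^2$, $2/k^3$, and a careful optimization of the resulting constants.

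\textbf{Setup.} Keep the four families of bad events of Types~1--4, the random uniform coloring from $[k]$, Lemma~\ref{lemma: 4 types of events suffice}, and the dependency counts in the matrix $\mathbf{M}$ of \eqref{eq: M matrix}. Change only the palette size to
\[
k := \bigl\lceil c\,\Delta(\sqrt[3]{\Delta}+1)\bigr\rceil ,
\]
with $c$ to be determined. The motivation is that the Type~1 and Type~4 dependency counts enter the LLL conditions only through the sums $2\Delta+2\Delta^{4/3}$, $5\Delta+5\Delta^{4/3}$ and $4\Delta+4\Delta^{4/3}$. Each of these is an exact multiple of $\Delta(\Delta^{1/3}+1)$, so with this $k$ the ratio $(\Delta+\Delta^{4/3})/k$ is at most $1/c$ exactly. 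Since also $k\ge c\Delta^{4/3}$, we get $\Delta^{8/3}/k^2\le 1/c^2$ and $\Delta^4/k^3\le 1/c^3$.

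\textbf{Weights and reduced conditions.} Replace \eqref{eq: x values for LLL} by
\[
x_A=x_D=\frac{\alpha}{k},\qquad x_B=\frac{\beta}{k^3},\qquad x_C=\frac{\gamma}{k^2},
\]
with free parameters $\alpha,\beta,\gamma>1$. Lower-bound each factor via $1-x\ge \exp\bigl(-\tfrac{x}{1-x}\bigr)$, and bound every $x$ by its largest possible value using $k\ge c$ (or $k\ge c\Delta^{4/3}$). The conditions \eqref{eq1: 03.06.26}--\eqref{eq3: 03.06.26}, with $\alpha,\beta,\gamma$ in place of the $2$'s, then reduce to three inequalities in $(\alpha,\beta,\gamma,c)$ alone, independent of $\Delta$:
\[
\ln\alpha \ge \theta\Bigl(\frac{2\alpha}{c}+\frac{6\beta}{c^3}+\frac{2\gamma}{c^2}\Bigr),\qquad
\ln\beta \ge \theta\Bigl(\frac{5\alpha}{c}+\frac{15\beta}{c^3}+\frac{5\gamma}{c^2}\Bigr),\qquad
\ln\gamma \ge \theta\Bigl(\frac{4\alpha}{c}+\frac{12\beta}{c^3}+\frac{4\gamma}{c^2}\Bigr).
\]
Here $\theta=1/(1-\alpha/k)$ is a correction factor close to $1$. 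Because the Type~4 condition has the same row of $\mathbf{M}$ as the Type~1 condition, it coincides with the first inequality.

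\textbf{Optimization and the main obstacle.} Next, minimize $c$ subject to the existence of $\alpha,\beta,\gamma$ satisfying these inequalities. To leading order, the $\beta/c^3$ and $\gamma/c^2$ terms are small. The binding constraint is then the second one, roughly $\ln\beta\approx 5\alpha/c$, coupled with $\ln\alpha\approx 2\alpha/c$. I would solve the system numerically, taking all three inequalities tight at the optimum. I expect this to give $c\approx 20.571$, certified by an explicit admissible triple $(\alpha,\beta,\gamma)$ with every inequality checked with a little slack. The main obstacle is making the $\theta$ correction rigorous uniformly in $\Delta$. For large $\Delta$, $\alpha/k$ is tiny and the slack absorbs it. For small $\Delta$, I would either verify the inequalities directly using the exact value of $k$, or observe that the claimed bound already exceeds a trivial bound on $A(G)$. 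For example, every graph is acyclically colorable with $\Delta(\Delta-1)+1$ colors: color the square $G^2$ greedily, so every bichromatic cycle would need a vertex whose two neighbors share a color, which is impossible. This bound is below $20.571\,\Delta(\sqrt[3]{\Delta}+1)$ for all $\Delta$ up to roughly $9000$, which removes the small cases. The LLL (Theorem~\ref{theorem: LLL}) together with Lemma~\ref{lemma: 4 types of events suffice} then gives an acyclic coloring with $k$ colors.
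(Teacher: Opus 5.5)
Your argument is sound and follows the paper's skeleton: the same bad events, the same matrix $\mathbf{M}$, and the same palette $k=\lceil c\,\Delta(\sqrt[3]{\Delta}+1)\rceil$. It differs only in how the LLL products are estimated, and your prediction of where that leads is wrong.

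The paper keeps the weights $x=2\,\mathbb{P}(\cdot)$ and notes that the Type~2 inequality implies the other two. It then linearizes with Bernoulli's inequality, so everything reduces to $(1-\tfrac{10}{C})(1-\tfrac{30}{C^3})(1-\tfrac{10}{C^2})\ge\tfrac12$. The constant $20.5707\ldots$ is just the root of this. It is an artifact of the fixed weights and of the lossy step $(1-x)^n\ge 1-nx$.

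Your optimization does not land there. Set $R:=\tfrac{\alpha}{c}+\tfrac{3\beta}{c^3}+\tfrac{\gamma}{c^2}$. Then your conditions read $\ln\alpha\ge 2\theta R$, $\ln\beta\ge 5\theta R$, $\ln\gamma\ge 4\theta R$. For $\theta=1$ they are feasible if and only if $cR\ge e^{2R}+e^{4R}/c+3e^{5R}/c^2$ for some $R>0$; the extremal choice is $(\alpha,\beta,\gamma)=(e^{2R},e^{5R},e^{4R})$. This already holds at $c=8$, $R=0.4$, since $3.191\le 3.2$, and the threshold is about $c\approx 7.97$. So your route gives a much better asymptotic constant, at the cost of extra parameters and the $\theta$-correction. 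The paper's route is parameter-free and uniform in $\Delta\ge1$.

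For the statement as posed, you need neither the optimization nor a case split. Take $c=20.571$ and $\alpha=\beta=\gamma=2$. Since $k\ge 2c$ for $\Delta\ge1$, you get $\theta\le(1-2/41.14)^{-1}<1.06$. The binding inequality becomes $\ln 2\ge\theta\,(10/c+10/c^2+30/c^3)$, i.e.\ $0.693\ge 1.06\cdot 0.514$, which holds with room to spare for every $\Delta\ge1$.

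Your small-$\Delta$ fallback is incorrect as written and should be dropped, for two reasons.
\begin{itemize}
\item Greedily coloring $G^2$ gives $\Delta^2+1$ colors, not $\Delta(\Delta-1)+1$, because $v$ can have $\Delta+\Delta(\Delta-1)=\Delta^2$ vertices within distance two.
\item The inequality $\Delta^2+1\le 20.571\,\Delta(\sqrt[3]{\Delta}+1)$ holds only for $\Delta\le 122$, not up to about $9000$. The relevant ratio is $\Delta^{2}/\Delta^{4/3}=\Delta^{2/3}$.
\end{itemize}
If $20.571$ had really been your optimum with only ``a little slack'', this would have left a range of $\Delta$ uncovered. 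It is harmless only because the actual slack at $c=20.571$ is large.
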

\begin{proof}
The proof of Theorem~\ref{theorem: AlonMR91} is not affected by the value of $k$, up to the derivation
of inequalities~\eqref{eq1: 03.06.26}--\eqref{eq3: 03.06.26}. We modify the last part of this proof to
obtain a smaller value of $k$ than $ \lceil 50\Delta^{4/3}\rceil $. Let
$k := \lceil C \, \Delta (\sqrt[3]{\Delta}+1) \rceil$, for some constant $C > 0$.
The validity of inequality~\eqref{eq2: 03.06.26} clearly implies that the other two
inequalities in~\eqref{eq1: 03.06.26} and \eqref{eq3: 03.06.26} hold. Inequality~\eqref{eq2: 03.06.26}
is equivalent to
\begin{align}
\label{eq4: 03.06.26}
\Biggl(1-\frac2k\Biggr)^{5\Delta+5\Delta^{4/3}} \, \Biggl(1-\frac2{k^3}\Biggr)^{15\Delta^4}
\, \Biggl(1-\frac2{k^2}\Biggr)^{5\Delta^{8/3}} \geq \frac12.
\end{align}
Referring to the left-hand side of \eqref{eq4: 03.06.26}, we have
\begin{align}
& \Biggl(1-\frac2k\Biggr)^{5\Delta+5\Delta^{4/3}} \,
\biggl(1-\frac2{k^3}\biggr)^{15\Delta^4} \, \biggl(1-\frac2{k^2}\biggr)^{5\Delta^{8/3}} \nonumber \\[0.05cm]
& \geq \Biggl(1-\frac{10(\Delta^{4/3}+\Delta)}{k}\Biggr) \,
\Biggl(1-\frac{30\Delta^4}{k^3}\Biggr) \, \Biggl(1-\frac{10\Delta^{8/3}}{k^2}\Biggr) \nonumber \\[0.05cm]
\label{eq5: 03.06.26}
& > \Biggl(1-\frac{10}{C}\Biggr) \, \Biggl(1-\frac{30}{C^3}\Biggr) \, \Biggl(1-\frac{10}{C^2}\Biggr),
\end{align}
where the first inequality follows from Bernoulli's inequality $(1-x)^n \geq 1-nx$ for $x \in [0,1]$ and $n \in \mathbb{N}$,
and the last inequality follows from the definition $k = \lceil C (\Delta^{4/3} + \Delta)$, for some $C>0$,
which implies that $k \geq C (\Delta^{4/3} + \Delta)$, $k^2 > C^2 \Delta^{8/3}$, and $k^3 > C^3 \Delta^4$.
It therefore suffices that the right-hand side of \eqref{eq5: 03.06.26} be at least $\tfrac12$
to imply that \eqref{eq4: 03.06.26} holds. The smallest $C$ for which the latter condition is satisfied
is 
\[
C_{\min} = 20.5707 \ldots \;, 
\]
which yields inequality~\eqref{eq: AlonMR91 - tightened}.
\end{proof}
The tightened upper bound in \eqref{eq: AlonMR91 - tightened} improves upon the bound in \eqref{eq: AlonMR91}, 
and the improvement increases from $16\%$ for $\Delta=1$ to a limiting value of $58.9\%$ as $\Delta \to \infty$.

\section{The Moser--Tardos algorithm and the Lov\'{a}sz Local Lemma in the variable setting}
\label{section: Moser--Tardos Algorithm}

In this section, we present a constructive version of the LLL in the variable setting,
due to Moser and Tardos~\cite{MoserTardos10}. In this framework, the bad events depend
on a family of mutually independent random variables, and dependencies among events
arise through shared variables.

While the LLL guarantees the existence of an assignment avoiding all bad events under
suitable conditions, its standard form is non-constructive. The Moser--Tardos algorithm
provides an efficient randomized procedure for finding such an assignment by iteratively
resampling the variables associated with violated events.

The analysis relies on the dependency structure among events, captured by proper witness trees,
and yields an explicit upper bound on the expected number of resampling steps in the algorithm.
This implies that the algorithm terminates almost surely and provides quantitative control over
its expected running time.

\subsection{The variable setting}
\label{subsection: Variable Setting}
Let $\{X_i\}_{i=1}^n$ be mutually independent discrete random variables, where each $X_i$
takes values in a finite set. Let $\mathcal{A} = \{A_1,\dots,A_m\}$ be a finite family of
bad events, where each event $A \in \mathcal{A}$ depends only on a subset of
$\{X_i\}_{i=1}^n$. Denote by $\mathrm{vbl}(A)$ the set of variables on which $A$ depends.
The dependency graph for $\mathcal{A}$ is the graph with vertex set $\mathcal{A}$, in which
two distinct events $A, B \in \mathcal{A}$ are adjacent if and only if
$\mathrm{vbl}(A) \cap \mathrm{vbl}(B) \neq \varnothing$. In that case, we write $B \sim A$.

\subsection{The Lov\'{a}sz Local Lemma in the variable setting}
\label{subsection: LLL - variable setting}
The following result specializes Theorem~\ref{theorem: LLL} to the variable setting, providing
a sufficient condition under which none of the events in $\mathcal{A}$ occurs with positive probability.
\begin{theorem}[Lov\'{a}sz Local Lemma (LLL): Variable Setting]
{\em Suppose there exist numbers $x(A) \in [0,1)$ for each $A \in \mathcal{A}$ such that
\begin{align}
\label{eq: LLL - variable setting}
\mathbb{P}(A) \leq x(A) \prod_{B: \, B \sim A} (1 - x(B))
\qquad \forall \, A \in \mathcal{A}.
\end{align}
Then
\begin{align}
\label{eq2: LLL - variable setting}
\mathbb{P}\left( \bigcap_{A \in \mathcal{A}} \overline{A} \right) > 0.
\end{align}}
\end{theorem}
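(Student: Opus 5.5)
The plan is to reduce this statement directly to Theorem~\ref{theorem: LLL}. The only work is to show that the variable-setting dependency graph is a valid dependency digraph in the sense of Definition~\ref{definition: Dependency digraph}. Concretely, index the events as $A_1,\dots,A_m$ and let $D=([m],E)$ have an arc $(i,j)\in E$ exactly when $j\neq i$ and $\mathrm{vbl}(A_i)\cap\mathrm{vbl}(A_j)\neq\varnothing$, that is, when $A_j\sim A_i$. This digraph is symmetric. With $x_i:=x(A_i)$, hypothesis \eqref{eq: LLL - variable setting} becomes verbatim condition \eqref{eq1: condition}, since $\{j:(i,j)\in E\}=\{j: A_j\sim A_i\}$.

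The key step is to verify the dependency-digraph property. Fix $i$ and let $J:=\{j\neq i:(i,j)\notin E\}$, so that $\mathrm{vbl}(A_j)\cap\mathrm{vbl}(A_i)=\varnothing$ for all $j\in J$. Set $U:=\bigcup_{j\in J}\mathrm{vbl}(A_j)$, which is disjoint from $\mathrm{vbl}(A_i)$. Each $A_j$ with $j\in J$ is measurable with respect to $\sigma(X_\ell:\ell\in U)$, so the whole $\sigma$-algebra $\sigma(\{A_j:j\in J\})$ is contained in $\sigma(X_\ell:\ell\in U)$. Likewise $A_i\in\sigma(X_\ell:\ell\in \mathrm{vbl}(A_i))$. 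Because $X_1,\dots,X_n$ are mutually independent, the $\sigma$-algebras generated by disjoint subfamilies of them are independent. For finitely valued discrete variables this is elementary: the probability of every cylinder event factorizes, and every event in the generated $\sigma$-algebra is a finite disjoint union of cylinders on the relevant coordinates. Hence $\mathbb{P}(A_i\cap B)=\mathbb{P}(A_i)\mathbb{P}(B)$ for all $B\in\sigma(\{A_j:j\in J\})$, which is exactly what Definition~\ref{definition: Dependency digraph} requires.

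With both hypotheses in place, Theorem~\ref{theorem: LLL} yields $\mathbb{P}\bigl(\bigcap_{A}\overline{A}\bigr)\geq\prod_{A\in\mathcal{A}}(1-x(A))$. This product is positive since every $x(A)<1$, which gives \eqref{eq2: LLL - variable setting}.

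The only mildly delicate point is the independence of $\sigma$-algebras generated by disjoint blocks of independent variables. I would justify it explicitly via the finite-range argument above: write $A_i$ as a union of atoms over its coordinates, write $B$ as a union of atoms over the coordinates in $U$, and expand $\mathbb{P}(A_i\cap B)$ as a double sum of products. Everything else is bookkeeping.
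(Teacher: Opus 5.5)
Your reduction is correct and is exactly what the paper means when it introduces this result as a specialization of Theorem~\ref{theorem: LLL} to the variable setting. You also spell out a check the paper leaves implicit: events with disjoint variable sets give a valid dependency digraph, because $\sigma$-algebras generated by disjoint blocks of independent finitely-valued variables are independent, and you handle that correctly. The paper additionally recovers the conclusion constructively from the Moser--Tardos bound on the expected number of resamplings, but your route does not need that machinery.
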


\medskip 
The importance of the variable setting of the LLL stems from the observation that many of its 
applications are naturally formulated in terms of underlying independent random variables. Notable 
examples include CNF satisfiability (Section~\ref{subsection: k-SAT problems}), Ramsey numbers 
(Sections~\ref{subsection: diagonal Ramsey numbers} and~\ref{subsection: off-diagonal Ramsey numbers}), 
and hypergraph coloring (Section~\ref{subsection: Coloring hypergraphs}). The Moser--Tardos algorithm 
gives a constructive counterpart of the LLL in this setting.

\subsection{The Moser--Tardos algorithm}
\label{subsection: MT}
The Moser--Tardos algorithm proceeds as follows~\cite{MoserTardos10}:
\begin{itemize}
\item Sample all variables independently.
\item While there exists an event $A \in \mathcal{A}$ that occurs, select such an $A$ and resample all
the variables in $\mathrm{vbl}(A)$.
\end{itemize}
The procedure terminates when no event in $\mathcal{A}$ occurs.

\subsection{Performance of the Moser--Tardos algorithm}
\label{subsection: performance MT}
\begin{theorem}[\hspace*{-0.12cm} \cite{MoserTardos10}]
\label{theorem: Moser--Tardos}
{\em Under the condition in \eqref{eq: LLL - variable setting}, the Moser--Tardos algorithm terminates almost surely.
Moreover, for each $A \in \mathcal{A}$, the expected number of resamplings of $A$ during the execution of the algorithm
is at most $\frac{x(A)}{1-x(A)}$. Consequently, the expected total number of resamplings until termination is at most
\[
\underset{A \in \mathcal{A}}{\sum} \frac{x(A)}{1-x(A)},
\]
and the assertion in \eqref{eq2: LLL - variable setting} also holds.}
\end{theorem}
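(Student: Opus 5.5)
The plan is to follow the witness-tree argument of Moser and Tardos. First fix the resampling rule: at each step, resample the lowest-indexed occurring event, or use any fixed selection rule. Then build an execution \emph{log} $C(1),C(2),\dots$ listing the events resampled at steps $1,2,\dots$.

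For each step $t$, construct a witness tree $\tau(t)$. Its root is labelled $C(t)$. Scanning $s=t-1,t-2,\dots,1$, whenever $C(s)$ is equal or adjacent (in the dependency graph, i.e., shares a variable) to the label of some current vertex, attach a new child labelled $C(s)$ to a deepest such vertex. Children of a common vertex then carry distinct labels that are not adjacent, so $\tau(t)$ is a \emph{proper} witness tree. Trees $\tau(t)$ for distinct $t$ with the same root label are distinct, since they contain different numbers of copies of that label. Hence the number $N_A$ of resamplings of $A$ satisfies
\[
\mathbb{E}[N_A]=\sum_{\tau} \mathbb{P}(\tau \text{ appears in the log}),
\]
where the sum runs over proper witness trees rooted at $A$.

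\textbf{Key lemma (coupling).} I expect this to be the main obstacle. The claim is that
\[
\mathbb{P}(\tau \text{ appears}) \le \prod_{v \in V(\tau)} \mathbb{P}([v]),
\]
where $[v]$ is the event labelling $v$. The proof uses a resampling table: for each variable $X_i$, fix an infinite i.i.d.\ sequence of values, the $j$-th entry being the value taken at the $j$-th (re)sampling of $X_i$. Process $\tau$ in reverse BFS order, deepest vertices first. If $\tau$ appears, then the event $[v]$ must have held at its resampling time. The values of $\mathrm{vbl}([v])$ at that time are determined by the tree alone: the value of $X_i$ is the $(k+1)$-st table entry, where $k$ counts the vertices deeper than $v$ whose labels involve $X_i$. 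This relies on the construction placing earlier-resampled, variable-sharing events strictly deeper in the tree. Distinct vertices with overlapping variables then read disjoint table entries, so these ``checks'' are independent, and each succeeds with probability $\mathbb{P}([v])$. Care is needed to argue that the depth ordering exactly counts prior resamplings of each variable.

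\textbf{Counting step.} Compare with a Galton--Watson branching process. The root is labelled $A$. Each vertex labelled $B$ independently includes each $C\in\Gamma^+(B)$, that is, $B$ itself together with its neighbours, as a child with probability $x(C)$. A direct computation shows that a given proper tree $\tau$ is generated with probability
\[
\frac{1-x(A)}{x(A)}\prod_{v} x'([v]),
\qquad\text{where}\qquad
x'(B):=x(B)\prod_{C\sim B}(1-x(C)).
\]
By \eqref{eq: LLL - variable setting}, $\mathbb{P}(B)\le x'(B)$. Therefore
\[
\mathbb{E}[N_A]\le \sum_\tau \prod_v x'([v]) \le \frac{x(A)}{1-x(A)}\sum_\tau \mathbb{P}_{\mathrm{GW}}(\tau)\le \frac{x(A)}{1-x(A)}.
\]

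\textbf{Conclusion.} Summing over $A$ bounds the expected total number of resamplings. Finite expectation implies that the algorithm terminates almost surely. At termination no event occurs, so some assignment avoids all events, and since the variables take values in finite sets this assignment has positive probability. This gives \eqref{eq2: LLL - variable setting}.
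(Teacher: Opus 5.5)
Your proposal is correct and follows essentially the same route as the paper: backward-constructed witness trees, injectivity of $t \mapsto \tau(t)$ giving $\mathbb{E}[N_A]=\sum_{\tau}\mathbb{P}(\tau \text{ appears})$, the resampling-table coupling for the Witness Tree Lemma, and the Galton--Watson comparison giving the bound $\frac{x(A)}{1-x(A)}$. In two details your version is actually the sound one: the check must visit the deepest vertices first, as you do, whereas the paper's root-first order makes the root read the entries $X_i^{(0)}$ rather than those it saw at its own resampling step, so the paper's inclusion of $\{\tau \text{ appears}\}$ in the success event fails in general; and when $A^{(s)}=A^{(t)}$, showing that $W_s \neq W_t$ needs your count of copies of the root label, which the paper's case analysis does not supply.
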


\begin{corollary}[Symmetric Moser--Tardos Criterion]
\label{corollary: symmetric-MT}
{\em Suppose that $\mathbb{P}(A) \leq p$ for all $A \in \mathcal{A}$, and that each event in $\mathcal{A}$
is adjacent to at most $d$ other events in the dependency graph. If $ep(d+1) \leq 1$, then the Moser--Tardos
algorithm terminates almost surely. Moreover, if $d \geq 1$, then the expected total number of resamplings
is at most $\frac{|\mathcal{A}|}{d}$.}
\end{corollary}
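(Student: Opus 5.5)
The plan is to reduce the corollary to Theorem~\ref{theorem: Moser--Tardos} by choosing the same uniform weights as in the proof of Theorem~\ref{theorem: LLL - symmetric}. First handle the degenerate case $d=0$. Then every event shares no variable with any other event, so a resampling of $A$ never changes the status of any other event. With the choice $x(A):=\mathbb{P}(A)\le p\le 1/e<1$, condition \eqref{eq: LLL - variable setting} holds trivially, with an empty product. Theorem~\ref{theorem: Moser--Tardos} then gives almost sure termination. Only the termination claim is asserted for $d=0$, so nothing more is needed there.

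For $d\geq 1$, set $x(A):=\frac{1}{d+1}$ for all $A\in\mathcal{A}$; note that $x(A)\in[0,1)$. Next, verify \eqref{eq: LLL - variable setting} exactly as in the chain \eqref{eq1: 05.03.26}--\eqref{eq4: 05.03.26}:
\[
\mathbb{P}(A)\le p\le \frac{1}{e(d+1)} \le \frac{1}{d+1}\Bigl(1-\frac{1}{d+1}\Bigr)^{d} \le x(A)\prod_{B:\,B\sim A}\bigl(1-x(B)\bigr).
\]
The middle inequality uses $(1+1/d)^d<e$. The last inequality uses that $A$ has at most $d$ neighbours in the dependency graph and that each factor $1-x(B)$ lies in $[0,1]$, so dropping factors only increases the product.

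Theorem~\ref{theorem: Moser--Tardos} then yields almost sure termination. It also bounds the expected number of resamplings of each $A$ by
\[
\frac{x(A)}{1-x(A)}=\frac{1/(d+1)}{d/(d+1)}=\frac{1}{d}.
\]
Summing over $\mathcal{A}$ gives the total bound $|\mathcal{A}|/d$.

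There is no genuine obstacle, since the work is all in Theorem~\ref{theorem: Moser--Tardos}. The only points needing care are the $d=0$ case, where $1/d$ is undefined and a different choice of $x(A)$ is used, and checking that "adjacent to at most $d$ other events" in the variable-setting dependency graph matches the product in \eqref{eq: LLL - variable setting}.
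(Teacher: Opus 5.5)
Your proof is correct and follows the paper's argument: set $x(A)=\frac{1}{d+1}$, verify the variable-setting LLL condition through the same chain used for the symmetric LLL, then invoke Theorem~\ref{theorem: Moser--Tardos} to obtain $\mathbb{E}[N_A]\le \frac{1}{d}$ and sum over $\mathcal{A}$. Your separate treatment of $d=0$, taking $x(A)=\mathbb{P}(A)\le 1/e<1$, is a small but real improvement: the paper's proof uses $x(A)=\frac{1}{d+1}$ for every $d$, which equals $1\notin[0,1)$ when $d=0$, so as written it does not actually cover the termination claim in that case.
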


\begin{proof}
We apply Theorem~\ref{theorem: Moser--Tardos} with
\[
x(A) = \frac{1}{d+1}, \qquad A \in \mathcal{A}.
\]
As in the proof of Theorem~\ref{theorem: LLL - symmetric}, the condition $ep(d+1) \leq 1$ implies that
\[
\mathbb{P}(A) \leq x(A) \prod_{B \sim A} (1-x(B)),
\qquad \forall \, A \in \mathcal{A}.
\]
Hence, Theorem~\ref{theorem: Moser--Tardos} applies, and for each $A \in \mathcal{A}$,
\[
\mathbb{E}[N_A] \leq \frac{x(A)}{1-x(A)} = \frac{1}{d},
\]
where $N_A$ denotes the number of times $A$ is selected for resampling (equivalently, the number of
times all the variables in $\mathrm{vbl}(A)$ are resampled due to the selection of $A$) during the execution
of the algorithm. Some variables in $\mathrm{vbl}(A)$ may also be resampled when other events
are selected (due to overlaps among the variables of distinct events), and such resampling may cause $A$
to occur again. Summing over all $A \in \mathcal A$, we obtain
\[
\mathbb{E}\left[\text{total number of resamplings}\right]
= \sum_{A\in\mathcal A} \mathbb{E}[N_A]
\leq \frac{|\mathcal A|}{d}.
\]
In particular, the Moser--Tardos algorithm terminates almost surely.
\end{proof}

\subsection{Proper witness trees}
\label{subsection: proper witness trees}

We introduce the notion of a proper witness tree, which is used in the proof of Theorem~\ref{theorem: Moser--Tardos}.
We consider rooted trees whose vertices are labeled by events in $\mathcal{A}$, writing $[u] \in \mathcal{A}$ for the
label of vertex~$u$.

\begin{definition}
\label{definition: proper witness tree}
{\em A \emph{proper witness tree} is a rooted tree $T$ whose vertices are labeled by events in $\mathcal{A}$ with
the following properties:
\begin{enumerate}
\item For every edge $(u,v)$, where $v$ is a child of $u$, the labels of $u$ and $v$ share a common variable, i.e.,
\[
\mathrm{vbl}([u]) \cap \mathrm{vbl}([v]) \neq \varnothing.
\]
\item For every vertex $u$, the children of $u$ have pairwise distinct labels.
\end{enumerate}}
\end{definition}

\noindent
In Definition~\ref{definition: proper witness tree}, the labels need not be globally unique: the same event may label multiple
vertices in a proper witness tree, as distinctness is only required among the children of any vertex.

\begin{remark}
\label{remark: proper witness trees}
{\em The first condition in Definition~\ref{definition: proper witness tree} reflects the dependency structure: a child corresponds
to an earlier resampling of an event that shares a common variable with its parent, and is therefore relevant to the dependency chain
leading to the root. Resamplings of events depending on disjoint sets of variables may occur during the execution of the algorithm,
but they are included in the proper witness tree only if they are connected to the root through a chain of overlapping
events. In particular, every vertex in the tree is connected to the root via a path of overlapping events.
The second condition ensures that no event appears more than once among the children of the same vertex.}
\end{remark}

\noindent
Let $\mathcal{T}_A$ be the set of finite, proper witness trees whose root is labeled by $A \in \mathcal{A}$, i.e.,
all rooted trees that may arise from the backward construction in some execution, where each vertex is connected
to the root via a path of overlapping events.

\smallskip
\noindent
Fix a random execution of the Moser--Tardos algorithm, and let $A^{(1)},A^{(2)},A^{(3)},\dots$
be the sequence of resampled events. For each resampling step $t$, the proper witness tree $W_t$ is uniquely determined
by the prefix $A^{(1)},\dots,A^{(t)}$ via the following backward construction.

Starting from a root labeled $A^{(t)}$, the indices $t-1, t-2, \dots, 1$ are processed in decreasing order.
A vertex labeled $A^{(s)}$ is added to the tree if and only if $A^{(s)}$ shares a common variable with the label
of a vertex already present in the tree; in that case, it is attached as a child of the deepest (i.e., farthest
from the root) such vertex. If there is more than one such deepest vertex, then a fixed deterministic
tie-breaking rule is used to select one of them.

Every edge in $W_t$ connects two events that share a common variable, so every vertex in $W_t$ is connected
to the root by a path of overlapping events. Hence, $W_t$ is a proper witness tree with root labeled $A^{(t)}$.

We next relate resampling steps to proper witness trees. In counting the number of resamplings of an event, each
proper witness tree can appear at most once during a single execution; this is established in the following lemma.

\begin{lemma}[Distinct proper witness trees]
\label{lemma: distinct proper witness trees}
{\em In a fixed execution, for every pair of distinct resampling steps $s < t$, we have $W_s \neq W_t$.
In particular, each proper witness tree $\tau$ appears at most once in the execution.}
\end{lemma}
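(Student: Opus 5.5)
The plan is the standard counting argument from Moser--Tardos: show that the root label together with the number of occurrences of that label determines the step. Fix $s<t$. If $A^{(s)}\neq A^{(t)}$, then the roots of $W_s$ and $W_t$ carry different labels and we are done. So the real case is $A^{(s)}=A^{(t)}=:A$. In that case I will show that $W_t$ contains strictly more vertices labeled $A$ than $W_s$ does.

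\emph{Step 1: vertices labeled $A$ in $W_t$.} Let $i_1<\dots<i_q=t$ be all indices $r\le t$ with $A^{(r)}=A$. Every such index $r<t$ is added to $W_t$ during the backward construction. The reason is that when index $r$ is processed, the root (labeled $A$) is already in the tree, and $A$ shares a variable with itself since $\mathrm{vbl}(A)\neq\varnothing$. Conversely, each vertex of $W_t$ labeled $A$ comes from exactly one such index. Hence $W_t$ has exactly $q$ vertices labeled $A$. The same reasoning shows that $W_s$ has exactly $q'$ vertices labeled $A$, where $q'=|\{r\le s: A^{(r)}=A\}|$. Since $s$ is itself among the $i_j$ and $s<t$, we get $q'<q$.

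\emph{Step 2: conclude.} The multiset of labels is an invariant of a labeled rooted tree. Therefore $W_s\neq W_t$.

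The ``in particular'' statement follows immediately: a tree $\tau$ equal to $W_t$ for some $t$ is equal to $W_t$ for at most one $t$.

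The main subtlety is what ``equal'' means. It must mean isomorphism as rooted labeled trees, and label counts are preserved under any such isomorphism, so the argument is unaffected. A second point needs care: if $\mathrm{vbl}(A)=\varnothing$, then $A$ is either the sure event or the empty event. In that degenerate case, I would assume (as is implicit in the setting) that every event depends on at least one variable. Otherwise the algorithm would loop forever or never select $A$, and this is excluded by the hypothesis \eqref{eq: LLL - variable setting}, since it forces $\mathbb{P}(A)<1$.
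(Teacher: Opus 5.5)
Your proof is correct, and it is essentially the original Moser--Tardos argument, but it is organized differently from the paper's. The paper splits according to whether step $s$ is inserted into $W_t$ during the backward construction. If step $s$ is skipped, $A^{(s)}$ does not overlap the root label, so $A^{(s)}\neq A^{(t)}$ and the roots differ. If it is inserted, the paper notes that $W_t$ has a non-root vertex labeled $A^{(s)}$ while the root of $W_s$ is labeled $A^{(s)}$, and concludes $W_s\neq W_t$.

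You instead split on whether the root labels agree. When $A^{(s)}=A^{(t)}=A$, you use an isomorphism invariant: the number of vertices labeled $A$, which equals the number of resamplings of $A$ up to the given step. This is exactly what is needed in the one case where the paper's inserted branch is not conclusive as written. When $A^{(s)}=A^{(t)}$, both roots carry the same label, and $W_s$ may itself contain non-root vertices labeled $A$ (from earlier resamplings of $A$). So the presence of a non-root $A$-vertex in $W_t$ does not by itself separate the trees.

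The paper's split can also be completed by a size comparison. If step $s$ is inserted into $W_t$, a backward induction shows that every step included in $W_s$ is also included in $W_t$. Step $t$ is included only in $W_t$, so $|V(W_t)|>|V(W_s)|$. Your route is the more economical one: a single invariant handles the equal-root case, and the unequal-root case is immediate.

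Your remark about $\mathrm{vbl}(A)\neq\varnothing$ is well taken. The paper uses this silently, since its skipped case needs $A^{(t)}$ to overlap itself. The lemma genuinely fails without it: an event with no variables that holds surely could be selected at every step, and then every $W_t$ is the same one-vertex tree. Condition \eqref{eq: LLL - variable setting} excludes this, because it forces $\mathbb{P}(A)\leq x(A)<1$. The other degenerate case, an impossible event with no variables, is not excluded by that condition. It is harmless, however, because such an event is never resampled. Your sentence lumps the two cases together slightly loosely.
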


\begin{proof}
Fix two distinct resampling steps $s<t$. The proper witness tree $W_t$ is constructed by processing the
sequence $A^{(1)}, \ldots, A^{(t)}$ in reverse order, and its root is labeled $A^{(t)}$. When the
construction reaches step $s$, either $A^{(s)}$ is inserted into $W_t$, or it is skipped.
\begin{enumerate}
\item If $A^{(s)}$ is inserted into $W_t$, then the construction creates a vertex labeled $A^{(s)}$ in $W_t$.
Since $s<t$, this vertex is not the root of $W_t$. On the other hand, the root of $W_s$ is labeled
$A^{(s)}$. Therefore, $W_s$ and $W_t$ cannot coincide as rooted labeled trees, and hence $W_s \neq W_t$.

\item If $A^{(s)}$ is not inserted into $W_t$, then $A^{(s)}$ does not overlap any label already
present in the partial proper witness tree. In particular, it does not overlap the root label $A^{(t)}$.
Hence $A^{(s)} \neq A^{(t)}$, so $W_s$ and $W_t$ have different root labels, and therefore are distinct.
\end{enumerate}
Thus, in both cases $W_s\neq W_t$.
\end{proof}

By Lemma~\ref{lemma: distinct proper witness trees}, distinct resamplings of $A$
produce distinct proper witness trees in $\mathcal{T}_A$. Hence, the number of
resamplings of $A$ equals the number of proper witness trees in $\mathcal{T}_A$
that occur during the execution.

Let $N_A$ denote the number of resamplings of $A$ during this execution, i.e., the number of times $A$ is selected
for resampling. Then
\[
N_A = \sum_{\tau \in \mathcal{T}_A} \mathbf{1}_{\{\exists\, t \text{ such that } W_t = \tau\}},
\]
and therefore, by linearity of expectation,
\begin{align}
\label{eq: expected E[N_A]}
\mathbb{E}[N_A] = \sum_{\tau \in \mathcal{T}_A} \mathbb{P}\left(\exists\, t \text{ such that } W_t = \tau \right).
\end{align}

\subsection{Witness Tree Lemma}
\label{subsection: Witness Tree Lemma}

A proper witness tree $\tau$ is said to appear if there exists a resampling step $t$ such that
$W_t = \tau$. We now derive an upper bound on the probability that a fixed proper witness tree
appears during an execution of the Moser--Tardos algorithm. This bound is a key ingredient in
the analysis, as it controls the expected number of resamplings via the summation over all proper
witness trees on the right-hand side of~\eqref{eq: expected E[N_A]}.
\begin{lemma}[Witness Tree Lemma]
\label{lemma: witness tree lemma}
{\em For every proper witness tree $\tau$,
\begin{align}
\label{eq: witness tree lemma}
\mathbb{P}(\tau \text{ appears}) \leq \prod_{v \in V(\tau)} \mathbb{P}([v]),
\end{align}
where $[v]$ denotes the label of vertex $v$.}
\end{lemma}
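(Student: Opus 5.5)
We plan to prove the Witness Tree Lemma by the standard \emph{resampling table} coupling of Moser and Tardos. Before the algorithm starts, we fix for each variable $X_i$ an infinite sequence $X_i^{(0)}, X_i^{(1)}, X_i^{(2)}, \ldots$ of mutually independent copies, each distributed as $X_i$, and all sequences independent across $i$. The algorithm is then run deterministically given this table. The initial value of $X_i$ is $X_i^{(0)}$, and the $j$-th resampling of $X_i$ uses $X_i^{(j)}$. This produces exactly the same distribution of executions as the original randomized algorithm. Consequently, it suffices to bound $\mathbb{P}(\tau \text{ appears})$ under this coupling, where all randomness lies in the table.

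Given the tree $\tau$, we define a deterministic check called \emph{$\tau$-check}. We order the vertices of $\tau$ by decreasing depth, breaking ties arbitrarily, and process them in the reverse of this order, deepest first. For a vertex $v$ with label $[v]$, we define for each $X_i \in \mathrm{vbl}([v])$ the index
\[
j_i(v) := \bigl|\{u \in V(\tau):\ u \text{ deeper than } v,\ X_i \in \mathrm{vbl}([u])\}\bigr|,
\]
which counts those vertices $u$ already processed. We then evaluate $[v]$ on the table values $\{X_i^{(j_i(v))}\}$. The $\tau$-check \emph{passes} if every $[v]$ holds on its assigned values. Distinct vertices of $\tau$ sharing a variable $X_i$ receive distinct indices $j_i(\cdot)$, so each table entry is used at most once. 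Since the entries are independent, the events ``$[v]$ holds on its assigned values'' are mutually independent, and $\mathbb{P}(\tau\text{-check passes}) = \prod_{v \in V(\tau)} \mathbb{P}([v])$.

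It then remains to show that if $\tau = W_t$ for some step $t$, then the $\tau$-check passes; this is the main obstacle. Fix a vertex $v$ of $W_t$, corresponding to resampling step $s \le t$ with $A^{(s)} = [v]$. Just before step $s$, the event $[v]$ held on the current values of the variables, and the current value of $X_i$ is $X_i^{(m)}$, where $m$ is the number of earlier steps $r<s$ whose event contains $X_i$. We therefore need $m = j_i(v)$. Every earlier step $r<s$ with $X_i \in \mathrm{vbl}(A^{(r)})$ overlaps $[v]$, so the backward construction inserts it into $W_t$. It is attached as a child of the deepest overlapping vertex, hence strictly deeper than $v$. Conversely, every vertex deeper than $v$ whose label contains $X_i$ corresponds to an earlier step. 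To see this, note that two vertices whose labels share $X_i$ are never at the same depth, because the later-inserted one would have been attached at least one level deeper. Hence, among vertices whose labels contain $X_i$, depth is strictly monotone in the time index, and in particular the set counted by $j_i(v)$ coincides with the set of steps $r<s$ using $X_i$. I expect the care to lie in making this monotonicity argument airtight, including the tie-breaking rule.

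Combining the two parts, we get $\{\tau \text{ appears}\} \subseteq \{\tau\text{-check passes}\}$. Therefore $\mathbb{P}(\tau \text{ appears}) \le \prod_{v \in V(\tau)} \mathbb{P}([v])$, which is \eqref{eq: witness tree lemma}.
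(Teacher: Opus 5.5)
Your proof is correct and has the same overall structure as the paper's: a resampling-table coupling; a $\tau$-check that reads each table entry at most once, so it passes with probability exactly $\prod_{v}\mathbb{P}([v])$; and the inclusion $\{\tau\text{ appears}\}\subseteq\{\tau\text{-check passes}\}$.

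The one substantive difference is the direction of the check, and your direction is the one that works. The paper processes each vertex before its children, with table pointers starting at $X_i^{(0)}$. It justifies the inclusion by saying that the advances made before $v$ correspond to the later resamplings represented by the ancestors of $v$. But when $[v]=A^{(s)}$ is selected at step $s$, it holds on $X_i^{(m)}$, where $m$ counts the \emph{earlier} steps that touch $X_i$. Those earlier steps are exactly the deeper vertices containing $X_i$, as your depth-monotonicity argument shows. For example, suppose $B$ is resampled at step~1 and $A$ at step~2, with $A$ and $B$ sharing a variable $X$. Then $W_2$ has root $A$ and child $B$. In the execution $A$ held on $X^{(1)}$, but a root-first check tests it on $X^{(0)}$, so the inclusion can fail. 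Your deepest-first indexing, with $j_i(v)$ equal to the number of deeper vertices containing $X_i$, is precisely what is needed; it is also the order used in Moser and Tardos' original $\tau$-check. Your monotonicity argument is sound and independent of tie-breaking: a later-inserted vertex that overlaps an existing one is attached strictly below it.

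One small repair is needed. You claim that distinct vertices sharing $X_i$ always receive distinct indices. This is false for an arbitrary proper witness tree, because the paper's definition allows two vertices at the same depth (e.g., siblings with distinct labels) to share a variable. In that case both read the same entry, and the check no longer passes with probability $\prod_v \mathbb{P}([v])$. Either of two fixes works:
\begin{enumerate}
\item By your own observation, such a $\tau$ never equals any $W_t$, so $\mathbb{P}(\tau\text{ appears})=0$ and the bound holds trivially.
\item Define the index as a running count of already-processed vertices containing $X_i$, in a fixed deepest-first order. This always yields distinct entries, and it agrees with $j_i(v)$ whenever no two vertices of equal depth share a variable, which holds for every $W_t$.
\end{enumerate}
Also, your sentence that orders vertices by decreasing depth and then processes them in the reverse order is self-contradictory. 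This is harmless, since with statically defined indices the processing order does not affect the outcome.
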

\begin{proof}
We use the standard \emph{resampling-table} representation of the algorithm.
For each variable $X_i$, let $X_i^{(0)},X_i^{(1)},X_i^{(2)},\dots$ be an infinite
sequence of samples, each distributed as $X_i$, and assume that all these samples
are mutually independent over all $i$ and all indices.
The Moser--Tardos algorithm may be viewed as a deterministic procedure on these tables:
initially the value of $X_i$ is $X_i^{(0)}$, and whenever an event $A$ is resampled,
the algorithm advances by one step in the table of each variable in $\mathrm{vbl}(A)$.

\noindent
Fix a proper witness tree $\tau$, and let $A_v := [v]$ denote the label of vertex
$v \in V(\tau)$. We shall show that
\begin{align}
\label{eq0: 28.04.2026}
\mathbb{P}(\tau \text{ appears}) \leq \prod_{v \in V(\tau)} \mathbb{P}(A_v).
\end{align}
Consider the following checking procedure for $\tau$. Initially, for each variable $X_i$,
the current table entry is $X_i^{(0)}$. Process the vertices of $\tau$ in any order
$v_1, \dots, v_m$ such that each vertex is processed before its children.
Processing a vertex $v_r$ means that we inspect the current table entries of the
variables in $\mathrm{vbl}(A_{v_r})$ and check whether the event $A_{v_r}$ occurs under
these entries. If $A_{v_r}$ occurs, advance by one step in the table of each variable in
$\mathrm{vbl}(A_{v_r})$ and continue; otherwise, the checking procedure fails.

\noindent
Let $F_r$ be the event that the check at $v_r$ succeeds. At the moment $v_r$ is processed,
conditioned on $F_1 \cap \cdots \cap F_{r-1}$, the table entries inspected for the variables
in $\mathrm{vbl}(A_{v_r})$ are fresh, independent samples with the same joint distribution as
the variables on which $A_{v_r}$ depends. Hence,
\begin{align}
\label{eq1: 28.04.2026}
\mathbb{P}\bigl(F_r \mid F_1 \cap \cdots \cap F_{r-1}\bigr)
= \mathbb{P}(A_{v_r}),
\end{align}
where the conditioning on the left-hand side of \eqref{eq1: 28.04.2026} is omitted for $r=1$.
Therefore, by the chain rule,
\begin{align}
\mathbb{P}(\text{the checking procedure for } \tau \text{ succeeds})
&= \mathbb{P}\left(\bigcap_{r=1}^m F_r\right) \nonumber \\
&= \prod_{r=1}^m \mathbb{P}\bigl(F_r \mid F_1 \cap \cdots \cap F_{r-1}\bigr) \nonumber \\
\label{eq2: 28.04.2026}
&= \prod_{v \in V(\tau)} \mathbb{P}(A_v),
\end{align}
where the last equality holds by \eqref{eq1: 28.04.2026}.

\smallskip
\noindent
We next show that
\begin{align}
\label{eq3: 28.04.2026}
\{\tau \text{ appears}\} \subseteq \{\text{the checking procedure for } \tau \text{ succeeds}\}.
\end{align}
Suppose that $\tau$ appears during an execution of the Moser--Tardos algorithm, say
$\tau = W_t$. The backward construction of $W_t$ associates each vertex of $\tau$ with a
resampling step at which the corresponding event occurred. Moreover, when the sequence is
processed in reverse order, if two resampled events share a common variable, then the earlier
resampling is attached as a descendant of the later one.
We now run the checking procedure on $\tau$. Since the vertices are processed from the root
towards the leaves, the advances of the table indices before processing a vertex $v$ account
exactly for the later resamplings involving the same variables, represented by the ancestors of $v$.
Therefore, when $v$ is processed, the table entries inspected are precisely
those corresponding to the occurrence of $A_v$ at its associated resampling step in the
execution. Since the algorithm resamples an event only when it occurs, each such check
succeeds. Hence, if $\tau$ appears, then the checking procedure for $\tau$ succeeds,
which proves \eqref{eq3: 28.04.2026}.

\noindent
Finally, combining \eqref{eq2: 28.04.2026} and \eqref{eq3: 28.04.2026}, it follows that
\[
\mathbb{P}(\tau \text{ appears})
\leq \mathbb{P}(\text{the checking procedure for } \tau \text{ succeeds})
= \prod_{v \in V(\tau)} \mathbb{P}(A_v),
\]
which proves inequality~\eqref{eq0: 28.04.2026}.
\end{proof}

\subsection{Bounding the expected number of resamplings}
\label{subsection: Expected Number of Resamplings}

\noindent
Combining \eqref{eq: expected E[N_A]} and \eqref{eq: witness tree lemma}
implies that, for all $A \in \mathcal{A}$,
\begin{align}
\label{eq: UB E[N_A]}
\mathbb{E}[N_A] \leq \sum_{\tau \in \mathcal{T}_A} \prod_{v \in V(\tau)} \mathbb{P}([v]).
\end{align}
We next bound this sum using a branching process.

\subsection{Branching process}
\label{subsection: Branching Process}

A Galton–Watson branching process is a stochastic process in which each vertex independently generates 
a random number of children according to a common offspring distribution, thereby generating a random 
rooted tree.

Here and throughout, $B \sim C$ denotes that the events $B$ and $C$ depend on a
common variable, i.e., $\mathrm{vbl}(B) \cap \mathrm{vbl}(C) \neq \varnothing$.
For an event $B \in \mathcal{A}$, let
\[
\Gamma(B) := \{ C \in \mathcal{A} \setminus \{B\} : C \sim B \}
\]
be the set of events, excluding $B$ itself, that share at least one variable with $B$;
equivalently, $\Gamma(B)$ is the set of neighbors of $B$ in the dependency graph.
Furthermore, let $\Gamma^{+}(B) := \Gamma(B) \cup \{B\}$.

\smallskip
Fix $A \in \mathcal{A}$. We define a branching process that generates proper witness trees
rooted at $A$.
\begin{itemize}
\item Start with a root labeled $A$.
\item For each node labeled $B$ and each event $C \in \Gamma^{+}(B)$, independently include a child
labeled $C$ with probability $x(C)$.
\end{itemize}
The process continues until it becomes extinct, i.e., until no new vertices are generated in some generation
(depending on the probabilities, this may occur with probability strictly less than one).
This randomized process generates labeled rooted trees in which every edge connects overlapping events
and children have distinct labels. In particular, every tree produced by this process is
a proper witness tree (though not every such tree necessarily arises from an execution of the
algorithm). In this randomized process, each node labeled $B$ generates a random subset
$S \subseteq \Gamma^{+}(B)$ with probability
\[
\prod_{C \in S} x(C) \prod_{C \in \Gamma^{+}(B)\setminus S} (1 - x(C)).
\]

\noindent
We now compute the probability that this process generates a given proper witness tree.
\begin{lemma}
\label{lemma: branching-process}
{\em Let $\tau$ be a proper witness tree rooted at $A$. Then, the probability that the branching
process produces $\tau$ is
\begin{align}
\label{eq: branching-process}
p_\tau = \frac{1 - x(A)}{x(A)}
\prod_{v \in V(\tau)} \left( x([v]) \prod_{C \in \Gamma([v])} (1 - x(C)) \right).
\end{align}}
\end{lemma}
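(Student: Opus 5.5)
The plan is to compute $p_\tau$ directly from the definition of the branching process. For each vertex $v$ of $\tau$ we record the choice made at $v$, then match factors.

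Since the process generates children independently at each node, the probability of producing exactly $\tau$ is the product, over all vertices $v \in V(\tau)$, of the probability that the node labeled $[v]$ generates exactly the set $S_v \subseteq \Gamma^{+}([v])$ of labels of its children in $\tau$. Two facts make this identification legitimate and a bijection on labeled trees: the children of $v$ have distinct labels (property~2 of Definition~\ref{definition: proper witness tree}), and each child label lies in $\Gamma^{+}([v])$ (property~1). Leaves correspond to $S_v = \varnothing$. This gives
\[
p_\tau = \prod_{v \in V(\tau)} \Biggl( \prod_{C \in S_v} x(C) \prod_{C \in \Gamma^{+}([v]) \setminus S_v} (1-x(C)) \Biggr).
\]

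Next I regroup. Each non-root vertex $u$ appears exactly once as a child, namely in $S_{\mathrm{parent}(u)}$, and contributes $x([u])$. To make the product of $x$-factors uniform over all of $V(\tau)$, I multiply and divide by $x(A)$ for the root. For the $(1-x)$ factors, I rewrite
\[
\prod_{C \in \Gamma^{+}([v]) \setminus S_v}(1-x(C)) = \frac{\prod_{C \in \Gamma^{+}([v])}(1-x(C))}{\prod_{C \in S_v}(1-x(C))}.
\]
The denominators over all $v$ together give $\prod_{u \neq \mathrm{root}} (1-x([u]))$. Each numerator splits as $(1-x([v]))\prod_{C \in \Gamma([v])}(1-x(C))$, so the factors $(1-x([v]))$ over all $v$ cancel the denominator except at the root. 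That leftover root factor $1-x(A)$ combines with $1/x(A)$ to give exactly $\frac{1-x(A)}{x(A)}\prod_{v}\bigl(x([v])\prod_{C\in\Gamma([v])}(1-x(C))\bigr)$.

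I expect the main obstacle to be the bookkeeping in the cancellation rather than any conceptual step. In particular, I need to check that each non-root label is divided out exactly once, through its unique parent, and I need $x(C)<1$ so that the divisions are valid. The latter holds since $x(C)\in[0,1)$.
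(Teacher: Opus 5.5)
Your proposal is correct and follows essentially the same route as the paper: write $p_\tau$ as a product over vertices of the probability of selecting exactly the prescribed child set, rewrite the $(1-x)$ factors as the full product over $\Gamma^{+}([v])$ divided by the product over the children, use that each non-root vertex is a child exactly once, and restore the root factor to obtain $\frac{1-x(A)}{x(A)}$. The only addition concerns your step of multiplying and dividing by $x(A)$: it tacitly requires $x(A)>0$, which the statement itself presupposes and the paper leaves implicit, in addition to the condition $x(C)<1$ you already noted.
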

\begin{proof}
Fix a proper witness tree $\tau$ rooted at $A$. We compute the probability that the branching
process produces exactly $\tau$.

In the branching process, each vertex independently selects its set of children. Therefore, the
probability of generating the tree $\tau$ is equal to the product, over all vertices $v \in V(\tau)$,
of the probabilities that each vertex selects precisely its children as prescribed by $\tau$. For a
vertex $v \in V(\tau)$, let $\mathrm{children}(v)$ denote the set of labels of the children of $v$
in the tree $\tau$.

Let $v \in V(\tau)$ be a vertex with label $[v]$. The set of its children in $\tau$ is a subset
of $\Gamma^{+}([v])$, i.e., $\mathrm{children}(v) \subseteq \Gamma^{+}([v])$.
By the construction of the branching process, each vertex $C \in \Gamma^{+}([v])$ is included as a child
independently with probability $x(C)$. Hence, the probability that $v$ selects exactly its children
in $\tau$ is
\[
\prod_{C \in \, \mathrm{children}(v)} x(C)
\prod_{C \in \, \Gamma^{+}([v]) \setminus \, \mathrm{children}(v)} (1 - x(C)).
\]
Multiplying these probabilities over all vertices $v \in V(\tau)$, we obtain
\[
p_\tau = \prod_{v \in V(\tau)} \left(
\prod_{C \in \, \mathrm{children}(v)} x(C)
\prod_{C \in \, \Gamma^{+}([v]) \setminus \, \mathrm{children}(v)} (1 - x(C)) \right).
\]
For each vertex $v$, we rewrite the second factor as
\[
\prod_{C \in \, \Gamma^{+}([v]) \setminus \, \mathrm{children}(v)} (1 - x(C))
= \frac{\underset{C \in \, \Gamma^{+}([v])}{\prod} (1 - x(C))}
{\underset{C \in \, \mathrm{children}(v)}{\prod} (1 - x(C))}.
\]
Substituting this identity gives
\[
p_\tau = \prod_{v \in V(\tau)} \left(
\prod_{C \in \, \Gamma^{+}([v])} (1 - x(C)) \right)
\cdot \prod_{v \in V(\tau)} \; \prod_{C \in \, \mathrm{children}(v)} \frac{x(C)}{1 - x(C)}.
\]
Let $r$ denote the root of $\tau$. Then $[r]=A$.
Since each non-root vertex appears exactly once as a child of its parent,
\[
\prod_{v \in V(\tau)} \prod_{C \in \, \mathrm{children}(v)} \frac{x(C)}{1 - x(C)}
= \prod_{v \in V(\tau) \setminus \{r\}} \frac{x([v])}{1 - x([v])}.
\]
Rewriting this as a product over all vertices gives 
\[
\prod_{v \in V(\tau) \setminus \{r\}} \frac{x([v])}{1 - x([v])}
= \frac{1-x(A)}{x(A)} \prod_{v \in V(\tau)} \frac{x([v])}{1 - x([v])},
\]
and therefore
\begin{align*}
p_\tau = \frac{1-x(A)}{x(A)}
\prod_{v \in V(\tau)} \left( \frac{x([v])}{1-x([v])}
\prod_{C \in \Gamma^{+}([v])} (1-x(C)) \right).
\end{align*}
Finally, since $\Gamma^{+}([v]) = \Gamma([v]) \cup \{[v]\}$ and the union is disjoint, equality \eqref{eq: branching-process} follows.
\end{proof}

\smallskip
\noindent
Since the Galton-Watson branching process produces at most one tree (not necessarily from $\mathcal{T}_{A}$
since it can produce an infinite tree), we have
\[
\sum_{\tau \in \mathcal{T}_{A}} p_\tau \leq 1.
\]
Therefore, by Lemma~\ref{lemma: branching-process} and since $x([v]) \in [0,1)$,
\begin{align}
\label{eq: branching-process-bound}
\sum_{\tau \in \mathcal{T}_{A}}
\prod_{v \in V(\tau)} \left( x([v]) \prod_{C \in \Gamma([v])} (1 - x(C)) \right)
\leq \frac{x(A)}{1 - x(A)}.
\end{align}

\subsection{Completion of the proof of Theorem~\ref{theorem: Moser--Tardos}}
\label{subsection: completion of MT theorem}

\noindent
By condition~\eqref{eq: LLL - variable setting}, for every vertex $v \in V(\tau)$,
\[
\mathbb{P}([v]) \leq x([v]) \prod_{C \in \Gamma([v])} (1 - x(C)).
\]
Hence, for every $\tau \in \mathcal{T}_{A}$ with $A \in \mathcal{A}$,
\begin{align}
\label{eq: UB on product of prob.}
\prod_{v \in V(\tau)} \mathbb{P}([v])
\leq \prod_{v \in V(\tau)} \left( x([v]) \prod_{C \sim [v]} (1 - x(C)) \right).
\end{align}
Combining \eqref{eq: UB E[N_A]}, \eqref{eq: branching-process-bound}, and \eqref{eq: UB on product of prob.}, we obtain
\[
\mathbb{E}[N_{A}]
\leq \sum_{\tau \in \mathcal{T}_{A}} \prod_{v \in V(\tau)} \mathbb{P}([v])
\leq \frac{x(A)}{1 - x(A)},
\]
and summing over all events $A \in \mathcal{A}$ implies that
\[
\mathbb{E}[\text{total resamplings}]
\leq \sum_{A \in \mathcal{A}} \frac{x(A)}{1 - x(A)} < \infty.
\]
Consequently, since the expected total number of resamplings is finite, the algorithm terminates
almost surely after finitely many resampling steps and produces an assignment in which none of
the events in $\mathcal{A}$ occurs. In particular, $\mathbb{P}\left( \bigcap_{A \in \mathcal{A}} \overline{A} \right) > 0.$

\subsection{Shearer's Bound and Beyond for the Moser--Tardos Algorithm}
\label{subsection: Shearer's Bound and Beyond for the Moser--Tardos Algorithm}

We close this section by noting that, in the variable setting considered here, recent works
\cite{KolipakaS11,HeLS23,HeLLWX26} have shown that the efficient region of the Moser--Tardos
algorithm reaches, and in some cases exceeds, Shearer's bound (Theorem~\ref{theorem: Shearer}).
This does not contradict the optimality of Shearer's criterion in the abstract LLL
(Theorem~\ref{theorem: LLL}), where only the dependency graph and the event probabilities are
taken into account. In the variable model, however, one retains additional structural information
beyond the dependency graph. Specifically, the variable model is described by a bipartite
variable--event graph whose vertices are the variables and bad events, with an edge joining a
variable to a bad event whenever the latter depends on the former. The corresponding dependency
graph is then obtained by projecting this bipartite graph onto the event vertices, linking two
events whenever they depend on a common variable.

The connection between Shearer's criterion and the Moser--Tardos algorithm was established by
Kolipaka and Szegedy \cite{KolipakaS11}, who showed that, when only the dependency graph and
event probabilities are taken into account, the efficient region of the algorithm coincides with
the Shearer region. 

Building on the additional structure available in the variable setting, \cite{HeLS23} develops 
a refined analysis of the Moser--Tardos algorithm and shows that the efficient region extends 
beyond the Shearer region if the dependency graph is non-chordal, that is, if it contains an 
induced cycle of length at least four. By contrast, for chordal dependency graphs (i.e., graphs 
in which every cycle of length at least four has a chord), Shearer's bound still exactly characterizes 
the efficient region.

Subsequently, \cite{HeLLWX26} clarified the role of non-chordality by studying the relationship
between the abstract-LLL and variable-LLL boundaries. In particular, it showed that the two
boundaries coincide for trees, whereas the presence of an induced cycle of length at least four
gives rise to a genuine gap between them. Thus, while Shearer's criterion remains the optimal
threshold in the abstract dependency-graph setting, the variable model retains structural
information that is discarded when passing to the dependency graph and that can be exploited
algorithmically to obtain convergence guarantees beyond the Shearer threshold.

\section{The entropy-compression principle}
\label{section: entropy-compression principle}

The entropy-compression principle, commonly attributed to Moser \cite{Moser09} and whose name was 
coined by Tao \cite{Tao2009}, is closely related to the Moser--Tardos resampling framework for the LLL. 
The basic idea is to analyze a randomized correction algorithm through suitably defined execution logs 
that record the bad events encountered and the corresponding resampling steps. Suppose that every 
execution surviving at least $t$ steps can be associated with a pair consisting of an execution log 
and the state of the algorithm after step $t$, and that this correspondence is injective. If, moreover, 
the total number of such log--state pairs grows at a strictly smaller exponential rate than the number 
of possible random input sequences, then the proportion of inputs generating long executions decays 
exponentially with $t$. Consequently, the algorithm terminates almost surely.

This viewpoint has proved particularly fruitful in probabilistic combinatorics \cite{EsperetP13,DujmovicJKW16}, 
where it often yields constructive existence proofs and quantitative bounds through direct counting arguments 
on the possible execution histories of randomized algorithms.

The following theorem abstracts the counting argument underlying the entropy-compression method introduced by 
Moser \cite{Moser09}; see also Tao \cite{Tao2009}.
\begin{theorem}[Entropy-compression principle]
\label{theorem: entropy-compression principle}
{\em Let  $\Omega$ be a finite set of size $q$, and consider a randomized algorithm whose 
random choices are independent and uniformly distributed over $\Omega$. Assume that once 
the sequence of random choices is fixed, the execution of the algorithm is uniquely determined. 
For each integer $t \geq 1$, let $B_t$ be defined as 
\begin{align}
\label{eq: B-23.05.26e}
B_t:= \Bigl\{ R \in \Omega^t: \text{the execution determined by }R \text{ reaches step }t \Bigr\},
\end{align}
which is a deterministic subset of $\Omega^t$. For every $R \in B_t$, let $L_t(R) \in \mathcal{L}_t$
be a record (or log) of the information collected during the first $t$ steps of the execution, 
and let $S_t(R) \in \mathcal{S}_t$ be the state after step $t$. Assume that for every $t \geq 1$, 
the mapping 
\begin{align}
\label{eq: Phi-23.05.26d}
\Phi_t \colon B_t \to \mathcal{L}_t \times \mathcal{S}_t, \qquad \Phi_t(R)=(L_t(R),S_t(R)),
\end{align}
is injective, and that 
\begin{align}
\label{eq: 23.05.26f}
|\mathcal L_t \times \mathcal S_t| \leq C \alpha^t,
\end{align}
for some constants $C>0$ and $\alpha < q$, independent of $t$.
Then,
\begin{align}
\label{eq: 23.05.26g}
\mathbb{P}(\text{the algorithm reaches step } t) \leq C \left( \frac{\alpha}{q} \right)^t.
\end{align}
Consequently, the algorithm terminates almost surely.}
\end{theorem}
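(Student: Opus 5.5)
The bound follows by counting sequences of random choices. Since the choices are independent and uniform over $\Omega$, the first $t$ of them form a uniformly distributed element $R$ of $\Omega^t$, with each sequence having probability $q^{-t}$. Whether the execution reaches step $t$ depends only on these first $t$ choices, because the execution is determined by the random sequence and step $t$ uses only the first $t$ choices. So the event ``the algorithm reaches step $t$'' is exactly $\{R \in B_t\}$, and
\[
\mathbb{P}(\text{the algorithm reaches step } t) = \frac{|B_t|}{q^t}.
\]

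Next, injectivity of $\Phi_t$ gives $|B_t| \leq |\mathcal{L}_t \times \mathcal{S}_t| \leq C\alpha^t$ by \eqref{eq: 23.05.26f}. Dividing by $q^t$ yields \eqref{eq: 23.05.26g}.

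For almost sure termination, the events $E_t := \{\text{the algorithm reaches step } t\}$ are nested and decreasing in $t$, since reaching step $t+1$ requires reaching step $t$. The event that the algorithm never terminates is $\bigcap_{t \geq 1} E_t$. By continuity of probability from above,
\[
\mathbb{P}\Bigl(\bigcap_{t\geq 1} E_t\Bigr) = \lim_{t\to\infty} \mathbb{P}(E_t) \leq \lim_{t\to\infty} C\Bigl(\frac{\alpha}{q}\Bigr)^t = 0,
\]
because $0 \le \alpha/q < 1$. (If $\alpha \le 0$, then $B_t$ is empty and the claim is trivial.) Alternatively, $\sum_t \mathbb{P}(E_t) < \infty$, so the expected number of steps is finite.

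The main obstacle is the identification $\mathbb{P}(E_t) = |B_t|/q^t$. It requires modeling the random choices as an infinite i.i.d. uniform sequence and checking that $E_t$ is measurable with respect to the first $t$ coordinates, so that it is exactly the cylinder set over $B_t$. This holds under the convention, implicit in \eqref{eq: B-23.05.26e}, that the algorithm consumes one random symbol per step. If a step consumed several symbols, the same argument would go through after replacing $q$ by the corresponding power.
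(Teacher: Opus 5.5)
Your proof is correct and follows essentially the same route as the paper. Injectivity of $\Phi_t$ gives $|B_t|\le C\alpha^t$, uniformity of the choices gives $\mathbb{P}(\text{reach step } t)=|B_t|/q^t$, and letting $t\to\infty$ yields almost sure termination; your remarks on measurability with respect to the first $t$ choices and on the degenerate case $\alpha\le 0$ only spell out conventions the paper leaves implicit.
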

\begin{proof}
Since, by assumption, the mapping $\Phi_t \colon B_t \to \mathcal{L}_t \times \mathcal{S}_t$ is injective,
it follows that 
\begin{align}
|B_t| \leq |\mathcal{L}_t \times \mathcal{S}_t| \leq C \alpha^t.
\end{align}
Therefore, for every integer $t \geq 1$, 
\begin{align}
\mathbb{P}(\text{the algorithm lasts at least }t\text{ steps})
& =\frac{|B_t|}{q^t} \nonumber \\
\label{eq1: 15.06.26}
& \leq C \, \biggl(\frac{\alpha}{q} \biggr)^t.
\end{align}
Since $\alpha<q$, the right-hand side of \eqref{eq1: 15.06.26} tends to~0 as $t \to \infty$. 
Let $T$ denote the execution time of the algorithm. Then, 
\begin{align}
\lim_{t \to \infty} \mathbb{P}(T > t) = 0.
\end{align}
Hence $\mathbb{P}(T < \infty) = 1$, and therefore the algorithm terminates almost surely.
\end{proof}

\begin{remark}[Connection to the Shannon entropy]
\label{remark: connection to the Shannon entropy}
{\em The terminology \emph{entropy compression} stems from the following information-theoretic 
interpretation. Since the entries of the random vector $R=(r_1,\ldots,r_t)$ are independent 
and uniformly distributed over an alphabet $\Omega$ of size $q$, the Shannon entropy
of $R$ equals
\begin{align}
\label{eq1: entropy}
\Entr(R) = t \log q.
\end{align}
Furthermore, since the map $R \mapsto (L_t,S_t)$ is injective, the pair $(L_t,S_t)$
uniquely determines the random input vector $R$. This implies that the conditional 
entropy of $R$ given $(L_t, S_t)$ satisfies 
\begin{align}
\Entr(R \hspace{-0.05cm} \mid \hspace{-0.05cm} L_t, S_t) = 0.
\end{align}
Consequently, by the chain rule for the entropy (see \cite[Theorem~2.5.1]{CoverThomas2006}), it follows that
\begin{align}
\Entr(R) & \leq \Entr(R, L_t, S_t) \nonumber \\
& = \Entr(R \hspace{-0.05cm} \mid \hspace{-0.05cm} L_t, S_t) + \Entr(L_t, S_t) \nonumber \\
\label{eq2: entropy}
&= \Entr(L_t,S_t).
\end{align}
Since the Shannon entropy of a random vector is upper bounded by the logarithm
of the number of its possible outcomes (see \cite[Theorem~2.6.4]{CoverThomas2006}),
it follows from \eqref{eq: 23.05.26f} that
\begin{align}
\Entr(L_t,S_t) & \leq \log(C \alpha^t) \nonumber \\
\label{eq3: entropy}
&= \log C + t \log \alpha.
\end{align}
Combining \eqref{eq1: entropy}--\eqref{eq3: entropy} therefore gives
\begin{align}
\label{eq: 23.05.26m}
t \log q = \Entr(R) \leq \Entr(L_t,S_t) \leq \log C+t\log\alpha.
\end{align}
Since by assumption $\alpha$ and $C$ are fixed positive constants with $\alpha < q$, inequality \eqref{eq: 23.05.26m} 
is violated for sufficiently large~$t$.
Thus, excessively long executions would imply that the random input vector $R$ can be represented by the
pair $(L_t,S_t)$, whose number of possible outcomes grows asymptotically like $\alpha^t$ with $\alpha<q$.
In this sense, the information contained in the random source is effectively compressed, contradicting the entropy
lower bound $\Entr(R)= t \log q$.}
\end{remark}

\begin{remark}[Connection to the LLL]
\label{remark: connection of entropy-compression principle to LLL}
{\em The entropy-compression principle is closely related to the Moser--Tardos algorithmic proof of the LLL. 
The entropy-compression framework applies naturally to the Moser--Tardos resampling algorithm. Indeed, 
the successive resampling decisions may be viewed as a sequence of independent random choices drawn from 
a finite alphabet, and the resulting execution history is a deterministic function of this random sequence. 
Thus, the Moser--Tardos algorithm is an instance of the abstract randomized process considered in the 
entropy-compression principle. In both settings, one considers a randomized correction procedure in which 
variables are sampled independently and local resampling steps are performed whenever a bad event occurs.

\noindent
The two approaches differ primarily in how the execution process is analyzed. The Moser--Tardos framework 
studies the resampling algorithm through witness trees and branching-process estimates, whereas entropy-compression 
arguments analyze execution histories via combinatorial logs and injective reconstruction maps.
More precisely, the entropy-compression approach shows that excessively long executions would yield an injective 
encoding of the random input into a family of descriptions whose cardinality is too small to accommodate the entropy 
of the underlying random source. Consequently, the number of possible long execution histories is exponentially 
smaller than the number of random inputs that would have to be encoded by them.}
\end{remark}

\section{The lopsided Lov\'{a}sz Local Lemma}
\label{section: Lopsided LLL}

The LLL has an important refinement in which the standard notion of a dependency 
graph is replaced by the weaker notion of a \emph{lopsidependency graph}. This 
refinement is especially useful in problems involving random permutations, matchings, 
and transversals, where the relevant bad events are generally not independent but 
satisfy a suitable negative-dependence condition. 
In Section~\ref{subsection: presentation of the LLLL} we present the lopsided LLL, 
and in Section~\ref{subsection: Latin transversals via the lopsided LLL} we apply 
it to the problem of Latin transversals.

\subsection{Formulation of the lopsided Lov\'{a}sz Local Lemma}
\label{subsection: presentation of the LLLL}

\noindent
In this subsection, we present the lopsided LLL and discuss several
related remarks. 

\begin{definition}[Lopsidependency graph]
\label{definition: lopsidependency graph}
{\em Let $\{A_i\}_{i \in I}$ be a finite family of events in a probability space.
A graph $G$ on the vertex set $I$ is called a \emph{lopsidependency graph}
for $\{A_i\}_{i \in I}$ if, for every $i \in I$ and every set
\begin{align}
\label{eq1: LLLL}
S \subseteq I \setminus \bigl(\Gamma(i)\cup\{i\}\bigr),
\end{align}
where $\Gamma(i)$ denotes the set of neighbors of $i$ in $G$, we have
\begin{align}
\label{eq2: LLLL}
\mathbb{P}\left(A_i \; \middle| \; \bigcap_{j\in S}\overline{A_j} \right) \leq \mathbb{P}(A_i).
\end{align}}
\end{definition}

\begin{theorem}[Lopsided Lov\'{a}sz Local Lemma]
\label{theorem: Lopsided LLL}
{\em Let $\{A_i\}_{i \in I}$ be a finite family of events in a probability space,
and let $G$ be a lopsidependency graph for $\{A_i\}_{i\in I}$.
If there exist numbers $\{x_i\}_{i\in I} \subseteq [0,1)$ such that
\begin{align}
\label{eq3: LLLL}
\mathbb{P}(A_i) \leq x_i \prod_{j \in \Gamma(i)}(1-x_j), \qquad \forall \, i \in I,
\end{align}
then
\begin{align}
\label{eq4: LLLL}
\mathbb{P}\left(\bigcap_{i \in I} \overline{A_i} \right) \geq \prod_{i \in I} (1-x_i) > 0.
\end{align}}
\end{theorem}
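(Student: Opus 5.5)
We follow the reformulated proof of Theorem~\ref{theorem: LLL} almost verbatim. We replace the independence step by the lopsidependency inequality \eqref{eq2: LLLL}, and we write everything in unconditional form. For $S \subseteq I$ put $F(S) := \bigcap_{j\in S}\overline{A_j}$. The core claim, proved by induction on $|S|$, is the analogue of Lemma~\ref{lemma: LLL-unconditional}:
\[
\mathbb{P}\bigl(A_i\cap F(S)\bigr)\leq x_i\,\mathbb{P}\bigl(F(S)\bigr),\qquad S\subset I,\; i\in I\setminus S .
\]
Granting this, Step~2 of the proof of Theorem~\ref{theorem: LLL} goes through unchanged. We enumerate $I$ and set $F_k$ as there, so that $\mathbb{P}(F_k)\geq(1-x_k)\mathbb{P}(F_{k-1})$. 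Iterating gives \eqref{eq4: LLLL}, and positivity follows from $x_i<1$.

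For the induction, the base case $S=\varnothing$ is immediate from \eqref{eq3: LLLL}. For the step, split $S=S_1\,\dot\cup\,S_2$ with $S_1:=S\cap\Gamma(i)$, so that $S_2\subseteq I\setminus(\Gamma(i)\cup\{i\})$. The one place where the original proof used independence is the bound $\mathbb{P}(A_i\cap F(S_2))\leq\mathbb{P}(A_i)\,\mathbb{P}(F(S_2))$, and this is now supplied by \eqref{eq2: LLLL}. I must handle its conditional form carefully. If $\mathbb{P}(F(S_2))>0$, multiplying \eqref{eq2: LLLL} through by $\mathbb{P}(F(S_2))$ gives the product inequality. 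If $\mathbb{P}(F(S_2))=0$, both sides vanish. Thus I will read Definition~\ref{definition: lopsidependency graph} as the unconditional inequality $\mathbb{P}(A_i\cap F(S))\leq\mathbb{P}(A_i)\mathbb{P}(F(S))$, which is equivalent to it whenever the conditioning is defined. This is in keeping with Remark~\ref{remark: avoiding conditional probabilities}.

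With this in hand, the case $S_1=\varnothing$ is immediate, since
\[
\mathbb{P}(A_i\cap F(S))\leq\mathbb{P}(A_i)\mathbb{P}(F(S))\leq x_i\mathbb{P}(F(S)).
\]
For $S_1\neq\varnothing$ we proceed in two parts. First, as in \eqref{eq: num-ub},
\[
\mathbb{P}(A_i\cap F(S))\leq\mathbb{P}(A_i\cap F(S_2))\leq x_i\prod_{j\in\Gamma(i)}(1-x_j)\,\mathbb{P}(F(S_2)).
\]
Second, the telescoping chain $G_t$ over $S_1=\{j_1,\dots,j_r\}$, together with the induction hypothesis applied to the pairs $(j_t,T_t)$ with $|T_t|<|S|$, yields
\[
\mathbb{P}(F(S))\geq\prod_{j\in S_1}(1-x_j)\,\mathbb{P}(F(S_2)).
\]
This part uses no independence at all. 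Combining the two displays with $\prod_{j\in S_1}(1-x_j)\geq\prod_{j\in\Gamma(i)}(1-x_j)$ closes the induction.

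The main obstacle is therefore only the interpretation of \eqref{eq2: LLLL} when $\mathbb{P}(F(S_2))=0$, together with checking that the undirected graph $G$ plays the role of the out-neighbourhoods in the digraph proof. Both are resolved as described: the zero-probability case is trivial in product form, and $\Gamma(i)$ simply replaces $\{j:(i,j)\in E\}$ throughout.
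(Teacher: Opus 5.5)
Your proposal is correct and follows the paper's proof: the paper also obtains the lopsided LLL by rerunning Lemma~\ref{lemma: LLL-unconditional} and Step~2 of the reformulated LLL proof. There, independence is replaced by \eqref{eq2: LLLL}, which is needed only for the bound $\mathbb{P}(A_i\cap F(S_2))\leq \mathbb{P}(A_i)\,\mathbb{P}(F(S_2))$ and for the case $S_1=\varnothing$. Your handling of the degenerate case $\mathbb{P}(F(S_2))=0$, by reading \eqref{eq2: LLLL} in product form, is a detail the paper leaves implicit, and it fits the approach of Remark~\ref{remark: avoiding conditional probabilities}.
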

\begin{proof}
This result follows from the proof of the LLL given in Section~\ref{section: LLL and modified proof}.
Specifically, Lemma~\ref{lemma: LLL-unconditional} remains valid under the weaker assumptions 
\eqref{eq2: LLLL} and \eqref{eq3: LLLL}, with the obvious notational modification that the index set $[n]$ 
is replaced by the finite set $I$. Consequently, Step~2 of that proof remains valid under the same assumptions.
\end{proof}

\noindent
A convenient symmetric form follows from Theorem~\ref{theorem: Lopsided LLL}; compared with Theorem~\ref{theorem: LLL - symmetric}, 
it requires a weaker hypothesis.
\begin{corollary}[Symmetric lopsided Lov\'asz local lemma]
\label{corollary: symmetric Lopsided LLL}
{\em Under the assumptions of Theorem~\ref{theorem: Lopsided LLL} with $\mathbb{P}(A_i) \leq p$ for all $i \in I$, 
suppose that every vertex of $G$ has degree at most $d \geq 1$, and $ep(d+1)\leq 1$. Then
\begin{align}
\label{eq3: symmetric LLLL}
\mathbb{P}\left(\bigcap_{i \in I} \overline{A_i} \right) & \geq \left( \frac{d}{1+d} \right)^{|I|}.
\end{align}}
\end{corollary}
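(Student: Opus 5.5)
The plan is to mirror the proof of Theorem~\ref{theorem: LLL - symmetric}, replacing the appeal to Theorem~\ref{theorem: LLL} by an appeal to the lopsided LLL in Theorem~\ref{theorem: Lopsided LLL}. Concretely, I set $x_i := \frac{1}{d+1}$ for all $i \in I$; these values lie in $[0,1)$ since $d \geq 1$. The graph $G$ is the given lopsidependency graph, so the only hypothesis of Theorem~\ref{theorem: Lopsided LLL} that needs checking is the inequality \eqref{eq3: LLLL} for every $i \in I$.

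To verify \eqref{eq3: LLLL}, I will reuse the chain \eqref{eq1: 05.03.26}--\eqref{eq4: 05.03.26} essentially verbatim. The assumption $ep(d+1)\leq 1$ gives $\mathbb{P}(A_i) \leq p \leq \frac{1}{(d+1)e}$. The monotone convergence of $\bigl(1+\frac1d\bigr)^d$ to $e$ from below then gives
\[
\frac{1}{(d+1)e} \leq \frac{1}{d+1}\Bigl(1-\frac{1}{d+1}\Bigr)^{d}.
\]
Finally, $|\Gamma(i)| \leq d$ and $1-x_j \in (0,1]$ give
\[
\Bigl(1-\frac{1}{d+1}\Bigr)^d \leq \prod_{j\in\Gamma(i)}(1-x_j).
\]
This is the degree bound in the undirected graph $G$, the analogue of the outdegree bound used for the digraph $D$ before. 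Combining the three inequalities yields $\mathbb{P}(A_i) \leq x_i \prod_{j \in \Gamma(i)}(1-x_j)$ for all $i \in I$.

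Theorem~\ref{theorem: Lopsided LLL} then gives
\[
\mathbb{P}\Bigl(\bigcap_{i\in I}\overline{A_i}\Bigr) \geq \prod_{i\in I}\Bigl(1-\frac{1}{d+1}\Bigr) = \Bigl(\frac{d}{d+1}\Bigr)^{|I|},
\]
which is \eqref{eq3: symmetric LLLL}. If desired, one can add the further bound $> e^{-|I|/d}$ exactly as in Theorem~\ref{theorem: LLL - symmetric}.

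There is no real obstacle here. The only point requiring care is to make sure that the product in \eqref{eq3: LLLL} runs over the neighborhood $\Gamma(i)$ in the lopsidependency graph, whose size is bounded by the degree assumption. No independence structure beyond what Theorem~\ref{theorem: Lopsided LLL} already uses is needed, and the weaker lopsided hypothesis \eqref{eq2: LLLL} has already been absorbed into that theorem.
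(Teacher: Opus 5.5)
Your proposal is correct and is essentially the paper's proof. Like the paper, you set $x_i = \frac{1}{d+1}$, verify \eqref{eq3: LLLL} through the same chain of inequalities as in Theorem~\ref{theorem: LLL - symmetric} (using $|\Gamma(i)| \leq d$ in place of the outdegree bound), and then invoke Theorem~\ref{theorem: Lopsided LLL}.
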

\begin{proof}
Apply Theorem~\ref{theorem: Lopsided LLL} with $x_i = \frac1{1+d}$ for all $i \in I$. The rest of the proof 
proceeds as in the proof of Theorem~\ref{theorem: LLL - symmetric}.
\end{proof}

\begin{remark}
\label{remark: Lopsided LLL refines LLL}
{\em The lopsided LLL is a refinement of the LLL (Theorem~\ref{theorem: LLL})
when the latter is formulated in terms of undirected dependency graphs. 
Indeed, in this formulation, each event $A_i$ is independent of the 
$\sigma$-algebra generated by the events $A_j$ with 
$j \notin \Gamma(i) \cup \{i\}$. This implies, in particular, that
\begin{align}
\label{eq5: LLLL}
\mathbb{P}\left(A_i \, \middle|\, \bigcap_{j\in S}\overline{A_j} \right) = \mathbb P(A_i),
\end{align}
for all $S\subseteq I\setminus(\Gamma(i)\cup\{i\})$, whenever the conditional
probability is well defined. Hence, every dependency graph is also a
lopsidependency graph.

The converse is not true in general. The lopsided condition requires only that 
conditioning on the non-occurrence of non-neighboring bad events does not increase 
the probability of a given bad event. Consequently, the lopsided LLL can be applicable 
even when the ordinary LLL is not. As illustrated in 
Section~\ref{subsection: Latin transversals via the lopsided LLL}, this is particularly 
useful in permutation and matching problems, where the dependence structure often prevents 
a direct application of the ordinary LLL, while still satisfying the negative-dependence 
condition \eqref{eq2: LLLL} required by the lopsided LLL.}
\end{remark}

\begin{remark}
{\em The conditions in Definition~\ref{definition: lopsidependency graph}
and Theorem~\ref{theorem: Lopsided LLL} may be formulated slightly more 
generally using auxiliary numbers $\{p_i\}_{i \in I} \subseteq [0,1)$, 
satisfying
\begin{align}
\label{eq1: 16.06.26}
p_i \geq \mathbb{P}(A_i).
\end{align}
Indeed, it suffices to assume that for every $i\in I$ and every subset 
$S$ satisfying \eqref{eq1: LLLL}, we have
\begin{align}
\label{eq2: 16.06.26}
\mathbb{P}\left(A_i \,\middle|\, \bigcap_{j\in S}\overline{A_j}\right)
\leq p_i,
\end{align}
and that there exist numbers $x_i\in[0,1)$ such that
\begin{align}
\label{eq3: 16.06.26}
p_i \leq x_i \prod_{j \in \Gamma(i)}(1-x_j),
\qquad \forall\, i \in I.
\end{align}
Note that taking $S = \varnothing$ in \eqref{eq2: 16.06.26} yields 
\eqref{eq1: 16.06.26} for every $i \in I$. The standard proof of 
the lopsided LLL then applies verbatim, yielding \eqref{eq4: LLLL}.}
\end{remark}

\subsection{Latin transversals via the lopsided Lov\'{a}sz Local Lemma}
\label{subsection: Latin transversals via the lopsided LLL}

Latin transversals are important combinatorial objects that arise naturally in
matching and assignment problems, combinatorial design theory, graph coloring,
and scheduling. Their study is closely connected to Latin squares,
permutation problems, and the probabilistic method. In particular, deriving
sufficient conditions for the existence of a Latin transversal in a given
matrix is a classical application of the lopsided LLL \cite{ErdosSpencer91}.

\begin{definition}[Latin transversal of a matrix]
\label{definition: Latin transversal}
{\em A \emph{Latin transversal} of an $n \times n$ matrix
$\mathbf A=(a_{i,j})$ is a selection of $n$ entries, one from each row
and each column, whose values are pairwise distinct. Equivalently,
it is a set of positions
\[
\{(i,\pi(i)): 1 \leq i \leq n\},
\]
where $\pi$ is a permutation of $[n]$, such that the sequence
$\{a_{i,\pi(i)}\}_{i=1}^n$ has pairwise distinct terms.}
\end{definition}

\begin{theorem}[Existence of Latin transversals]
\label{theorem: Latin transversals}
{\em Let $\mathbf{A}=(a_{i,j})$ be an $n \times n$ matrix in which every symbol
occurs at most $k$ times. If
\begin{align}
\label{eq: Latin transversals}
k\leq \frac{n-1}{4e}+1-\frac1{4n},
\end{align}
then $\mathbf{A}$ contains a Latin transversal.}
\end{theorem}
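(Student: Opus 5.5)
The plan is the classical Erdős–Spencer argument: take a uniformly random permutation and apply the symmetric lopsided LLL (Corollary~\ref{corollary: symmetric Lopsided LLL}). Let $\pi$ be uniform over the permutations of $[n]$.

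\textbf{Bad events.} Call two cells $(i,j)$ and $(i',j')$ a \emph{bad pair} if $i \neq i'$, $j \neq j'$ and $a_{i,j}=a_{i',j'}$. For each bad pair let $A$ be the event $\{\pi(i)=j,\ \pi(i')=j'\}$. Then
\[
\mathbb{P}(A)=\frac{1}{n(n-1)}=:p .
\]
The transversal $\{(i,\pi(i))\}_{i=1}^n$ is Latin exactly when no such event occurs.

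\textbf{Trivial case.} If $k=1$, all entries are distinct and any permutation works. So assume $k \geq 2$, which will make $d \geq 1$ below.

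\textbf{The graph and its degree.} Join two bad-pair events in $G$ when their four cells share a row or a column. Fix an event with cells $(i,j),(i',j')$. Rows $i,i'$ and columns $j,j'$ contain at most $4n$ cells, counted with multiplicity. A neighbouring event must contain one of these cells. Each cell lies in at most $k-1$ bad pairs, since its symbol occurs at most $k-1$ other times. Hence the degree is at most $d:=4n(k-1)$; the event itself is counted there, so this bound is not tight.

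\textbf{Reduction to the stated condition.} The requirement $ep(d+1)\leq 1$ reads
\[
e\bigl(4n(k-1)+1\bigr)\leq n(n-1).
\]
Dividing by $4en$ shows this is exactly \eqref{eq: Latin transversals}. So once $G$ is shown to be a lopsidependency graph, Corollary~\ref{corollary: symmetric Lopsided LLL} gives positive probability that no bad event occurs, and a Latin transversal exists.

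\textbf{Main obstacle: the lopsidependency condition \eqref{eq2: LLLL}.} Fix $A=\{\pi(i)=j,\pi(i')=j'\}$ and a family $S$ of events whose cells avoid rows $i,i'$ and columns $j,j'$. Let $U$ be the set of permutations avoiding all events in $S$; we may assume $U\neq\varnothing$. For distinct $a,b$, let $U_{ab}=\{\pi\in U:\pi(i)=a,\ \pi(i')=b\}$. Since $\mathbb{P}(A)=1/(n(n-1))$, it suffices to show
\[
|U_{jj'}| \leq |U_{ab}| \quad \text{for all } a\neq b .
\]
Averaging over $(a,b)$ then gives $\mathbb{P}(A\mid U)\leq 1/(n(n-1))$.

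\textbf{The injection.} For $\pi\in U_{jj'}$, let $s=\pi^{-1}(a)$ and $t=\pi^{-1}(b)$. Define $\pi^*$ by setting $\pi^*(i)=a$, $\pi^*(i')=b$, $\pi^*(s)=j$, $\pi^*(t)=j'$, and $\pi^*=\pi$ elsewhere. Coincidences such as $s=i'$ or $a=j'$ need routine case handling. The map is injective because $\pi$ is recovered from $\pi^*$ by the inverse swap.

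\textbf{Why $\pi^*$ stays in $U$.} The cells of $\pi^*$ not occupied by $\pi$ lie in rows $i,i'$ or columns $j,j'$. No event in $S$ uses such cells, so $\pi^*$ triggers no event of $S$. Hence $\pi^*\in U_{ab}$.

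This verification, in particular handling the overlap cases cleanly, is the step needing the most care. Afterwards Theorem~\ref{theorem: Lopsided LLL}, via its symmetric Corollary~\ref{corollary: symmetric Lopsided LLL}, completes the proof.
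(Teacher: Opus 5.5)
Your proposal is correct and follows the paper's proof step for step: a uniform random permutation, the same bad events and row/column graph, the degree bound $d\le 4n(k-1)$, and Corollary~\ref{corollary: symmetric Lopsided LLL}, with $e\,p\,(d+1)\le 1$ rearranging exactly to \eqref{eq: Latin transversals}. Your only additions are the separate treatment of $k=1$ and the injection proof of the Erd\H{o}s--Spencer lopsidependency property, which the paper only cites. That injection is the standard one, but in the mixed overlap cases (e.g.\ $a=j'$, $b\notin\{j,j'\}$) your four assignments clash at row $i'$, so you need a $3$-cycle there instead; alternatively, handle all cases at once with $\pi^*:=\sigma\circ\pi$, where $\sigma$ is a column permutation supported on $\{j,j',a,b\}$ with $\sigma(j)=a$ and $\sigma(j')=b$.
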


\begin{proof}
Let $\pi$ be a uniformly random permutation of $[n]$.
Denote by $\mathcal T$ the set of all ordered quadruples
$(i,j,i',j')$ satisfying $i<i'$, $j\neq j'$, and $a_{i,j}=a_{i',j'}$.
Thus, $(i,j)$ and $(i',j')$ are positions containing the same symbol 
and lying in distinct rows and distinct columns.
For every $(i,j,i',j') \in \mathcal{T}$, define the event
\begin{align}
\label{eq: event A - Latin transversals}
A_{i,j,i',j'} = \bigl\{(\pi(i),\pi(i'))=(j,j')\bigr\}.
\end{align}
We first observe that
\begin{align}
\label{eq: equivalence - Latin transversal}
\mathbf A \text{ contains a Latin transversal}
\iff
\mathbb P\left(
\bigcap_{(i,j,i',j')\in\mathcal T}
\overline{A_{i,j,i',j'}}
\right)>0.
\end{align}
We prove each implication separately.
\begin{itemize}
\item Suppose that $\mathbf{A}$ contains a Latin transversal. Then, there exists a permutation
$\pi_0$ of $[n]$ such that $a_{i,\pi_0(i)} \neq a_{i',\pi_0(i')}$ for all $i<i'$.
Hence, for every $(i,j,i',j') \in \mathcal T$, we have
$\bigl( \pi_0(i), \pi_0(i') \bigr) \neq (j, j')$ since otherwise 
\[
a_{i,\pi_0(i)}=a_{i,j}=a_{i',j'}=a_{i',\pi_0(i')}, 
\]
contradicting the fact that the selected entries are distinct. Hence, none of the events 
$A_{i,j,i',j'}$ with $(i,j,i',j') \in \mathcal{T}$ occurs when $\pi=\pi_0$.
Since $\pi$ is uniformly distributed over the $n!$ permutations
of $[n]$, we have $\mathbb{P}(\pi=\pi_0) = \frac{1}{n!}>0$. Consequently,
\begin{align}
\label{eq1: 04.06.26}
\mathbb P\left(\bigcap_{(i,j,i',j') \in \mathcal{T}} \overline{A_{i,j,i',j'}}\right)>0.
\end{align}
\item Conversely, suppose that \eqref{eq1: 04.06.26} holds.
Then there exists a permutation $\pi_0$ of $[n]$ for which no event
$A_{i,j,i',j'}$ with $(i,j,i',j') \in \mathcal{T}$ occurs. We claim that
\[
T_{\pi_0}:= \bigl\{(i,\pi_0(i)): \, i\in[n] \bigr\}
\]
is a Latin transversal. Otherwise, there exist indices $i<i'$ such that
$a_{i,\pi_0(i)} = a_{i',\pi_0(i')}$. Set $j:=\pi_0(i)$ and $j':=\pi_0(i')$.
Since $\pi_0$ is a permutation and $i \neq i'$, we have $j \neq j'$. Then,
$(i,j,i',j') \in \mathcal{T}$ and $\bigl( \pi_0(i), \pi_0(i') \bigr)=(j,j')$.
Therefore, the event $A_{i,j,i',j'}$ occurs when $\pi = \pi_0$, contradicting 
the choice of $\pi_0$. Hence the entries $\{a_{i,\pi_0(i)}\}_{i=1}^n$ are 
pairwise distinct, and therefore $T_{\pi_0}$ is a Latin transversal.
\end{itemize}

Now fix $(i,j,i',j')\in\mathcal T$. Since $\pi$ is a uniformly random
permutation of $[n]$, and $i \neq i'$, $j \neq j'$,
\[
\mathbb{P}(A_{i,j,i',j'}) = \mathbb{P}(\pi(i)=j,\, \pi(i')=j')
= \frac{(n-2)!}{n!} = \frac{1}{n(n-1)} =:p.
\]
Define a graph $G$ on the vertex set $\mathcal T$ by joining two distinct
vertices $(i,j,i',j')$ and $(\ell,m,\ell',m')$ whenever
\begin{align}
\label{eq: adjacency condition Latin transversal}
\{i,i'\}\cap\{\ell,\ell'\}\neq\varnothing
\quad\text{or}\quad
\{j,j'\}\cap\{m,m'\}\neq\varnothing.
\end{align}
By a result of Erd\H{o}s and Spencer \cite[Section~2]{ErdosSpencer91}, the graph $G$ is a
lopsidependency graph for the family of events
\[
\{A_{i,j,i',j'}:(i,j,i',j') \in \mathcal T\}.
\]
By Definition~\ref{definition: lopsidependency graph}, let $(i,j,i',j') \in \mathcal{T}$, and 
let $S \subseteq \mathcal{T}$ be a set of vertices of $G$ that are non-adjacent to $(i,j,i',j')$ in $G$. Then
\[
\mathbb P\left( A_{i,j,i',j'} \,\middle|\,
\bigcap_{(\ell,m,\ell',m')\in S}
\overline{A_{\ell,m,\ell',m'}} \right) \leq \mathbb P(A_{i,j,i',j'}),
\]
which is the lopsidependency condition required by the lopsided LLL.

It remains to derive an upper bound on the maximum degree of $G$.
Fix a vertex $(i,j,i',j')\in\mathcal T$. An adjacent vertex must contain
a position $(s,t)$ such that $s\in\{i,i'\}$ or $t\in\{j,j'\}$.
There are at most $4n$ such positions $(s,t)$.
For each such position $(s,t)$, since every symbol occurs at most $k$ times
in the matrix, there are at most $k-1$ other positions $(s',t')$ satisfying
$a_{s,t}=a_{s',t'}$.
Each admissible pair of positions $(s,t)$ and $(s',t')$ lying in distinct rows
and distinct columns determines at most one vertex of $\mathcal T$. Consequently, every
vertex of $G$ has degree at most $4n(k-1)$, i.e.,
\begin{align}
d \leq 4n(k-1).
\end{align}
By the symmetric version of the lopsided LLL (Corollary~\ref{corollary: symmetric Lopsided LLL}),
it is enough that $ep(d+1) \leq 1$ to satisfy \eqref{eq1: 04.06.26}.
Using the bounds above, this is implied by
\[
e\cdot \frac{1}{n(n-1)}\cdot \bigl(4n(k-1)+1 \bigr) \leq 1,
\]
which is equivalent to \eqref{eq: Latin transversals}.
Finally, by \eqref{eq: equivalence - Latin transversal} and \eqref{eq1: 04.06.26}, the matrix 
$\mathbf{A}$ contains a Latin transversal.
\end{proof}

\begin{remark}
\label{remark: why ordinary LLL does not apply}
{\em The symmetric version of the LLL (Theorem~\ref{theorem: LLL - symmetric}) cannot be applied 
directly in the proof of Theorem~\ref{theorem: Latin transversals}. The reason is that, under
the uniform distribution on permutations, events involving disjoint rows and
columns are generally not independent.
For instance, suppose that $i,i',\ell,\ell'$ are distinct and that
$j,j',m,m'$ are distinct. Then, for $n \geq 4$,
\[
\mathbb P\bigl(
A_{i,j,i',j'} \cap A_{\ell,m,\ell',m'}\bigr) = \frac{1}{n(n-1)(n-2)(n-3)},
\]
whereas
\[
\mathbb P(A_{i,j,i',j'}) \, \mathbb P(A_{\ell,m,\ell',m'}) = \frac{1}{n^2(n-1)^2}.
\]
Thus, such events are not independent,
even though their corresponding positions lie in disjoint rows and columns. In fact, the
former quantity is larger than the latter, showing that these events are positively correlated.
Consequently, the occurrence of one such event makes the occurrence of the other more likely,
whereas the non-occurrence of one event tends to make the occurrence of the other less likely.
The lopsided LLL is therefore essential here. Rather than requiring independence, it suffices
that conditioning on the non-occurrence of non-neighboring bad events does not increase the
probability of a given bad event. Erd\H{o}s and Spencer showed that this property holds for
the events arising from random permutations \cite{ErdosSpencer91}, which allows the lopsided 
LLL to be applied in the proof of Theorem~\ref{theorem: Latin transversals}.}
\end{remark}

\begin{remark}
\label{remark: Latin squares}
{\em Theorem~\ref{theorem: Latin transversals} does not guarantee the existence of a
Latin transversal in an $n \times n$ Latin square with $n\geq 2$, since in that
case every symbol appears exactly $n$ times. Thus $k=n$, and the inequality
\eqref{eq: Latin transversals} is violated. Latin squares of even order need not
have a Latin transversal; for example, the Latin square
$\mathbf{A}=(a_{i,j})_{i,j=1}^{\,n}$, where $a_{i,j}:=(i+j)\pmod n$, has no
Latin transversal for even $n\geq 2$. In contrast, Ryser's conjecture states
that every Latin square of odd order has a Latin transversal; see, for example,
\cite[Conjecture~3.2]{Wanless2011}.}
\end{remark}

\section{The Cluster-Expansion Lemma}
\label{section: cluster-expansion lemma}

This section presents the Cluster-Expansion Lemma, due to Bissacot, Fern\'{a}ndez,
Procacci, and Scoppola~\cite{BissacotFPS2011}. Their original proof relied on
cluster-expansion techniques from statistical physics. Subsequently, 
Pegden~\cite{Pegden14} established an algorithmic counterpart within the Moser--Tardos 
framework, and later Harvey and Vondr\'{a}k~\cite{HarveyV20} gave a short purely
combinatorial proof based on Shearer's work~\cite{Shearer85}. Since then, the 
lemma has yielded several improved bounds in combinatorics (see, e.g.,
\cite{BissacotFPS2011, BottcherKP2012, NdrecaPS12}).

We begin by stating the Cluster-Expansion Lemma. We then discuss its relationship
to the LLL (Theorem~\ref{theorem: LLL}), including its connection to the symmetric
form of the LLL. Finally, we revisit the Latin transversal problem from
Section~\ref{subsection: Latin transversals via the lopsided LLL} and show that the
Cluster-Expansion Lemma yields a stronger sufficient condition for the existence 
of a Latin transversal than that obtained there via the lopsided LLL.

\begin{theorem}[Cluster-Expansion Lemma {\cite{BissacotFPS2011}}]
\label{theorem: cluster-expansion lemma}
{\em Let $A_1,\ldots,A_n$ be a finite collection of events in a probability
space, and let $G=([n],E)$ be a dependency
graph for $\{A_i\}_{i=1}^n$. For each $i\in[n]$, define
\[
\Gamma_G(i)=\{j \in [n] \setminus \{i\}: \{i,j\} \in E\},
\qquad
\Gamma_G^+(i)=\Gamma_G(i)\cup\{i\}.
\]
For $S \subseteq [n]$, let $G[S]$ denote the subgraph of $G$
induced by the vertex set $S$, and let $\mathrm{Ind}(G[S])$
denote the family of independent sets of $G[S]$. Suppose that 
there exist numbers $y_1, \ldots, y_n > 0$ such that
\begin{align}
\label{eq:cluster-condition}
\mathbb{P}(A_i) \leq \frac{y_i}
{\displaystyle \sum_{I\in \mathrm{Ind}(G[\Gamma_G^+(i)])} \, \prod_{j\in I} y_j},
\qquad \forall\, i\in[n].
\end{align}
Then,
\begin{align}
\label{eq:cluster-lower-bound}
\mathbb{P}\left(\bigcap_{i=1}^n \overline{A_i}\right)
\geq \left( \sum_{I\in \mathrm{Ind}(G)} \prod_{i\in I} y_i
\right)^{-1}.
\end{align}}
\end{theorem}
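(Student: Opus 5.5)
I will follow the unconditional induction used for Lemma~\ref{lemma: LLL-unconditional}, but with the cluster weights in place of the $x_i$. For $S\subseteq[n]$ write $F(S)=\bigcap_{j\in S}\overline{A_j}$ and $Z(S):=\sum_{I\in\mathrm{Ind}(G[S])}\prod_{j\in I}y_j$, with $Z(\varnothing)=1$. Every independent set of $G[S]$ either avoids $i$ or contains $i$ and nothing from $\Gamma_G(i)$. This gives the standard recursion
\begin{align*}
Z(S)=Z(S\setminus\{i\})+y_i\,Z\bigl(S\setminus\Gamma_G^+(i)\bigr), \qquad i\in S.
\end{align*}
The goal is the two-sided inductive statement: for every $S$ and $i\notin S$,
\begin{align*}
\mathbb{P}\bigl(A_i\cap F(S)\bigr)\le \frac{y_i}{Z(S\cup\{i\})}\,\mathbb{P}\bigl(F(S)\bigr)\cdot\frac{Z(S)}{1}\cdot\frac{1}{Z(S)}\!,
\end{align*}
or, more cleanly, the ratio form
\begin{align*}
\mathbb{P}\bigl(F(S\cup\{i\})\bigr)\ \ge\ \frac{Z(S)}{Z(S\cup\{i\})}\,\mathbb{P}\bigl(F(S)\bigr).
\end{align*}
Telescoping over $S=\varnothing,\{1\},\{1,2\},\dots$ yields $\mathbb{P}(F([n]))\ge Z(\varnothing)/Z([n])=1/Z([n])$, which is exactly \eqref{eq:cluster-lower-bound}.

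To prove the ratio inequality, note that $\mathbb{P}(F(S))-\mathbb{P}(F(S\cup\{i\}))=\mathbb{P}(A_i\cap F(S))$. It therefore suffices to show
\begin{align*}
\mathbb{P}\bigl(A_i\cap F(S)\bigr)\le y_i\,\frac{Z(S\setminus\Gamma_G(i))}{Z(S\cup\{i\})}\,\mathbb{P}\bigl(F(S)\bigr),
\end{align*}
because by the recursion $1-y_iZ(S\setminus\Gamma_G(i))/Z(S\cup\{i\})=Z(S)/Z(S\cup\{i\})$. Mimicking \eqref{eq: S1S2}, split $S=S_1\,\dot\cup\,S_2$ with $S_1=S\cap\Gamma_G(i)$. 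As in \eqref{eq: num-ub}, independence of $A_i$ from $F(S_2)$ gives
\begin{align*}
\mathbb{P}\bigl(A_i\cap F(S)\bigr)\le \mathbb{P}(A_i)\,\mathbb{P}\bigl(F(S_2)\bigr).
\end{align*}
For the lower bound on $\mathbb{P}(F(S))$ I will add the elements of $S_1$ one at a time, as in \eqref{eq: denom-lb}, applying the induction hypothesis (valid since each intermediate set is smaller than $|S|+1$). The ratios telescope to
\begin{align*}
\mathbb{P}\bigl(F(S)\bigr)\ge \frac{Z(S_2)}{Z(S)}\,\mathbb{P}\bigl(F(S_2)\bigr).
\end{align*}
Combining the two displays, the target reduces to the purely combinatorial inequality
\begin{align*}
\mathbb{P}(A_i)\,\frac{Z(S)}{Z(S_2)}\le y_i\,\frac{Z(S_2)}{Z(S\cup\{i\})},
\end{align*}
where I used $S\setminus\Gamma_G(i)=S_2$.

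This reduced inequality is the main obstacle. By \eqref{eq:cluster-condition} it suffices to prove
\begin{align*}
Z(S)\,Z(S\cup\{i\})\le Z(S_2)^2\, Z\bigl(\Gamma_G^+(i)\bigr),
\end{align*}
or some correct variant of it; the exact form may need adjusting. I would attack this by the submultiplicativity of independence polynomials, $Z(U\cup W)\le Z(U)Z(W)$, which holds because an independent set of $G[U\cup W]$ restricts to independent sets of $G[U]$ and of $G[W]$, and all weights are positive. This gives $Z(S)\le Z(S_2)Z(S_1)$, and $Z(S\cup\{i\})=Z(S)+y_iZ(S_2)$. The naive bound on the product seems lossy, however. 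If it fails, I would instead strengthen the induction hypothesis to the sharper form used by Harvey and Vondr\'{a}k, namely
\begin{align*}
\frac{\mathbb{P}(F(S\cup\{i\}))}{\mathbb{P}(F(S))}\ge \frac{Z(S\cup\{i\})}{Z(S)}\Big|_{\text{signed}}.
\end{align*}
Here $Z$ is replaced by the alternating independence polynomial $q(S)=\sum_{I}\prod_{j\in I}(-p_j)$. One then shows, via the same recursion with $-y$ weights and the single-vertex comparison $Z(S_1)\le Z(\Gamma_G(i))$ (monotonicity in the vertex set), that the cluster condition places $(p_i)$ inside Shearer's region with $q(S\cup\{i\})/q(S)\ge Z(S)/Z(S\cup\{i\})$. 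The base case $S=\varnothing$ is just $\mathbb{P}(A_i)\le y_i/Z(\Gamma^+_G(i))\le y_i/(1+y_i)$.
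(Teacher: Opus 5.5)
\textbf{There is a genuine gap.} It is in your inductive invariant itself, not only in the combinatorial inequality you flag. Under \eqref{eq:cluster-condition}, the ratio form $\mathbb{P}(F(S\cup\{i\}))\geq \frac{Z(S)}{Z(S\cup\{i\})}\,\mathbb{P}(F(S))$ is false. (The paper gives no proof of this theorem; it only cites the literature.)

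Take $G$ to be the path $1-2-3$ and $y_1=y_2=y_3=1$. Choose events with $\mathbb{P}(A_1)=\mathbb{P}(A_3)=\tfrac13$ and $\mathbb{P}(A_2)=\tfrac15$, where $A_1,A_3$ are independent and $A_2$ is disjoint from $A_1\cup A_3$. This is possible since $\tfrac59+\tfrac15<1$. The path is a dependency graph. Since $Z(\{1,2\})=Z(\{2,3\})=3$ and $Z(\{1,2,3\})=5$, condition \eqref{eq:cluster-condition} holds with equality for every $i$.

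Here $\mathbb{P}(F(\{1,2\}))=\tfrac{7}{15}$ and $\mathbb{P}(F(\{1,2,3\}))=\tfrac{11}{45}$. Your claim with $S=\{1,2\}$, $i=3$ would give $\tfrac{11}{45}\geq\tfrac35\cdot\tfrac{7}{15}=\tfrac{7}{25}$, which is false. Each of the six insertion orders fails at some step, so no ordering rescues the telescoping, even though the final bound $\tfrac{11}{45}\geq\tfrac15$ does hold.

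The other pieces fail on the same example:
\begin{itemize}
\item Your reduced inequality fails: $\tfrac13\cdot\tfrac32>\tfrac25$.
\item The sufficient inequality $Z(S)Z(S\cup\{i\})\leq Z(S_2)^2Z(\Gamma_G^+(i))$ fails: $15>12$.
\item The fallback fails too. This is Shearer's extremal space, where $\mathbb{P}(F(S))=q(S)$ for every $S$. So the inequality $q(S\cup\{i\})/q(S)\geq Z(S)/Z(S\cup\{i\})$ that you intend to derive from the cluster condition is false here as well.
\end{itemize}

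\textbf{The missing idea.} The bound $1/Z([n])$ is not a product of one-step ratios. What does hold is a comparison at a fixed level $S$ with the hard-core weights.

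Set $p_j=\mathbb{P}(A_j)$, $p^J=\prod_{j\in J}p_j$, and $R_J(S):=p^J q(S\setminus\Gamma_G^+(J))/q(S)$ for independent $J\subseteq S$. The identity $1=\sum_{J\in\mathrm{Ind}(G[S])}p^J q(S\setminus\Gamma_G^+(J))$ gives $1/q(S)=\sum_J R_J(S)$. So it suffices to prove $R_J(S)\leq\prod_{j\in J}y_j$. Then $q([n])\geq 1/Z([n])$, and Shearer's theorem gives $\mathbb{P}(F([n]))\geq q([n])$. The single-vertex case $q(S\setminus\{a\})\leq(1+y_a)q(S)$ alone would telescope only to $\prod_i(1+y_i)^{-1}$.

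You cannot prove $R_a(S)\leq y_a$ by stepping down from $S$ to $S\setminus\{a\}$. That step discards the fact that the cluster condition at each $b\in\Gamma_G(a)$ involves $y_a$, which is exactly the loss you observed. Instead, treat all vertices of $S$ at once, using:
\begin{itemize}
\item the exact factorization $R_K(S)=R_{K\setminus\{k\}}(S\setminus\Gamma_G^+(k))\,R_k(S)$ for $k\in K$;
\item the identity $(1-p_a)R_a(S)=p_a\bigl(1+\sum_{\varnothing\neq K\in\mathrm{Ind}(G[S\cap\Gamma_G(a)])}R_K(S)\bigr)$;
\item induction on $|S|$;
\item the bound $p_a/(1-p_a)\leq y_a/Z(\Gamma_G(a))$, which follows from \eqref{eq:cluster-condition}.
\end{itemize}
With $M:=\max_{a\in S}R_a(S)/y_a$, these give $M\leq\bigl(1+M(Z(\Gamma_G(a))-1)\bigr)/Z(\Gamma_G(a))$ at a maximizing $a$, hence $M\leq 1$.

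Positivity of $q(S)$, which is needed to define $R$, follows by continuity along $\lambda p$ for $\lambda\in[0,1]$. The bound $q(S)\geq 1/Z(S)$ persists on any initial interval where $q$ stays positive, so $q(S)$ never reaches zero.
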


\subsection{Connection between the Cluster-Expansion Lemma and the LLL}
\label{subsection: Connection between CEL and LLL}

The Cluster-Expansion Lemma can be viewed as a refinement of the LLL when the latter is 
formulated in terms of undirected dependency graphs. The following remarks show that the 
undirected-graph formulation of the LLL follows from a suitable relaxation of the 
Cluster-Expansion Lemma.

\begin{remark}[From digraph to graph]
\label{remark:digraph-to-graph}
{\em The LLL is formulated in Theorem~\ref{theorem: LLL} in terms of a dependency 
digraph $D=([n],E)$. Given such a digraph, one may associate an undirected 
graph $G=([n],E')$, defined by
\begin{align}
\label{eq: from digraph to graph}
\{i,j\}\in E' \quad \iff \quad (i,j)\in E \ \text{or}\ (j,i)\in E.
\end{align}
All notions of independence in Theorem~\ref{theorem: cluster-expansion lemma} are with
respect to this graph $G$.
It follows from \eqref{eq: from digraph to graph} that the neighborhood of a
vertex in $G$ consists of all vertices that are connected to it by either an
incoming or an outgoing edge in $D$. Consequently, the neighborhood of a vertex
in $G$ may be strictly larger than the set of vertices reachable from it by
outgoing edges in $D$. This distinction is relevant when comparing results
stated for dependency digraphs, such as the LLL, with results formulated for
undirected dependency graphs, such as the Cluster-Expansion Lemma.}
\end{remark}

\begin{remark}[Product-type lower bound and relation to the LLL]
\label{remark:product-LLL}
{\em Under the assumptions of Theorem~\ref{theorem: cluster-expansion lemma},
\begin{align}
\sum_{I \in \mathrm{Ind}(G)} \prod_{i \in I} y_i
& \leq \sum_{I \subseteq [n]} \prod_{i \in I} y_i \nonumber \\
\label{eq1: 24.04.26}
&= \prod_{i=1}^n (1+y_i),
\end{align}
where the last inequality holds since the summation on the left is restricted to the
independent sets of $G$, whereas the summation on the right ranges over all subsets
of $[n]$. Combining this with \eqref{eq:cluster-lower-bound} yields
\begin{align}
\mathbb{P}\left(\bigcap_{i=1}^n \overline{A_i}\right)
\geq \prod_{i=1}^n \frac{1}{1+y_i}.
\label{eq:product-bound}
\end{align}
Similarly, for all $i \in [n]$, since every independent set in the induced
subgraph $G[\Gamma_G^+(i)]$ is, in particular, a subset of $\Gamma_G^+(i)$,
we obtain
\begin{align}
\sum_{I \in \mathrm{Ind}(G[\Gamma_G^+(i)])} \, \prod_{j \in I} y_j
& \leq \sum_{I \subseteq \Gamma_G^+(i)}\prod_{j\in I} y_j \nonumber \\
& = \prod_{j \in \Gamma_G^+(i)}(1+y_j) \nonumber \\
\label{eq2: 24.04.26}
& = (1+y_i) \prod_{j: \, \{i,j\} \in E} (1+y_j),
\end{align}
and therefore (recall that $y_i > 0$)
\begin{align}
\label{eq1: 22.04.26}
\frac{y_i}{\displaystyle \sum_{I\in \mathrm{Ind}(G[\Gamma_G^+(i)])} \, \prod_{j\in I} y_j}
\geq \frac{y_i}{1+y_i} \; \prod_{j: \, \{i,j\} \in E} \frac1{1+y_j}.
\end{align}
By \eqref{eq1: 22.04.26}, the condition in \eqref{eq:cluster-condition} can be
replaced by the following stronger sufficient condition:
\begin{align}
\label{eq:cluster-stronger-condition}
\mathbb{P}(A_i) \leq \frac{y_i}{1+y_i} \; \prod_{j: \, \{i,j\} \in E} \frac1{1+y_j},
\qquad \forall\, i\in[n].
\end{align}
Setting $y_i := \frac{x_i}{1-x_i}$, with $x_i \in (0,1)$, gives
$\frac{1}{1+y_i} = 1 - x_i$ and $\frac{y_i}{1+y_i} = x_i$. Hence,
\eqref{eq:cluster-stronger-condition} and \eqref{eq:product-bound} 
reduce to \eqref{eq1: condition} and \eqref{eq1: LLL}, respectively,
recovering the standard form of the LLL.
The derivation above relies on a crude product bound that ignores
independence constraints, and is therefore not tight. In this sense, the lower
bound provided by the Cluster-Expansion Lemma reduces to that of the LLL after
the above relaxation. The Cluster-Expansion Lemma is formulated in terms of an
undirected dependency graph, whereas the LLL (Theorem~\ref{theorem: LLL}) is 
formulated in terms a dependency digraph. By Remark~\ref{remark:digraph-to-graph}, 
passing from a dependency digraph to its associated undirected graph may strengthen 
the sufficient conditions. Therefore, the above argument does not yield a derivation
of the LLL from the Cluster-Expansion Lemma, but rather highlights the relationship 
between the two results.}
\end{remark}

\subsection{Latin transversals via the Cluster-Expansion Lemma}
\label{subsection: Latin transversals - cluster expansion}

In Section~\ref{subsection: Latin transversals via the lopsided LLL}, we proved
Theorem~\ref{theorem: Latin transversals} by applying the lopsided LLL
to a family of bad events associated with a uniformly random permutation
of $[n]$. The same probabilistic model admits a stronger analysis via the
Cluster-Expansion Lemma, which yields an improved sufficient
condition for the existence of a Latin transversal.

\begin{theorem}[Proposition~4.2 of \cite{BissacotFPS2011}]
\label{theorem: BissacotFPS2011 - Proposition 4.2}
{\em Let $\mathbf{A}=(a_{i,j})$ be an $n\times n$ matrix in which, for every
symbol, the maximum number of its occurrences is at most $k$. If
\begin{align}
\label{eq: BissacotFPS2011 - Proposition 4.2}
k \leq \frac{27}{256} \; (n-1)+1,
\end{align}
then $\mathbf{A}$ contains a Latin transversal.}
\end{theorem}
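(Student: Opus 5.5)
The plan is to reuse the probabilistic model from the proof of Theorem~\ref{theorem: Latin transversals}, but replace the symmetric lopsided LLL with the Cluster-Expansion Lemma (Theorem~\ref{theorem: cluster-expansion lemma}). So $\pi$ is a uniformly random permutation of $[n]$, the bad events are the events $A_{i,j,i',j'}$, $(i,j,i',j')\in\mathcal T$, each with probability $p=\frac{1}{n(n-1)}$, and $G$ is the graph defined by \eqref{eq: adjacency condition Latin transversal}.

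\textbf{Step 1 (lopsided version of the lemma).} $G$ is only a lopsidependency graph, so we need Theorem~\ref{theorem: cluster-expansion lemma} under the weaker hypothesis \eqref{eq2: LLLL}. This is the version used in~\cite{BissacotFPS2011}. The justification would mirror the proof of Theorem~\ref{theorem: Lopsided LLL}: in the inductive argument, independence is used only to bound $\mathbb{P}(A_i\cap F(S_2))$ from above by $\mathbb{P}(A_i)\,\mathbb{P}(F(S_2))$, and \eqref{eq2: LLLL} provides exactly this bound.

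\textbf{Step 2 (structure of neighborhoods).} Fix a vertex $v=(i,j,i',j')$. Every neighbor of $v$ uses row $i$, row $i'$, column $j$, or column $j'$. Let $K_1,\dots,K_4$ be the sets of vertices of $\mathcal T$ other than $v$ that use row $i$, row $i'$, column $j$, column $j'$, respectively. Then:
\begin{itemize}
\item any two vertices in the same $K_r$ share a row or a column, so each $K_r$ is a clique in $G$;
\item $\Gamma_G(v)\subseteq K_1\cup\cdots\cup K_4$.
\end{itemize}
Row $i$ contains $n$ positions, and each has at most $k-1$ partner positions carrying the same symbol. Hence at most $n(k-1)$ vertices use row $i$. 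One of them is $v$, so $|K_r|\le m-1$ with $m:=n(k-1)$, and the same count holds for the other rows and columns.

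\textbf{Step 3 (bounding the independent-set sum).} Take $y_u:=y>0$ for every vertex $u$. An independent set of $G[\Gamma_G^+(v)]$ is of one of two kinds:
\begin{itemize}
\item the singleton $\{v\}$, since $v$ is adjacent to all of $\Gamma_G(v)$;
\item a set avoiding $v$, which meets each clique $K_r$ in at most one vertex.
\end{itemize}
Therefore
\[
\sum_{I\in\mathrm{Ind}(G[\Gamma_G^+(v)])}\prod_{u\in I}y_u\le y+\bigl(1+(m-1)y\bigr)^4\le (1+my)^4 .
\]
The last inequality holds because $(1+my)^4-(1+(m-1)y)^4\ge 4y$. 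It suffices to have
\[
\frac{1}{n(n-1)}\le \frac{y}{(1+my)^4}.
\]
The right-hand side is maximized at $y=\frac{1}{3m}$, where it equals $\frac{1}{3m}\bigl(\tfrac34\bigr)^4=\frac{27}{256\,m}$. The condition then becomes $256\,n(k-1)\le 27\,n(n-1)$, which is exactly \eqref{eq: BissacotFPS2011 - Proposition 4.2}. The case $k=1$ is trivial, and for $k\ge 2$ we have $m\ge 1$, so the choice of $y$ is legitimate.

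\textbf{Conclusion.} Theorem~\ref{theorem: cluster-expansion lemma}, in its lopsided form, gives $\mathbb{P}\bigl(\bigcap\overline{A_{i,j,i',j'}}\bigr)>0$. The equivalence \eqref{eq: equivalence - Latin transversal} then yields a Latin transversal.

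I expect Step~1 to be the main obstacle: the paper states the Cluster-Expansion Lemma only for genuine dependency graphs. My first approach would be to check that the Harvey--Vondr\'ak/Shearer-type inductive proof uses independence only through that one upper bound. Failing that, I would cite the lopsided statement from~\cite{BissacotFPS2011} directly. The clique decomposition in Step~2 is the key combinatorial gain over Theorem~\ref{theorem: Latin transversals}: it replaces the factor $(1+y)^{4m}$ that the LLL relaxation would give by $(1+my)^4$.
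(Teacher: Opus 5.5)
Your proposal is correct and follows essentially the same route as the paper. Both use the same random-permutation model and graph $G$, the same cover of the neighborhood by the four row/column cliques of size at most $n(k-1)$, constant weights, and the same optimization at $y=\frac{1}{3n(k-1)}$ (the paper's $\alpha=\frac13$), which gives $\frac{27}{256}$. The paper places the center vertex in all four cliques rather than treating $\{v\}$ separately, which yields the same bound $(1+n(k-1)y)^4$ directly.

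Your Step~1 addresses a point the paper leaves implicit. The paper applies the Cluster-Expansion Lemma to $G$ as if it were a dependency graph, although by Remark~\ref{remark: why ordinary LLL does not apply} it is only a lopsidependency graph. Invoking the lopsided version of the lemma is the right way to close this gap, either by citing \cite{BissacotFPS2011} or by checking that the Shearer-type proof uses independence only through $\mathbb{P}(A_i\cap F(S_2))\le\mathbb{P}(A_i)\,\mathbb{P}(F(S_2))$.
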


\begin{proof}
We retain the notation and probabilistic construction from the proof of
Theorem~\ref{theorem: Latin transversals}.
Thus, $\pi$ is a uniformly random permutation of $[n]$,
$\mathcal{T}$ is the set of all ordered quadruples
$(i,j,i',j')$ such that $i<i'$, $j\neq j'$, and
$a_{i,j}=a_{i',j'}$, and
\[
A_{i,j,i',j'} = \bigl\{(\pi(i),\pi(i'))=(j,j')\bigr\},
\qquad (i,j,i',j') \in \mathcal{T}.
\]
As shown earlier,
\begin{align}
\label{eq: p-value, 18.06.26}
\mathbb P(A_{i,j,i',j'}) = \frac{1}{n(n-1)} =:p,
\qquad (i,j,i',j') \in \mathcal{T}.
\end{align}
Furthermore, the existence of a Latin transversal is equivalent to the validity of inequality
\eqref{eq1: 04.06.26}, so it suffices to prove the latter.

Let $G$ be the dependency graph on vertex set $\mathcal{T}$ introduced in the proof of
Theorem~\ref{theorem: Latin transversals}, whose adjacency condition is given by 
\eqref{eq: adjacency condition Latin transversal}). Fix a vertex $x=(i,j,i',j') \in \mathcal{T}$.
Following the argument in~\cite{BissacotFPS2011},
the neighborhood of $x$ can be covered by the four cliques
$C_i, \, C_{i'}, \, C_j, \, C_{j'}$,
where $C_i$ and $C_{i'}$ consist of all vertices in
$\mathcal{T}$ involving rows $i$ and $i'$, respectively, and
$C_j$ and $C_{j'}$ consist of all vertices in $\mathcal{T}$ 
involving columns $j$ and $j'$, respectively.
Indeed, if a vertex $(\ell,\ell',m,m')$ is adjacent to $x$, then by the adjacency condition
\eqref{eq: adjacency condition Latin transversal}, 
it must share one of the rows $i,i'$ or one of the columns $j,j'$.

As observed in the proof of Theorem~\ref{theorem: Latin transversals},
each of these four cliques contains at most $n(k-1)$ vertices.
Indeed, after fixing a row or a column, there are at most $n$ choices for
one entry and at most $k-1$ choices for a second entry containing the same 
symbol in a distinct row and column.

Let $\mu>0$, and set $y_x = \mu$ for all $x \in \mathcal{T}$. 
Since every independent set of the induced subgraph $G[\Gamma_G^{+}(x)]$
contains at most one vertex from each of the four cliques, we have 
\begin{align}
\sum_{I \in \mathrm{Ind}(G[\Gamma_G^+(x)])} \prod_{z \in I} y_z 
&= \sum_{I \in \mathrm{Ind}(G[\Gamma_G^+(x)])}  \mu^{|I|} \nonumber \\
\label{eq2: 18.06.26}
&\leq \bigl( 1 + \mu n (k-1) \bigr)^4.
\end{align}
Therefore, condition~\eqref{eq:cluster-condition} is satisfied whenever
\begin{align}
\label{eq3: 18.06.26}
p \leq \frac{\mu}{\bigl(1+n(k-1)\mu\bigr)^4}.
\end{align}
Substituting $p=\frac{1}{n(n-1)}$ from \eqref{eq: p-value, 18.06.26} and setting 
\begin{align}
\label{eq4: 18.06.26}
\mu :=\frac{\alpha}{n(k-1)}, \quad \alpha>0,
\end{align}
inequality \eqref{eq3: 18.06.26} becomes 
\begin{align}
\label{eq5: 18.06.26}
k-1 \leq \frac{\alpha \, (n-1)}{(1+\alpha)^4}, \quad \alpha>0.
\end{align}
Define $f(\alpha) = \frac{\alpha}{(1+\alpha)^4}$ for $\alpha>0$. Elementary calculus shows that
$f$ attains its maximum at $\alpha=\frac13$, where $f\bigl(\frac13\bigr) = \frac{27}{256}$.
Maximizing the right-hand side of \eqref{eq5: 18.06.26} over $\alpha > 0$, we recover  
inequality~\eqref{eq: BissacotFPS2011 - Proposition 4.2}. Consequently, by \eqref{eq: p-value, 18.06.26} 
and the choice $y_x = \mu$ for all $x \in \mathcal{T}$, condition~\eqref{eq:cluster-condition} 
is satisfied for the family of events $\{A_{i,j,i',j'}\}_{(i,j,i',j') \in \mathcal{T}}$.
By the Cluster-Expansion Lemma, inequality~\eqref{eq1: 04.06.26} holds. Hence, by the equivalence
established in \eqref{eq: equivalence - Latin transversal}, the matrix $\mathbf{A}$ contains a 
Latin transversal.
\end{proof}

\begin{remark}
\label{remark: Latin transversals cluster expansion}
{\em Theorem~\ref{theorem: BissacotFPS2011 - Proposition 4.2}
improves Theorem~\ref{theorem: Latin transversals} by replacing the
condition \eqref{eq: Latin transversals} with the weaker condition
\eqref{eq: BissacotFPS2011 - Proposition 4.2}. Since
$\tfrac{27}{256}\approx 0.10547$ and $\frac{1}{4e}\approx 0.09197$,
the cluster-expansion refinement permits a strictly larger admissible
range of values for $k$.
This improvement was one of the motivating examples in
\cite{BissacotFPS2011}, illustrating how the Cluster-Expansion Lemma
can exploit the clique structure of dependency neighborhoods more
effectively than the classical LLL when the dependency graph is undirected.
The reader is referred to the work of Harris and Srinivasan~\cite{HarrisS14}, 
which gave the first randomized polynomial-time algorithm for constructively 
finding a Latin transversal under the sufficient condition of 
Theorem~\ref{theorem: BissacotFPS2011 - Proposition 4.2}.}
\end{remark}

\section{The iterated Lov\'{a}sz Local Lemma}
\label{section: iterated LLL}

In the preceding sections, the LLL is applied as a one-shot probabilistic tool: one defines a collection
of bad events and proves that there is a positive probability that none of them occurs simultaneously. In many
combinatorial problems, however, a direct application of the lemma is either impossible or yields only weak
results because the dependencies among the bad events are too strong. A powerful extension of this idea is
provided by the \emph{iterated LLL}, also known as the \emph{semi-random method}, which is one of the cornerstones
of probabilistic combinatorics.

The origins of this methodology can be traced to R\"{o}dl's introduction of the nibble method in his work on
hypergraph packings and coverings \cite{Rodl1985}. Building on earlier probabilistic constructions and ideas,
including those of Ajtai, Koml\'{o}s, Pintz, Spencer, and Szemer\'{e}di \cite{AjtaiKPSS1982}, the nibble
method and related semi-random techniques evolved into a powerful framework that underlies many of the strongest
results in modern probabilistic combinatorics, particularly in graph and hypergraph coloring
\cite{Johansson1996,MolloyR2002,Molloy2019}.

The basic philosophy is to construct the desired object gradually. Instead of making all random choices at once,
one performs a partial random construction, removes the resulting conflicts, and then repeats the procedure on the
remaining unresolved portion of the problem. The residual structure typically becomes simpler after each stage,
eventually reducing the problem to a setting where a final application of the LLL or a deterministic argument
completes the construction.

A simple example arises in graph coloring. Let $G$ be a graph with maximum degree $\Delta$, and suppose that
one wishes to produce a proper coloring using relatively few colors. Rather than coloring all vertices at once,
consider the following iterative procedure. During a given round, every uncolored vertex independently selects
a tentative color from a fixed set of colors. If two adjacent vertices are assigned the same color, their tentative
assignments are discarded. Vertices that are not involved in any conflict keep their colors permanently, and the
procedure is repeated on the subgraph induced by the remaining uncolored vertices.

The key question is whether the residual graph becomes significantly simpler after one round. To analyze this, one
studies local parameters such as the number of uncolored neighbors of a given vertex $v$ after the conflict-resolution
step. By linearity of expectation, one first shows that the expected residual degree of $v$ is smaller than its original
degree, often by a fixed multiplicative factor. However, expectation alone is insufficient; one must also show that most
vertices behave close to this average. Appropriate concentration inequalities are therefore used to show that large
deviations from the expected behavior are unlikely, occurring with exponentially small probability; see
\cite{BLM2013, RaginskySason2013} for treatments of the subject.

One then defines a bad event $A_v$ to be the event that the residual degree of $v$ exceeds a prescribed threshold. Although
each bad event has small probability, the events are not independent because nearby vertices are influenced by many of the
same random color choices. Nevertheless, each event depends only on random choices within a bounded neighborhood of $v$, and
hence each event is dependent on only a limited number of others. The LLL can therefore be applied to show that, with positive
probability, none of the events $\{A_v\}$ occurs. Consequently, there exists a round in which every vertex simultaneously
experiences the prescribed reduction in residual degree.

This combination of concentration inequalities and the LLL is the hallmark of the semi-random method. Concentration inequalities
provide local control by showing that undesirable deviations are individually unlikely, while the LLL converts these local estimates
into a global statement asserting that all vertices satisfy the required property simultaneously. As a result, after one iteration, 
the maximum degree of the residual graph is significantly smaller than that of the original graph.

The procedure can then be repeated. If the maximum degree decreases by a constant factor in each round, then after $O(\log \Delta)$
iterations the residual graph has bounded degree. At that stage, the remaining coloring problem can often be completed by elementary
methods or by a final application of the LLL. This iterative reduction of complexity is the essence of the semi-random method.

The semi-random method should also be distinguished from the algorithmic framework of Moser and Tardos in Section~\ref{section: Moser--Tardos Algorithm}.
Recall that, in the latter approach, one begins with a complete random assignment and repeatedly resamples the variables involved in occurring bad events.
By contrast, the semi-random method constructs the desired object incrementally, repeatedly simplifying the remaining instance through a sequence
of random and deterministic steps. Although both approaches originate from the ideas underlying the LLL, they employ fundamentally different mechanisms
and have led to distinct developments in probabilistic combinatorics.

\section{Outlook}
\label{section: outlook}

Fifty years after its inception in \cite{LLL75}, the LLL
continues to pose fundamental conceptual and technical
challenges. While Shearer’s criterion \cite{Shearer85} (Theorem~\ref{theorem: Shearer})
provides a sharp characterization of the existential regime, an equally complete
understanding of the algorithmic landscape remains elusive.

The algorithmic aspects of the LLL are now well understood in the
variable framework, most notably through the Moser--Tardos resampling algorithm
\cite{MoserTardos10} and its subsequent analyses (see \cite{HeLLWX26} and references
therein). Despite substantial progress, the situation is less clear in more general
settings, particularly in the presence of arbitrary dependency structures. The
diversity of modern formulations, ranging from lopsided and resampling-oracle variants
to commutativity-based conditions \cite{Szegedy2013}, suggests that a unifying
perspective is still lacking.

Moreover, important directions remain only partially understood. One direction
concerns the complexity of distributed and local algorithms for constructing
configurations guaranteed by the lemma, particularly near the threshold of its
applicability. Another direction concerns approximate counting and sampling in
parameter regimes where the LLL guarantees the existence of configurations
avoiding all bad events. A further direction seeks to extend and refine the
lemma in more general settings, including measurable probability spaces and
non-commutative (quantum) frameworks. While substantial progress has been made
on measurable versions of the LLL and related measurable graph-coloring problems,
notably through the work of Bernshteyn \cite{Bernshteyn2019}, a complete understanding
of the scope, limitations, and optimal conditions of such extensions has yet to emerge.
In each of these directions, many fundamental questions concerning sharp thresholds,
constructive methods, and the precise boundaries of applicability remain open.

\section*{Appendices}

\appendix

\section{Completion of the proof of Corollary~\ref{corollary: 1/2}}
\label{appendix: 1/2}

\begin{lemma}
\label{lemma: 1/2}
{\em For every $d \in \mathbb{N}$,
\begin{align}
\label{eq1: 04.04.26}
\frac{d^d}{(d+1)^{d+1}} > \frac{1}{e\bigl(d+\frac12\bigr)}.
\end{align}
Moreover, $\alpha = \frac12$ is the smallest constant such that
\begin{align}
\label{eq2: 04.04.26}
\frac{d^d}{(d+1)^{d+1}} \geq \frac{1}{e(d+\alpha)}, \quad \forall \, d \in \mathbb{N}
\end{align}
holds.}
\end{lemma}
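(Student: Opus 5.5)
Taking reciprocals, \eqref{eq1: 04.04.26} is equivalent to
\[
e\Bigl(d+\tfrac12\Bigr) > \frac{(d+1)^{d+1}}{d^d} = (d+1)\Bigl(1+\frac1d\Bigr)^{d}.
\]
After taking logarithms, this becomes
\[
g(d) := 1 + \ln\Bigl(d+\tfrac12\Bigr) - \ln(d+1) - d\ln\Bigl(1+\frac1d\Bigr) > 0 .
\]
I will prove the stronger statement that $g(t)>0$ for every real $t>0$.

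Put $u:=1/t\in(0,\infty)$. Then
\[
\ln\frac{t+\frac12}{t+1} = \ln\Bigl(1+\frac u2\Bigr)-\ln(1+u),
\]
so the claim reads
\[
h(u) := 1 + \ln\Bigl(1+\frac u2\Bigr) - \ln(1+u) - \frac{\ln(1+u)}{u} > 0 .
\]
As $u\to0^+$ we have $\frac{\ln(1+u)}{u} = 1-\frac u2+\frac{u^2}{3}-\cdots$ and $\ln(1+\frac u2)-\ln(1+u) = -\frac u2+\frac{3u^2}{8}-\cdots$. Hence
\[
h(u) = \Bigl(\tfrac38-\tfrac13\Bigr)u^2 + O(u^3) = \tfrac{1}{24}u^2+O(u^3),
\]
so $h(0^+)=0$. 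It therefore suffices to show $h'(u)>0$ for $u>0$. A direct computation gives
\[
h'(u) = \frac{1}{2+u} - \frac{1}{1+u} - \frac{1}{u(1+u)} + \frac{\ln(1+u)}{u^2}.
\]
Multiplying by $u^2>0$, positivity is equivalent to
\[
\ln(1+u) > \frac{u}{1+u} + \frac{u^2}{(1+u)(2+u)} = \frac{u(2+u)+u^2}{(1+u)(2+u)} = \frac{2u(1+u)}{(1+u)(2+u)} = \frac{2u}{2+u}.
\]
This is the classical inequality $\ln(1+u) > \frac{2u}{2+u}$ for $u>0$. It follows because both sides vanish at $0$ and the derivative difference is
\[
\frac{1}{1+u}-\frac{4}{(2+u)^2} = \frac{u^2}{(1+u)(2+u)^2} > 0 .
\]
Once $h'>0$ is established, the first part follows. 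I expect the simplification of $h'$ to be the main obstacle, but it collapses neatly as shown. If the algebra misbehaves, the fallback is to expand $h$ as a power series in $u$ and bound the tail for $u\in(0,1]$, which corresponds to $d\in\mathbb{N}$.

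For the minimality of $\alpha=\frac12$, I will show that for every $\alpha<\frac12$, inequality \eqref{eq2: 04.04.26} fails for large $d$. Rewrite \eqref{eq2: 04.04.26} as
\[
e(d+\alpha) \geq (d+1)\Bigl(1+\frac1d\Bigr)^d .
\]
Using
\[
\Bigl(1+\frac1d\Bigr)^d = e\Bigl(1-\frac{1}{2d}+O(d^{-2})\Bigr),
\]
the right-hand side equals
\[
e\Bigl(d+1-\tfrac12+O(1/d)\Bigr) = e\Bigl(d+\tfrac12+O(1/d)\Bigr).
\]
The inequality would thus force $\alpha \geq \frac12 - O(1/d)$, and letting $d\to\infty$ gives $\alpha\ge\frac12$. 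Together with the first part, which shows that $\alpha=\frac12$ works (even strictly), this proves that $\frac12$ is the smallest such constant.
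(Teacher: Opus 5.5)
Your proof is correct and is essentially the paper's argument. Your $h(u)$ is exactly the paper's $\phi(x)=x\ln x-(x+1)\ln(x+1)+1+\ln\bigl(x+\tfrac12\bigr)$ evaluated at $x=1/u$, so showing that $h$ increases from $h(0^+)=0$ is the same as the paper's showing that $\phi$ decreases to $0$ at infinity; both reduce to $\ln(1+u)>\frac{2u}{2+u}$ via the derivative $\frac{u^2}{(1+u)(2+u)^2}$, and your minimality argument using $\bigl(1+\frac1d\bigr)^d=e\bigl(1-\frac{1}{2d}+O(d^{-2})\bigr)$ is identical to the paper's.
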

\begin{proof}
Define $\phi(x) := x \ln x - (x+1) \ln(x+1) + 1 + \ln\bigl(x+\tfrac12\bigr)$ for $x>0$.
Then for each $x>0$, the inequality $\phi(x)>0$ is equivalent to
$\frac{x^x}{(x+1)^{x+1}} > \frac{1}{e \bigl(x+\frac12 \bigr)}$.
Therefore, to prove \eqref{eq1: 04.04.26}, it suffices to show that $\phi(x) > 0$ for all $x > 0$. 
Differentiating and setting $t:= \frac1x > 0$, we obtain $\phi'(x) = -\ln(1+t) + \frac{2t}{2+t}$. 
To show that $\phi'(x)<0$, consider the auxiliary function $f(t) := \ln(1+t) - \frac{2t}{2+t}$ for all $t \geq 0$. 
Since $f(0) = 0$ and $f'(t) = \frac{t^2}{(1+t)(2+t)^2} > 0$ for all $t>0$, the function $f$ is strictly increasing 
on $[0, \infty)$. Hence $f(t)>0$ for all $t>0$, which proves that $\phi'(x) < 0$ for all $x>0$.
Therefore, $\phi$ is strictly decreasing on $(0,\infty)$, and
\begin{align*}
\lim_{x \to \infty} \phi(x) = \lim_{x \to \infty} \Biggl\{ \ln \Biggl(1 + \frac1{2x} \Biggr) - (x+1) \ln \Biggl(1+\frac1x \Biggr) + 1 \Biggr\} = 0,
\end{align*}
where the second limit holds by the standard expansion $\ln(1+u) = u + O(u^2)$.
Since $\phi$ is strictly decreasing and tends to $0$ at infinity, it follows that $\phi(x) > 0$ for all $x>0$. In particular,
inequality \eqref{eq1: 04.04.26} holds for all $d \in \mathbb{N}$.

\noindent
It remains to prove that the constant $\alpha = \frac12$ is the smallest value for which inequality \eqref{eq2: 04.04.26} holds.
Suppose that \eqref{eq2: 04.04.26} holds for some $\alpha>0$. Then,
\begin{align*}
\alpha \geq \frac{d+1}{e} \, \biggl(1+ \frac1d \biggr)^d - d, \quad \forall \, d \in \mathbb{N}.
\end{align*}
Using the asymptotic expansion as $d \to \infty$
\[
\biggl(1+\frac1d\biggr)^d = e \, \Biggl( 1-\frac{1}{2d} + O\biggl(\frac{1}{d^2}\biggr) \Biggr),
\]
it follows that $\alpha \geq \tfrac12$. Since \eqref{eq1: 04.04.26} shows that $\alpha=\tfrac12$ is admissible,
it is the smallest admissible value.
\end{proof}

\section{\texorpdfstring{Derivation of \eqref{eq: k_0}}
                     {Derivation of (k0)}}
\label{appendix: W}
\noindent
Let
\vspace*{-1.5em}
\begin{align}
\label{eq: a, C}
a:=-\ln(1-\varepsilon)>0, \qquad C:= \sqrt{\frac{3}{8 \pi}} \, e^3.
\end{align}
Then inequality \eqref{eq3: 12.03.26} can be rewritten as
\[
-\frac{2a}{3} \, k \; e^{-\frac{2a}{3} \, k}
\geq -\frac{2a}{3}\left(\frac{e^{-2a}}{C}\right)^{2/3}.
\]
Applying the branch $W_{-1}$ of the Lambert $W$ function to both sides, we obtain
\begin{align}
\label{eq1: 06.06.26}
k \geq -\frac{3}{2a}\,
W_{-1}\!\left(
-\frac{2a}{3}\left(\frac{e^{-2a}}{C}\right)^{2/3}
\right).
\end{align}
The branch $W_{-1}$ is used rather than the principal branch $W_0$
since the argument of the Lambert $W$ function is negative and tends
to $0$ as $\varepsilon \to 0$. The branch $W_0$ would yield a bounded
solution for $k$, whereas the desired solution corresponds to the
large-$k$ regime; this is captured by the branch $W_{-1}$, for which
$W_{-1}(x)\to -\infty$ as $x\to 0^{-}$. Substituting the values of $a$ 
and $C$ from \eqref{eq: a, C} into \eqref{eq1: 06.06.26} gives 
\begin{align}
\label{eq2: 06.06.26}
k \geq \left(\frac{3}{2\ln(1-\varepsilon)}\right) \;
W_{-1}\left(
\frac{4}{3e^2} \sqrt[3]{\frac{\pi}{3}} \,
\ln(1-\varepsilon) \; (1-\varepsilon)^{4/3} \right).
\end{align}
Combining inequality~\eqref{eq2: 06.06.26} with the requirement $k \geq 3$ in \eqref{eq1: 12.03.26}
finally gives, together with \eqref{eq: beta}, the valid selection of the integer $k_0$
in \eqref{eq: k_0}.

\subsection*{Use of Generative-AI tools declaration}
While writing this expository paper, the author used ChatGPT (OpenAI) solely for language editing 
and stylistic refinement of the manuscript. The author assumes full responsibility for its content.

\subsection*{Acknowledgments}
The author gratefully acknowledges the referees for their timely reports and constructive comments,
which helped improve the presentation of this paper, and Ugo Vaccaro for raising a question that led 
to Corollary~\ref{corollary: 1/2}.

\subsection*{Conflict of interest}
The author declares no conflicts of interest.

\end{document}